\def\cite{\citet}
\def\@noindentfalse{\global\let\if@noindent\iffalse}
\def\@noindenttrue{\global\let\if@noindent\iftrue}
\def\@aftertheorem{%
  \@noindenttrue
  \everypar{%
    \if@noindent%
      \@noindentfalse\clubpenalty\@M\setbox\z@\lastbox%
    \else%
      \clubpenalty \@clubpenalty\everypar{}%
    \fi}}
\newtheorem{thm}{Theorem}[section]
\newtheorem{lma}[thm]{Lemma}
\newtheorem{cor}[thm]{Corollary}
\theoremstyle{definition}
\newtheorem{defi}[thm]{Definition}
\newtheorem{re}[thm]{Remark}
\newtheorem{ex}[thm]{Example}
\let\original@left\left
\let\original@right\right
\renewcommand{\left}{\mathopen{}\mathclose\bgroup\original@left}
\renewcommand{\right}{\aftergroup\egroup\original@right}
\renewcommand\section{\@startsection{section}{1}{\z@}%
{-3.5ex \@plus -1ex \@minus -.2ex}%
{1.3ex \@plus.2ex}%
{\center\small\sc\MakeTextUppercase}}
\def\subsection#1{\@startsection{subsection}{2}{0pt}%
{-3.5ex \@plus -1ex \@minus -.2ex}%
{1ex \@plus.2ex}%
{\bf\mathversion{bold}}{#1}}
\def\subsubsection#1{\@startsection{subsubsection}{3}{0pt}%
{\medskipamount}%
{-10pt}%
{\normalsize\itshape}{\kern-2.2ex. #1.}}
\def\be#1{\begin{equation*}#1\end{equation*}}
\def\ben#1{\begin{equation}#1\end{equation}}
\def\bes#1{\begin{equation*}\begin{split}#1\end{split}\end{equation*}}
\def\besn#1{\begin{equation}\begin{split}#1\end{split}\end{equation}}
\def\bea#1{\begin{align*}#1\end{align*}}
\def\bean#1{\begin{align}#1\end{align}}
\def\bg#1{\begin{gather*}#1\end{gather*}}
\def\bml#1{\begin{multline*}#1\end{multline*}}
\def\note#1{\par\smallskip%
\noindent\kern-0.01\hsize%
\setlength\fboxrule{0pt}\fbox{\setlength\fboxrule{0.5pt}\fbox{%
\llap{$\boldsymbol\Longrightarrow$ }%
\vtop{\hsize=0.98\hsize\parindent=0cm\small\rm #1}%
\rlap{$\enskip\,\boldsymbol\Longleftarrow$}
}}%
}
\def\given{\mskip 0.5mu plus 0.25mu\vert\mskip 0.5mu plus 0.15mu}
\newcounter{bracketlevel}%
\def\@bracketfactory#1#2#3#4#5#6{%
\expandafter\def\csname#1\endcsname##1{%
\global\advance\c@bracketlevel 1\relax%
\global\expandafter\let\csname @middummy\alph{bracketlevel}\endcsname\given%
\global\def\given{\mskip#5\csname#4\endcsname\vert\mskip#6}\csname#4l\endcsname#2##1\csname#4r\endcsname#3%
\global\expandafter\let\expandafter\given\csname @middummy\alph{bracketlevel}\endcsname%
\global\advance\c@bracketlevel -1\relax%
}%
}
\def\bracketfactory#1#2#3{%
\@bracketfactory{#1}{#2}{#3}{relax}{0.5mu plus 0.25mu}{0.5mu plus 0.15mu}
\@bracketfactory{b#1}{#2}{#3}{big}{1mu plus 0.25mu minus 0.25mu}{0.6mu plus 0.15mu minus 0.15mu}
\@bracketfactory{bb#1}{#2}{#3}{Big}{2.4mu plus 0.8mu minus 0.8mu}{1.8mu plus 0.6mu minus 0.6mu}
\@bracketfactory{bbb#1}{#2}{#3}{bigg}{3.2mu plus 1mu minus 1mu}{2.4mu plus 0.75mu minus 0.75mu}
\@bracketfactory{bbbb#1}{#2}{#3}{Bigg}{4mu plus 1mu minus 1mu}{3mu plus 0.75mu minus 0.75mu}
}
\def\calF{\mathcal{F}}
\def\calG{\mathcal{G}}
\def\ahalf{{\textstyle\frac12}}
\def\eq#1{\eqref{#1}}
\def\Bi{\mathrm{Bi}}
\def\IP{\prob}
\renewcommand{\leq}{\leqslant}
\renewcommand{\geq}{\geqslant}
\renewcommand{\le}{\leq}
\renewcommand{\ge}{\geq}
\def\hKin{\hat{K}^{\mathrm{in}}}
\def\hKout{\hat{K}^{\mathrm{out}}}
\def\Kin{K^{\mathrm{in}}}
\def\Kout{K^{\mathrm{out}}}
\def\epsilon{\varepsilon}
\newcommand{\law}{\mathscr{L}}
\newcommand{\Pro}{\mathop{{}\mathbb{P}}}
\newcommand{\prob}{\Pro}
\renewcommand{\IP}{\Pro}
\newcommand{\E}{\mathop{{}\mathbb{E}}\mathopen{}}
\newcommand{\IE}{\E}
\newcommand{\mean}{\E}
\newcommand{\R}{\mathbb{R}}
\newcommand{\N}{\mathbb{N}}
\newcommand{\C}{\mathbb{C}}
\newcommand{\Var}{\mathop{\mathrm{Var}}\mathopen{}}
\newcommand{\var}{\Var}
\newcommand{\Cov}{\mathop{\mathrm{Cov}}\mathopen{}}
\newcommand{\cov}{\Cov}
\newcommand{\bigo}{\mathop{{}\mathrm{O}}\mathopen{}}
\newcommand{\dk}{d_{\mathrm{K}}}
\renewcommand{\min}{\mathop{\mathrm{min}}\mathopen{}}
\newcommand\bone{\mathop{{}\mathbf{1}}\mathopen{}}
\newcommand{\bmu}{{\bm\mu}}
\newcommand{\bx}{{\bm x}}
\newcommand{\bX}{{\bm X}}
\newcommand{\cF}{{\cal F}}
\newcommand{\cb}{{\cal B}}
\newcommand{\cc}{{\cal C}}
\newcommand{\cf}{{\cal F}}
\newcommand{\cl}{{\cal L}}
\newcommand{\cn}{{\cal N}}
\newcommand{\cs}{{\cal S}}
\def\t#1{^{(#1)}}
\def\ignore#1{}
\newcounter{con}
\numberwithin{equation}{section}
\title{\sc\bf\large\MakeUppercase{Palm theory, random measures and Stein~couplings}}
\author{\sc Louis H. Y. Chen\footnote{{Postal address: Department of Mathematics,
National University of Singapore,
Block S17, 10 Lower Kent Ridge Road,
Singapore 119076,
Republic of Singapore. Email address: matchyl@nus.edu.sg}} \\ \it National University of Singapore \\ \\ \sc Adrian R\"ollin\footnote{{Department of Statistics and Applied Probability,
National University of Singapore,
6 Science Drive 2,
Singapore 117546, Republic of Singapore. e-mail: adrian.roellin@nus.edu.sg 
}} \\ \it National University of Singapore\\  \\ \sc Aihua Xia\footnote{{Postal address:
School
of Mathematics and Statistics, the University of Melbourne,
VIC 3010, Australia. Email address: aihuaxia@unimelb.edu.au. }} \\ \it University of Melbourne}
\def\parsedate #1:20#2#3#4#5#6#7#8\empty{20#2#3-#4#5-#6#7}
\def\moddate{\expandafter\parsedate\pdffilemoddate{\jobname.tex}\empty}
\date{30 August 2020}
\begin{document}
\maketitle

\begin{abstract}\noindent We establish a general Berry-Esseen type bound which gives optimal bounds in many situations under suitable moment assumptions. By combining the general bound with Palm theory, we deduce a new error bound for assessing the accuracy of normal approximation to statistics arising from random measures, including stochastic geometry. We illustrate the use of the bound in four examples: completely random measures, excursion random measure of a locally dependent random process, and the total edge length of Ginibre-Voronoi tessellations and of Poisson-Voronoi tessellations. Moreover, we apply the general bound to Stein couplings and discuss the special cases of local dependence and additive functionals in occupancy problems.
\end{abstract}

\vskip12pt \noindent\textit{Key words and phrases\/}: Stein's method, normal approximation, Palm distribution, random measure, stochastic geometry, Stein coupling. 

\vskip12pt \noindent\textit{AMS 2020 Subject Classification\/}:
Primary 60F05; secondary 60G55, 60G57. 

\section{Introduction} 

The pioneering work of \cite{Stein72}, well-known as Stein's method, provides a set of tools to estimate the error in the approximation of the distributions of random variables by a specific distribution, and it has proven to be particularly powerful in the presence of dependence. Indeed, many forms of Stein's method have been developed to study a variety of random phenomena, and the comprehensive monographs by \cite{BHJ} and \cite{CGS2011} give accounts to that diversity.

It has become clear over the past decades that Stein's method is naturally related to size biasing and its point process counterpart Palm theory; see, for example, the results of \cite{GR96}, \cite{CX2004} and \cite{GX06}. While \cite{GR96} and \cite{GX06} considered size-bias couplings, \cite{CX2004} studied Poisson process approximation for point processes using Palm theory.  The work of \cite{CX2004} suggests that for normal approximation for statistics resulting from a random measure including those in stochastic geometry, it may be fruitful to combine Stein's method with Palm theory as well. Thus, in this article, we study normal approximation for statistics associated with random measures through their Palm distributions.

To this end, we first prove a general result, Theorem~\ref{thm1}, which can be thought of as an extension of Theorem~2.1 of \cite{CS2004} to settings that are not restricted to local dependence. We then connect our result with Palm theory in Section~\ref{sec2} to bound the errors of normal approximation for statistics arising from random measures. In order to illustrate the approach, we then estimate in Section~\ref{sec3} the errors in the normal approximation for completely random measures, the excursion random measure of a locally dependent random process and the total edge length of Ginibre-Voronoi tessellations as well as Poisson-Voronoi tessellations. The first three examples do not assume the Poisson process as an underlying point process. Theorem~\ref{thm1} can also be easily combined with Stein couplings, giving rise to Theorem~\ref{thm5} in Section~\ref{sec4}, with applications to local dependence and problems from random occupancy.

Our main theorems are formulated in such a way so as to give optimal rates of convergence in many applications. The cost we have to pay are higher moment requirements, but in many applications these are naturally satisfied. 

\section{A General Theorem}

Let~$W$ be such that~$\E W=0$ and~$\Var W=1$. Theorem~2.1 of \cite{CS2004} shows that if~$W$ is a sum of LD1 (see Section~\ref{sec5} for more details) locally dependent random variables then there exists a random function~$\hat{K}(t)$ such that 
\ben{\label{1}
\E\clc{W f(W)} = \E\int_{-\infty}^{\infty} f'(W+t)\hat{K}(t)dt
}
for all absolutely continuous functions~$f$ for which the expectations exist. A bound on the Kolmogorov distance~$\dk(\law(W), \mathcal{N}(0,1))$ is then obtained without further dependence assumption. A crucial step in the proof is the use of a concentration inequality (Proposition~3.1 of \cite{CS2004}) established under the LD1 local dependence. A careful examination of the proof of the proposition reveals that the concentration inequality actually holds if~$W$ only satisfies~\eq{1}, in which case the bound is expressed in terms of~$\hat{K}(t)$ instead of the locally dependent random variables. Consequently, Theorem~2.1 of \cite{CS2004}  holds for any~$W$ if~$\Var(W)=1$ and there exists a random function~$\hat{K}(t)$ such that~\eq{1} holds. 

It was observed by \cite{CR2010} that the proof of Theorem~2.1 of \cite{CS2004} can be simplified if the concentration inequality is replaced by a recursive inequality, which was inspired by \cite{Raic2003} and which is \eq{14} in this paper. Using this approach, they obtain a bound for~$W$ satisfying a Stein coupling assumption. In this paper we use the recursive inequality approach to obtain a simpler bound for~$W$ assuming that~$\Var W = 1$ and that~\eq{1} holds for some random function~$\hat{K}(t)$. As in the proof of Theorem~2.1 of \cite{CS2004}, Young's inequality (\eq{11} in this paper) is used to separate the product of two random variables. A crucial step in the proof of \cite{CS2004} is to use Young's inequality together with the concentration inequality, whereas in this paper, it is used together with the recursive inequality. Also in this paper, the random function $\hat{K}(t)$ is decomposed as $\hKin(t)+\hKout(t)$ to allow greater flexibility in applications. We now state and prove the general theorem.

\begin{thm}\label{thm1}
Let~$W$ be such that~$\E W = 0$ and~$\Var W  = 1$. Suppose there is a random function~$\hat{K}(t)$ such that~\eq{1} holds for all absolutely continuous functions~$f$ for which the expectations exist, and assume we can write~$\hat{K}(t) = \hKin(t)+\hKout(t)$, where~$\hKin(t) = 0$ for~$|t|> 1$. Define~$K(t) = \E\hat{K}(t)$, $\Kin(t) = \E\hKin(t)$, and~$\Kout(t) = \E\hKout(t)$. Then
\ben{ \label{2}
\dk(\law(W),\mathcal{N}(0,1)) \le 2r_1+11r_2+ 5r_3 + 10r_4 + 7r_5,
}
where
\bea{
r_1 &= {\E\left|\int_{|t|\le1}\bclr{\hKin(t)-\Kin(t)}dt\right|},
&r_2 &= \int_{|t|\le 1}|t\Kin(t)|dt, \\
r_3 &= \E\int_{-\infty}^\infty\babs{\hKout(t)}dt,
&r_4 &= \E\int_{|t|\le 1}\bclr{\hKin(t)-\Kin(t)}^2dt, \\
r_5 &= \left(\E\int_{|t|\le 1}|t|\bclr{\hKin(t)-\Kin(t)}^2dt\right)^{1/2}.                 
}
\end{thm}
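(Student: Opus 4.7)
My approach combines Stein's method for the Kolmogorov distance with the recursive-inequality technique of \cite{CR2010}, following the Chen-Shao argument but replacing the concentration step by a Raič-type recursion. For each $z\in\R$, let $f_z$ denote the bounded Stein solution to $f_z'(w) - wf_z(w) = \bone[w\le z] - \Phi(z)$, which enjoys the standard bounds $\|f_z\|_\infty\le\sqrt{2\pi}/4$ and $\|f_z'\|_\infty\le 1$, as well as a Lipschitz-type estimate on $w f_z(w)$. Setting $H(z) := \Pro(W\le z) - \Phi(z)$ and $\delta := \sup_z |H(z)|$, the Stein identity gives $H(z) = \E[f_z'(W) - Wf_z(W)]$. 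Taking $f(w)=w$ in \eqref{1} together with $\Var W = 1$ yields $\int K(t)\,dt = 1$; substituting \eqref{1} for $\E[Wf_z(W)]$ and inserting $\pm \int K(t)\,\E f_z'(W+t)\,dt$, I rewrite
\begin{equation*}
H(z) = \int K(t)\,\E[f_z'(W) - f_z'(W+t)]\,dt + \E\int f_z'(W+t)\bclr{K(t) - \hat K(t)}\,dt,
\end{equation*}
and then decompose each integrand via $K = K^{\rm in}+K^{\rm out}$ and $\hat K = \hat K^{\rm in}+\hat K^{\rm out}$.

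The outer $(|t|>1)$ contributions are bounded using only $\|f_z'\|_\infty\le 1$, producing a multiple of $r_3$. For the inner piece of the first integral, I split $f_z'(W)-f_z'(W+t)$ via the Stein equation itself into a smooth part $Wf_z(W) - (W+t)f_z(W+t)$, which is $\bigo(|t|)$ and contributes to $r_2$ after integration against $K^{\rm in}$, and an indicator jump $\bone[W\le z]-\bone[W+t\le z]$ whose expectation equals $H(z)-H(z-t)+\Phi(z)-\Phi(z-t)$; this gives an extra $r_2$-type piece (from the Gaussian smoothing) and a $\delta$-piece that seeds the recursion.

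The delicate step is the inner piece of the second integral, where the zero-mean quantity $\Delta(t):=\hat K^{\rm in}(t)-K^{\rm in}(t)$ must be uncoupled from $f_z'(W+t)$. Splitting $f_z'(W+t) = f_z'(W) + [f_z'(W+t)-f_z'(W)]$, the first summand yields $\E\bcls{f_z'(W)\int_{|t|\le 1}\Delta(t)\,dt}$, bounded by $r_1$. For the second summand I would apply Young's inequality $ab\le a^2/(2\epsilon)+\epsilon b^2/2$ to separate $\Delta(t)$ from $f_z'(W+t)-f_z'(W)$: the Lipschitz portion of the latter produces $r_4$ and, via the $|t|$-weighting combined with Cauchy-Schwarz, $r_5$; the indicator-jump portion injects additional $\delta$ through $\Pro(z\wedge(z-t)\le W\le z\vee(z-t))\le |t|/\sqrt{2\pi}+2\delta$. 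Assembling everything yields a uniform bound $|H(z)|\le C_1 r_1 + \cdots + C_5 r_5 + c\delta$ with $c<1$ after tuning $\epsilon$; taking $\sup_z$ and rearranging produces \eqref{2}. The main obstacle is this Young-inequality step: the $|t|$-weight defining $r_5$ is precisely what balances the Lipschitz growth against the $\hat K^{\rm in}$-fluctuations, and the hypothesis $\hat K^{\rm in}(t)=0$ for $|t|>1$ is essential both for integrability and for keeping the recursion coefficient $c$ strictly below $1$.
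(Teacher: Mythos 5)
Your overall architecture matches the paper's: Stein's method for the Kolmogorov distance, the decomposition of $\E h(W)-\E h(Z)$ into pieces involving $\Kin,\Kout,\hKin,\hKout$, Young's inequality to decouple the zero-mean fluctuation $\hKin-\Kin$ from the Stein solution increments, the anti-concentration estimate, and a recursive inequality in the Kolmogorov distance. However, you work with the sharp indicator $\bone[w\le z]$ and its Stein solution $f_z$, whereas the paper uses the smoothed indicator $h_{x,\epsilon}$ and its solution $f_{x,\epsilon}$, and this difference produces a genuine gap in the term you describe as the ``inner piece of the first integral'' --- the paper's $R_5=\E\int_{|t|\le1}(f'(W)-f'(W+t))\Kin(t)\,dt$.

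In your sharp version, via the Stein equation the indicator jump in $f_z'(W)-f_z'(W+t)$ is $\bone[W\le z]-\bone[W+t\le z]$, an interval of width $|t|$, whose probability you bound by $2\delta+|t|/\sqrt{2\pi}$; integrating against $|\Kin(t)|$ then gives a contribution of the form $2\delta\int_{|t|\le1}|\Kin(t)|\,dt + (\text{const})\,r_2$. But $\int_{|t|\le1}|\Kin(t)|\,dt$ is \emph{not} controlled by any of $r_1,\dots,r_5$: the hypotheses give $\int K(t)\,dt=1$ and $r_2=\int|t\Kin(t)|\,dt<\infty$, which permit $\Kin(t)\sim 1/|t|$ near $0$, making $\int|\Kin(t)|\,dt$ infinite. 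So the ``$\delta$-piece that seeds the recursion'' carries an uncontrolled coefficient, and the recursion does not close. The paper's smoothing is precisely what repairs this: the bound \eqref{5} for the \emph{smoothed} solution has the form $\frac1\epsilon\int_{t\wedge0}^{t\vee0}\bone[x\le w+u\le x+\epsilon]\,du$, and the inner $u$-integral over an interval of width $|t|$ is what manufactures the extra factor of $|t|$. After taking expectation and using the concentration inequality, the $R_5$ contribution becomes $\frac{2d+0.4\epsilon}{\epsilon}\,r_2$, i.e.\ a multiple of $r_2$ rather than $d\int|\Kin|$, and with $\epsilon\propto d$ the $d$-dependence cancels. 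Without this device your argument stalls exactly at the $\Kin$-weighted indicator term. (Note that for the $R_4$-type term $\E\int(f'(W)-f'(W+t))(\hKin(t)-\Kin(t))\,dt$ your plan is fine: Young's inequality introduces a square of $\hKin-\Kin$, and the weight is chosen so that the indicator probability $2d+0.4|t|+0.4\epsilon$ is \emph{divided out}, not multiplied by $|\Kin|$ --- there the sharp indicator would also work. The obstruction is specific to $R_5$, where $\Kin$ is deterministic and no Young-type decoupling is available.)

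Two further, more minor, inaccuracies: the smooth part $Wf_z(W)-(W+t)f_z(W+t)$ is $\bigo((|W|+1)|t|)$, not uniformly $\bigo(|t|)$; this is harmless after taking expectations since $\E(|W|+1)\le 2$, but you should say so. And in the paper's bookkeeping the Lipschitz portion of $R_4$ feeds $r_5$ while the indicator portion feeds both $r_4$ and $r_5$ plus the $d$-recursion terms; your sketch attributes $r_4$ to the Lipschitz part, which is slightly off.
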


\begin{proof} 
From~\eq{1}, by letting~$f(w)=w$, we obtain~$\int_{-\infty}^{\infty}K(t)dt = 1$.
For~$x \in \R$ and~$\epsilon >0$, define
\be{
	h_{x,\epsilon}(w)\coloneqq \begin{cases}
		1&\text{if~$w\le x$,}\\
		0&\text{if~$w\ge x+\epsilon$,}\\
			1+\epsilon^{-1}(x-w)&\text{if~$x<w<x+\epsilon$.}
	\end{cases}
}
Let~$f_{x,\epsilon}$ be the bounded solution of the Stein equation
\ben{ \label{3}
	f_{x,\epsilon}'(w)-wf_{x,\epsilon}(w)=h_{x,\epsilon}(w)-\E h_{x,\epsilon}(Z),
}
where~$Z \sim \mathcal{N}(0,1)$. The bounded solution~$f_{x,\epsilon}$ of~\eq{3} is unique and is given by
\be{ 
	f_{x,\epsilon}(w) = - e^{\frac{1}{2}w^2}\int_w^\infty e^{-\frac{1}{2}	t^2}[h_{x,\epsilon}(t)-\E h_{x,\epsilon}(Z)]dt
}
(see \cite[p.~15]{CGS2011}). We have for all~$w, v \in \R$,
\ben{\label{4}
	0\le f_{x,\epsilon}(w) \le 1,  
	\qquad 
	|f'_{x,\epsilon}(w)|\le1,
	\qquad  
	|f'_{x,\epsilon}(w) - f'_{x,\epsilon}(v)| \le 1
}
and
\besn{ \label{5}
	|f'_{x,\epsilon}(w+t)-f'_{x,\epsilon}(w)| 
	&\le (|w|+1)|t|+ \frac{1}{\epsilon}\int_{t\wedge 0}^{t \vee 0}\bone[x \le w+u\le x+\epsilon]du \\ 	&\le (|w|+1)|t| +  \bone[x - 0\vee t \le w \le x - 0\wedge t + \epsilon].  
}
The bounds~\eq{4} and~\eq{5} were obtained by \cite[p.~2010]{CS2004}. Bounds for all cases of~$h$ and their proofs were given by \cite[Section~2.2]{CGS2011}. Now write
\besn{ \label{6} 
&\E h_{x,\epsilon}(W) - \E h_{x,\epsilon}(Z) \\ 
& \qquad = \E\int_{-\infty}^{\infty}f'_{x,\epsilon}(W)K(t)dt - \E\int_{-\infty}^{\infty}f'_{x,\epsilon}(W+t)\hat{K}(t)dt \\ 
& \qquad = \E\int_{|t|\le 1}f'_{x,\epsilon}(W)\bclr{\Kin(t)-\hKin(t)}dt \\ 
& \qquad \quad + \E\int_{-\infty}^\infty f'_{x,\epsilon}(W)\bclr{\Kout(t)-\hKout(t)}dt \\ 
& \qquad \quad + \E\int_{-\infty}^\infty\bclr{f'_{x,\epsilon}(W) - f'_{x,\epsilon}(W+t)}\hKout(t)dt \\ 
& \qquad \quad + \E\int_{|t|\le 1}\bclr{f'_{x,\epsilon}(W) - f'_{x,\epsilon}(W+t)}\bclr{\hKin(t) - \Kin(t)}dt \\ 
& \qquad \quad + \E\int_{|t| \le 1}\bclr{f'_{x,\epsilon}(W) - f'_{x,\epsilon}(W+t)}\Kin(t)dt \\
& \qquad \eqqcolon  R_1 + R_2 + R_3 + R_4 + R_5.
}
By~\eq{4}, we obtain bounds
\besn{ \label{7} 
|R_1| &= \left|\E\bbbclc{ f'_{x,\epsilon}(W)\int_{|t|\le 1}\bclr{\hKin(t)-\Kin(t)}dt}\right| \\ 
&\le \E\left|\int_{|t|\le 1}\bclr{\hKin(t)-\Kin(t)}dt\right| = r_1,
} 
\besn{\label{8} 
|R_2| &= \left|\E \bbbclc{f'_{x,\epsilon}(W)\int_{-\infty}^\infty\bclr{\Kout(t)-\hKout(t)}dt }\right| \\ 
&\le \E\int_{-\infty}^\infty \babs{\Kout(t)}dt + \E\int_{-\infty}^\infty|\hKout(t)|dt \\
&\le 2\E\int_{-\infty}^\infty\babs{\hKout(t)}dt = 2r_3,
}
and
\besn{ \label{9} 
|R_3|\le \E\int_{-\infty}^\infty\babs{\hKout(t)}dt = r_3.
}
By~\eq{5},
\besn{ \label{10} 
	|R_4| &\le \E\bbbclc{(|W|+1)\int_{|t|\le 1}|t|\babs{\hKin(t)-\Kin(t)}dt} \\ 
	&\quad + \E\int_{|t|\le 1}\bone[x - 0\vee t \le W \le x - 0 \wedge t + \epsilon]\babs{\hKin(t)-\Kin(t)}dt \qquad \\
	&\eqqcolon  R_{4,1} + R_{4,2}.
}
Recall Young's inequality: For~$a, b, c >0$, we have 
\ben{ \label{11}
ab \le \frac{ca^2}{2} + \frac{b^2}{2c}.
}
Using this inequality with~$c=\alpha>0$, $a=(|W|+1)\sqrt{|t|}$ and~$b=\sqrt{|t|}|\hKin(t)-\Kin(t)|$, we have
\bes{  
R_{4,1} &\le \frac{\alpha}{2}\E\int_{|t|\le 1}(|W|+1)^2|t|dt + \frac{1}{{2}\alpha}\E\int_{|t|\le 1}|t|\bclr{\hKin(t)-\Kin(t)}^2dt \\ 
&\le 2\alpha + \frac{1}{{2}\alpha}{\E}\int_{|t|\le 1}|t|\bclr{\hKin(t)-\Kin(t)}^2dt 
= 2\alpha + \frac{r_5^2}{2\alpha}.
}
By letting~$\alpha = r_5$,
\ben{\label{12}
R_{4,1} \le 2.5r_5.
}
Using the inequality~\eq{11} again, but with~$c=(2d+0.4|t|+0.4\epsilon)/(\theta\beta)$  for~$\theta, \beta > 0$,
$b=\bone[x-0\vee t \le W \le x - 0\wedge t + \epsilon]$ and~$a=|\hKin(t)-\Kin(t)|$, we obtain
\besn{\label{13} 
R_{4,2} &\le \frac{\theta\beta}{2}\E\int_{|t|\le 1}(2d+0.4|t|+0.4\epsilon)^{-1}\bone[x-0\vee t \le W \le x - 0\wedge t + \epsilon]dt \\ 
&\quad + \frac{1}{2\theta\beta}\E\int_{|t|\le 1}(2d+0.4|t|+0.4\epsilon)\bclr{\hKin(t)-\Kin(t)}^2dt.
}
Let
\be{
	d = \dk(\law(W),\mathcal{N}(0,1)), 
	\qquad 
	d_{\epsilon} = \sup_{x\in \R}|\E h_{x,\epsilon}(W) - h_{x,\epsilon}(Z)|;
}
then it is not difficult to see that, for~$a \le b$,
\ben{ \label{14}
  \prob[a \le W \le b]\leq 2d+ \frac{1}{\sqrt{2\pi}}(b-a)\le 2d+0.4(b-a),
  \qquad
  d \le d_\epsilon + 0.4\epsilon.
}
By~\eq{14},
\be{
\prob[x-0\vee t \le W \le x - 0\wedge t + \epsilon]\le 2d+0.4|t|+0.4\epsilon.
}
Using this, \eq{13} yields
\bes{  
R_{4,2}&\le \frac{\theta\beta}{2}\int_{|t|\le 1}dt + \frac{2d+0.4\epsilon}{2\theta\beta}\E\int_{|t|\le 1}\bclr{\hKin(t)-\Kin(t)}^2dt \\ 
&\quad + \frac{0.4}{2\theta\beta}\E\int_{|t|\le 1}|t|\bclr{\hKin(t)-\Kin(t)}^2dt \\ 
&= \theta\beta + \frac{d+0.2\epsilon}{\theta\beta}r_4 + \frac{0.2}{\theta\beta}r_5^2.
}
By letting~$\beta = d + 0.2\epsilon + r_5$, we obtain
\besn{ \label{16}
R_{4,2} \le \theta(d + 0.2\epsilon + r_5) + \frac{1}{\theta}r_4 + \frac{0.2}{\theta}r_5 
 = \theta d + 0.2\theta\epsilon + \frac{1}{\theta}r_4 + \left(\theta + \frac{0.2}{\theta}\right)r_5.
}
By~\eq{5} again, we have
\besn{ \label{17}
	|R_5| &\le \E\int_{|t|\le 1}(|W|+1)|t\Kin(t)|dt \\ 
	&\quad + \frac{1}{\epsilon}\E\int_{|t| \le 1}\int_{0\wedge t}^{0\vee t}\bone[x \le W + u\le x + \epsilon]|\Kin(t)|dudt \\ 
	&\le 2\int_{|t|\le 1}|t\Kin(t)|dt + \frac{1}{\epsilon}\int_{|t|\le 1}\int_{0\wedge t}^{0\vee t}\prob[x \le W+u \le x + \epsilon]|\Kin(t)|dudt \\ 
	&\le 2\int_{|t|\le 1}|t\Kin(t)|dt + \frac{1}{\epsilon}\int_{|t|\le 1}(2d + 0.4\epsilon)|t\Kin(t)|dt \\
	&= 2r_2 + \frac{2d+0.4\epsilon}{\epsilon}r_2.
}
Letting~$\epsilon = \frac{1}{2}d$ and combining~\eq{6}, \eq{7}, \eq{8}, \eq{9}, \eq{10}, \eq{12}, \eq{16} and~\eq{17}, we obtain
\be{
d_\epsilon \le 1.1\theta d + r_1 + 6.4r_2 + {3}r_3 + \frac{1}{\theta}r_4 + \left(2.5 + {\theta} + \frac{0.2}{\theta}\right)r_5.
}
This, together with~\eq{14}, yields
\be{
 d \le 1.1\theta d + 0.2d + r_1 + 6.4r_2 + {3}r_3 + \frac{1}{\theta}r_4 + \left(2.5 + {\theta} + \frac{0.2}{\theta}\right)r_5,
}
which implies
\be{
d \le (0.8-1.1\theta)^{-1}\left\{ r_1 + 6.4r_2 + 3r_3 + \frac{1}{\theta}r_4 + \left(2.5 + \theta + \frac{0.2}{\theta}\right)r_5\right\}.
}
Letting $\theta = 0.18$, we obtain
\be{ 
d \le 2r_1 + 11r_2 + 5r_3 + 10r_4 + 7r_5,
}
 and this proves Theorem~\ref{thm1}.
\end{proof}

\begin{re} We have introduced~$\hKin$ and~$\hKout$ mainly to allow for truncation. Since we have kept the theorem general, different types of truncation are possible, and we will show this in various applications in this article.  
\end{re}

\begin{ex}
We will check the optimality of the bounds in Theorem~\ref{thm1} by taking~$W$ as a sum of independent random variables. Let~$\xi_1,\cdots, \xi_n$ be independent with~$\E\xi_i= 0$ and~$\Var(\xi_i)=\sigma_i^2$, $i=1,\cdots,n$. Define~$B^2=\sum_{i=1}^n\sigma_i^2$, $X_i=\xi_i/B$, $i=1,\cdots,n$, and~$W=\sum_{i=1}^n X_i$. Then~$\E W=0$ and~$\Var(W)=1$ and~$W$ satisfies the Stein identity~\eq{1} with
\be{
\hat{K}(t) = \sum_{i=1}^n X_i\bclr{\bone[-X_i < t \le 0]-\bone[-X_i > t >0]}.
}
Define
\bea{
	\hKin(t)&= \sum_{i=1}^n X_i\bone[|X_i|\le 1]\bclr{\bone[-X_i < t \le 0]-\bone[-X_i > t >0]},\\
\hKout(t)&= \sum_{i=1}^n X_i\bone[|X_i| > 1]\bclr{\bone[-X_i < t \le 0]-\bone[-X_i > t >0]}.
}
Clearly~$\hat{K}(t)=\hKin(t)+ \hKout(t)$ and~$\hKin(t)=0$ for~$|t|>1$. Straightforward calculations yield
\bea{
r_1 &\le\left(\sum_{i=1}^n \var(X_i^2\bone[|X_i|\le 1])\right)^{1/2} \le \frac{\sqrt{\strut\sum_{i=1}^n \E\clc{\xi_i^4 \bone[|\xi_i|\le B]}}}{B^2};\\
r_2 &\le {\frac12}\sum_{i=1}^n \E\bclc{|X_i|^3\bone[|X_i| \le 1]} = \frac{\sum_{i=1}^n \E\clc{|\xi_i|^3\bone[|\xi_i|\le B]}}{{2}B^3};\\
r_3 &\le \sum_{i=1}^n \E\bclc{X_i^2\bone[|X_i|>1]}= \frac{\sum_{i=1}^n \E\clc{\xi_i^2\bone[|\xi_i|>B]}}{B^2}; \\
r_4 &\le \sum_{i=1}^n \E\bclc{|X_i|^3\bone[|X_i| \le 1]} = \frac{\sum_{i=1}^n \E\clc{|\xi_i|^3\bone[|\xi_i|\le B]}}{B^3};\\
r_5 &\le {\frac1{\sqrt{2}}}\left(\sum_{i=1}^n \E\bclc{X_i^4\bone[|X_i|\le 1]}\right)^{1/2} = \frac{\sqrt{ \strut\sum_{i=1}^n\E\clc{\xi_i^4 \bone[|\xi_i|\le B]}}}{{\sqrt{2}}B^2}.}
Assuming that~$\E|\xi_i|^3 < \infty$ for~$i=1,\dots,n$, we obtain
\ben{\label{18}
\dk(\mathcal{L}(W),\mathcal{N}(0,1))\le \frac{{7}\sqrt{\strut\sum_{i=1}^n \E\clc{\xi_i^4 \bone[|\xi_i|\le B]}}}{B^2}+ \frac{{15.5}\sum_{i=1}^n \E|\xi_i|^3}{B^3}.
}
If both~$\sum_{i=1}^n \E\clc{\xi_i^4 \bone[|\xi_i|\le B]}$ and~$\sum_{i=1}^n \E|\xi_i|^3$ are~$\bigo(B^2)$, such as in the i.i.d. case, then the bound in~\eq{18} is~$\bigo(B^{-1})$, which agrees with the order of the Berry-Esseen bound~$C\sum_{i=1}^n \E|\xi_i|^3/B^3$. We expect that in most applications, the bound on the Kolmogorov distance in~\eq{2} should give the optimal or near optimal order. In particular, for integer-valued random variables, the bounds can never be better than the scaling factor (see the general argument of \cite{Englund81}), so that, for instance, the bounds in Corollary \ref{cor10} on occupancy problems are optimal whenever the additive functional is integer-valued.
\end{ex}

\section{Random Measures}\label{sec2}

Let~$\Gamma$ be a locally compact separable metric space. Let~$\Xi$ be a random measure on~$\Gamma$ with finite intensity measure~$\Lambda$, and let~$\Xi_\alpha$ be the Palm measure associated with~$\Xi$ at~$\alpha \in \Gamma$ (see \cite[pp.~83, 103]{Kallenberg83}). We have 
\ben{\label{19}
\E\left\{\int_\Gamma f(\alpha,\Xi)\Xi(d\alpha)\right\}= \E\left\{\int_{\Gamma}f(\alpha,\Xi_\alpha)\Lambda(d\alpha)\right\}
}
for real-valued functions~$f(\cdot,\cdot)$ for which the expectations exist (see \cite[p.~84]{Kallenberg83}). If~$\Xi$ is a simple point process, the distribution of~$\Xi_\alpha$ can be interpreted as the conditional distribution of~$\Xi$ given that a point of~$\Xi$ at~$\alpha$ has occurred. On the other hand, if~$\Lambda(\{\alpha\})>0$, then~$\Xi(\{\alpha\})$ is a non-negative random variable with positive mean and~$\Xi_\alpha(\{\alpha\})$ is a~$\Xi(\{\alpha\})$-size-biased random variable. Therefore, in general, we may interpret the Palm measure as a ``size-biased random measure''.
For the special case where~$f$ is absolutely continuous from~$\R$ to~$\R$, we obtain
\ben{\label{20}
\E\clc{|\Xi|f(|\Xi|)}= \E\int_\Gamma f(|\Xi_\alpha|)\Lambda(d\alpha),
} 
provided the expectations and integral exist, where~$|\Xi|=\Xi(\Gamma)$.
Let~$\lambda = \Lambda(\Gamma) = \E|\Xi|$, $B^2 = \Var(|\Xi|)$ and define
\ben{ \label{21}
	W = \frac{|\Xi|-\lambda}{B},
	\qquad
 	W_\alpha = \frac{|\Xi_\alpha|- \lambda}{B}. 
}
Assume that~$\Xi$ and~$\Xi_\alpha$, $\alpha \in \Gamma$, are defined on the same probability space, and define
\be{
	\Delta_\alpha = W_\alpha - W,
	\qquad
	Y_\alpha = |\Xi_\alpha|-|\Xi|.
} 
From~\eq{20}, we have
\besn{  \label{22} 
&\E\clc{Wf(W)}\\
&\qquad= \frac{1}{B}\E\int_{\Gamma}(f(W_\alpha)-f(W))\Lambda(d\alpha)\\ 
&\qquad= \frac{1}{B}\E\int_{\Gamma}\int_0^{\Delta_\alpha}f'(W+t)dt\Lambda(d\alpha) \\ 
&\qquad= \frac{1}{B}\E\int_\Gamma\int_{-\infty}^{\infty}f'(W+t)\bclr{\bone[\Delta_\alpha > t >0] - \bone[\Delta_\alpha < t \le 0]}dt\Lambda(d\alpha) \\  
&\qquad= \E\int_{-\infty}^{\infty}f'(W+t)\hat{K}(t)dt,
}
where
\ben{ \label{23}
\hat{K}(t) = \frac{1}{B}\int_{\Gamma}\bclr{\bone[\Delta_\alpha > t >0] - \bone[\Delta_\alpha < t \le 0]}\Lambda(d\alpha).
}
We now apply Theorem~\ref{thm1} to~\eq{22} to obtain the following theorem.
\begin{thm} \label{thm2}
Let\/ $W$ and\/ $W_\alpha$, $\alpha \in \Gamma$, be as defined in~\eq{21}, and assume that\/ $\Xi$ and\/ $\Xi_\alpha$ are defined on the same probability space. Define~$\hat{K}(t)$ as in~\eq{23}, and let
\bea{
\hKin(t) &= \frac{1}{B}\int_{\Gamma}\bclr{\bone[\Delta_\alpha > t >0] - \bone[\Delta_\alpha < t \le 0]}\bone[|\Delta_\alpha|\le 1]\Lambda(d\alpha),\\
\hKout(t) &= \frac{1}{B}\int_{\Gamma}\bclr{\bone[\Delta_\alpha > t >0] - \bone[\Delta_\alpha < t \le 0]}\bone[|\Delta_\alpha| > 1]\Lambda(d\alpha).
}
Moreover, let
\be{
	K(t)=\E\hat{K}(t), 
	\qquad 
	\Kin(t)=\E\hKin(t), 
	\qquad
	\Kout(t)=\E\hKout(t).
}
Then
\be{
\dk(\law(W),\mathcal{N}(0,1)) \le 2r_1'+ 5.5r_2'+ 5r_3'+ 10r_4' + 7r_5',
}
where $r_1'$, $r_2'$, $r_3'$, $r_4'$ and~$r_5'$ are given by~\eq{24}, \eq{25}, \eq{26}, \eq{27} and~\eq{28} respectively.
\end{thm}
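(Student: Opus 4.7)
The statement is a direct corollary of Theorem~\ref{thm1} applied to the Stein identity~\eq{22}, followed by a rewriting of each $r_i$ of Theorem~\ref{thm1} as a Palm-theoretic functional. First I would check that the hypotheses of Theorem~\ref{thm1} are all satisfied. The centering and normalization $\E W=0$, $\Var W=1$ hold by construction of $W$ in~\eq{21}; the identity~\eq{1} with $\hat K(t)$ as in~\eq{23} is exactly~\eq{22}; the decomposition $\hat K=\hKin+\hKout$ follows from $\bone[|\Delta_\alpha|\le 1]+\bone[|\Delta_\alpha|>1]\equiv 1$; and for $|t|>1$ the integrand defining $\hKin(t)$ vanishes because $\bone[|\Delta_\alpha|\le 1]$ forces $|\Delta_\alpha|\le 1<|t|$, incompatible with either $\Delta_\alpha>t>0$ or $\Delta_\alpha<t\le 0$. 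Theorem~\ref{thm1} therefore yields the raw bound $2r_1+11r_2+5r_3+10r_4+7r_5$.

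The second step is to rewrite each $r_i$ as an integral over $\Gamma$ of a moment-like quantity of $\Delta_\alpha$, using Fubini to exchange $dt$ with $\Lambda(d\alpha)$. For $r_2$ and $r_3$ this is a direct computation: for each fixed $\alpha$ the integrand of $\hKin(t)$ has a definite sign in $t$, so $|t\Kin(t)|$ collapses to $t\,\Kin(t)\cdot\mathrm{sgn}(t)$ and the inner $dt$-integral is evaluated in closed form. I expect this to produce, up to absorbing a sign,
\be{
r_2 \;=\; \frac{1}{2B}\int_\Gamma \E\bclc{\Delta_\alpha^{2}\bone[|\Delta_\alpha|\le 1]}\Lambda(d\alpha),
\qquad
r_3 \;=\; \frac{1}{B}\int_\Gamma \E\bclc{|\Delta_\alpha|\bone[|\Delta_\alpha|>1]}\Lambda(d\alpha).
}
Identifying $r_2'$ with $2r_2$ and $r_3'$ with $r_3$ (the natural expressions for the random-measure setting) converts $11r_2$ into $11/2=5.5$ times $r_2'$ and $5r_3$ into $5r_3'$, which matches the factors in the stated bound.

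For the variance-type terms $r_1$, $r_4$, $r_5$, the random function $\hKin(t)-\Kin(t)$ is centred, and the integrands in each $r_i$ are either itself or its square against $dt$. Fubini turns the squares into double integrals over $\Gamma\times\Gamma$ against $\Lambda^{\otimes 2}$, whose integrands are covariance-type functionals of the paired indicator events in $(\Delta_\alpha,\Delta_\beta)$. Reading off the corresponding Palm-theoretic expressions gives the definitions of $r_1'$, $r_4'$, $r_5'$ to be introduced at~\eq{24}, \eq{27}, \eq{28}. The step that I anticipate to be the most delicate is not any conceptual one but the bookkeeping for these three quantities: one must carefully split the $dt$-integration according to the signs of $t$, $\Delta_\alpha$ and (for the double integrals) $\Delta_\beta$, and track the signs of the products of the two indicator differences. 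None of these manipulations introduces new constants, so the coefficients $2$, $10$, $7$ pass unchanged from Theorem~\ref{thm1} to the final bound.
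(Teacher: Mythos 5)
Your proposal matches the paper's proof essentially step by step: apply Theorem~\ref{thm1} with the stated decomposition $\hat K=\hKin+\hKout$, verify $\hKin(t)=0$ for $|t|>1$ from the indicator truncation, and then compute each $r_i$ via Fubini, reading off the Palm-theoretic expressions $r_i'$. The one normalization subtlety — that $r_2'=2r_2$ while $r_1'=r_1$, $r_3'=r_3$, $r_4'=r_4$, $r_5'=r_5$, which is exactly what converts the coefficient $11$ on $r_2$ into the $5.5$ on $r_2'$ — you identify correctly, so the argument is sound and no different in route from the paper's.
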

\begin{proof}
The proof of this theorem is reduced to calculating the error terms in Theorem~\ref{thm1}, which yields
\bean{ 
\begin{split}\label{24} 
r_1'\coloneqq{}& r_1 \\ 
={}& \frac{1}{B}{\mean\left|\int_\Gamma\bclr{\Delta_\alpha \bone[|\Delta_\alpha|\le 1]-\mean\{\Delta_\alpha \bone[|\Delta_\alpha|\le 1]\}}\Lambda(d\alpha)\right|}\\
={}& \frac{1}{B^2}{\mean\left|\int_\Gamma\bclr{Y_\alpha \bone[|Y_\alpha|\le B]-\mean\{Y_\alpha \bone[|Y_\alpha|\le B]\}}\Lambda(d\alpha)\right|};
\end{split}\\
\begin{split} \label{25}  
 2r_2 ={}&  \frac{1}{B}\int_\Gamma\E\bclc{\Delta_\alpha^2\bone[|\Delta_\alpha|\le 1]}\Lambda(d\alpha) \\
	={}& \frac{1}{B^3}\int_\Gamma\E\bclc{Y_\alpha^2\bone[|Y_\alpha|\le B]}\Lambda(d\alpha){\eqqcolon r_2'}; 
\end{split}\\
\begin{split} \label{26} 
 r_3 ={}& \frac{1}{B}\int_\Gamma \E\clc{|\Delta_\alpha|\bone[|\Delta_\alpha| > 1]}\Lambda(d\alpha) \\ 
={}& \frac{1}{B^2}\int_\Gamma \E\clc{|Y_\alpha|\bone[|Y_\alpha|>B]}\Lambda(d\alpha)\eqqcolon r_3';
\end{split}\\
\begin{split} \label{27} 
r_4'\coloneqq{}&  r_4 \\  
={}&\frac{1}{B^2}\int_{|t|\le 1}\int_\Gamma\int_\Gamma\Cov\bigl(\bone[1 \ge \Delta_\alpha > t >0] - \bone[-1 \le \Delta_\alpha < t \le 0],\\ 
&\kern10.5em \bone[1 \ge \Delta_\beta > t >0] - \bone[-1 \le\Delta_\beta < t \le 0]\bigr)\\
&\kern0.6\textwidth\times\Lambda(d\alpha)\Lambda(d\beta)dt \\ 
={}& \frac{1}{B^2}\int_0^1\int_\Gamma\int_\Gamma\Cov\bigl(\bone[1\ge\Delta_\alpha >t>0],\bone[1 \ge \Delta_\beta > t>0]\bigr)\\ 
& \kern0.6\textwidth  \times \Lambda(d\alpha)\Lambda(d\beta)dt \\  
& \quad + \frac{1}{B^2}\int_{-1}^0\int_\Gamma\int_\Gamma\Cov\bigl(\bone[-1\le\Delta_\alpha < t< 0],\bone[-1 \le \Delta_\beta < t<0]\bigr)\\ 
& \kern0.6\textwidth \times \Lambda(d\alpha)\Lambda(d\beta)dt;
\end{split}\\
\begin{split} \label{28} 
r_5'\coloneqq{}&  r_5 \\  
={}& \frac{1}{B}\bbbclr{\int_0^1\int_\Gamma\int_\Gamma t\Cov\bigl(\bone[1\ge\Delta_\alpha >t>0],\bone[1 \ge \Delta_\beta > t>0]\bigr)\\ 
& \kern0.6\textwidth  \times \Lambda(d\alpha)\Lambda(d\beta)dt \\ 
& \quad - \int_{-1}^0\int_\Gamma\int_\Gamma t\Cov\bigl(\bone[-1\le\Delta_\alpha < t< 0],\bone[-1 \le \Delta_\beta < t<0]\bigr)\\ 
& \kern0.6\textwidth \times \Lambda(d\alpha)\Lambda(d\beta)dt}^{1/2}.
\end{split}
}
This completes the proof of Theorem~\ref{thm2}.
\end{proof}

Using the fact that independence implies uncorrelatedness, we have the following corollary.

\begin{cor}\label{cor1}
Let~$\Xi$ be a random measure on~$\Gamma$ with finite mean measure~$\Lambda$ such that~$\E|\Xi|^4 < \infty$, and set~$B^2 = \Var(|\Xi|)$. Assume that~$\Xi$ and~$\Xi_\alpha$, $\alpha \in \Gamma$, are defined on the same probability space. Define 
\be{
W = \frac{|\Xi| - \E|\Xi|}{B},\qquad W_\alpha = \frac{|\Xi_\alpha| - \E|\Xi|}{B},\qquad \Delta_\alpha = W_\alpha - W. 
}
Assume that there is a set~$D \in \mathcal{B}(\Gamma\times\Gamma)$ such that $D$ is symmetric, i.e., $\{(x,y):\ (y,x)\in D\}=D$, and for all~$(\alpha,\beta) \neq D$, $\Delta_\alpha$ and~$\Delta_\beta$ are independent. Then
\be{
\dk(\mathcal{L}(W),\mathcal{N}(0,1)) \le 7s_1 + 5.5s_2 + 10s_3,
}
where
\bea{
s_1 &= \frac{1}{B^2}\left(\int_{(\alpha,\beta)\in D}\E\bclc{Y_\alpha^2 \bone[|Y_\alpha|\le B]}\Lambda(d\alpha)\Lambda(d\beta)\right)^\frac{1}{2};\\
s_2 &= \frac{1}{B^3}\int_\Gamma\E Y_\alpha^2\Lambda(d\alpha); \\
s_3 &= \frac{1}{B^3}\int_{(\alpha,\beta)\in D} \E \bclc{|Y_\alpha|\bone[|Y_\alpha|\le B]}\Lambda(d\alpha)\Lambda(d\beta).
}
\end{cor}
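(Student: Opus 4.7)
The plan is to invoke Theorem~\ref{thm2} and bound each of its five error terms $r_1',\ldots,r_5'$ by linear combinations of $s_1$, $s_2$, $s_3$. The key mechanism is that independence of $\Delta_\alpha,\Delta_\beta$ for $(\alpha,\beta)\notin D$ kills all covariance contributions off $D$, while the symmetry of $D$ converts the symmetric $\tfrac12$-bounds into the clean constants appearing in $s_1$ and $s_3$.

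For $r_1'$, I would apply Jensen's inequality $\E|Z-\E Z|\le\sqrt{\Var Z}$ to $Z=B^{-2}\int_\Gamma Y_\alpha\bone[|Y_\alpha|\le B]\Lambda(d\alpha)$, expand the variance as a double $\Lambda\otimes\Lambda$-integral of covariances, restrict to $D$ by independence, and bound each covariance on $D$ via $|\Cov(X,Y)|\le\tfrac12(\E X^2+\E Y^2)$; the symmetry of $D$ in $(\alpha,\beta)$ then gives $r_1'\le s_1$.

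For $r_2'$ and $r_3'$ no covariances enter: $r_2'\le s_2$ is immediate upon dropping the indicator, and the elementary inequality $|Y_\alpha|\bone[|Y_\alpha|>B]\le B^{-1}Y_\alpha^2\bone[|Y_\alpha|>B]$ yields $r_2'+r_3'\le s_2$, whence $5.5r_2'+5r_3'\le 5.5(r_2'+r_3')\le 5.5s_2$. For $r_4'$ and $r_5'$, independence again restricts the double integral to $D$. There the indicator-covariance bound $|\Cov(\bone_A,\bone_B)|\le\tfrac12(\prob(A)+\prob(B))$, combined with the symmetry of $D$, reduces the integrand to single-index probabilities, and the $t$-integrations $\int_0^1\prob[t<\Delta_\alpha\le 1]dt=\E[\Delta_\alpha\bone[0<\Delta_\alpha\le 1]]$ and $\int_0^1 t\,\prob[t<\Delta_\alpha\le 1]dt=\tfrac12\E[\Delta_\alpha^2\bone[0<\Delta_\alpha\le 1]]$ (together with their $t<0$ analogues) deliver $r_4'\le s_3$ and $(r_5')^2\le s_1^2/2$, hence $r_5'\le s_1/\sqrt{2}$.

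Assembling the pieces, $2r_1'+7r_5'\le(2+7/\sqrt 2)s_1<7s_1$, which combined with $5.5r_2'+5r_3'\le 5.5s_2$ and $10r_4'\le 10s_3$ yields the claimed bound. The only delicate point is the bookkeeping between the $\tfrac12$-factors from the covariance bounds and the two-fold symmetrization over $D$, together with the numerical inequality $2+7/\sqrt 2\le 7$ needed to absorb the contribution of $r_5'$ into the $s_1$-term.
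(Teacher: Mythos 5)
Your proposal is correct and follows essentially the same route as the paper: apply Theorem~\ref{thm2}, use independence off $D$ together with symmetry of $D$ to reduce double integrals to single-index quantities, and close the gap via the numerical observation $2+7/\sqrt{2}<7$. The only cosmetic difference is that the paper bounds $\int_0^1\Cov(\bone[\Delta_\alpha>t],\bone[\Delta_\beta>t])\,dt$ via $\E\min(\cdot,\cdot)$ rather than your symmetrized $\tfrac12(\prob+\prob)$ bound, but these produce identical estimates for $r_4'$ and $r_5'$.
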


\begin{proof} By Theorem~\ref{thm2}, we have
\besn{\label{29}
r_1' &\le \frac{1}{B}\left(\mean\left\{\left[\int_\Gamma[\Delta_\alpha \bone[|\Delta_\alpha|\le 1]-\mean\clc{\Delta_\alpha \bone[|\Delta_\alpha|\le 1]}]\Lambda(d\alpha)\right]^2\right\}\right)^{1/2}\\
&= \frac{1}{B}\left(\int_{(\alpha,\beta)\in D}\Cov(\Delta_\alpha \bone[|\Delta_\alpha|\le 1],\Delta_\beta \bone[|\Delta_\beta|\le 1])\Lambda(d\alpha)\Lambda(d\beta)\right)^\frac{1}{2}\\
&\le \frac{1}{B}\left(\int_{(\alpha,\beta)\in D}\frac12\left(\var(\Delta_\alpha \bone[|\Delta_\alpha|\le 1])+\var(\Delta_\beta \bone[|\Delta_\beta|\le 1]\right) \Lambda(d\alpha)\Lambda(d\beta)\right)^\frac{1}{2}\\
&= \frac{1}{B}\left(\int_{(\alpha,\beta)\in D}\var(\Delta_\alpha \bone[|\Delta_\alpha|\le 1])\Lambda(d\alpha)\Lambda(d\beta)\right)^\frac{1}{2}\\
&\le \frac{1}{B^2}\left(\int_{(\alpha,\beta)\in D}\E\bclc{Y_\alpha^2 \bone[|Y_\alpha|\le B]}\Lambda(d\alpha)\Lambda(d\beta)\right)^\frac{1}{2}\eqqcolon s_1,
}
where the second equality is due to the symmetry of the set~$D$. Next,
\bean{
\begin{split} \notag
r_2'+r_3'&\le \frac1{B^3}\int_\Gamma\E Y_\alpha^2\Lambda(d\alpha)\eqqcolon s_2,
\end{split} \\
\begin{split} \notag
{r_4'} &= \frac{1}{B^2}\int_0^1\int_{(\alpha,\beta)\in D}\Cov\bigl(\bone[1\ge\Delta_\alpha >t>0],\bone[1 \ge \Delta_\beta > t>0]\bigr)\\ 
& \kern0.6\textwidth  \times \Lambda(d\alpha)\Lambda(d\beta)dt \\  
& \quad+ \frac{1}{B^2}\int_{-1}^0\int_{(\alpha,\beta)\in D}\Cov\bigl(\bone[-1\le\Delta_\alpha < t< 0],\bone[-1 \le \Delta_\beta < t<0]\bigr)\\  
& \kern0.6\textwidth  \times \Lambda(d\alpha)\Lambda(d\beta)dt \\
&\le \frac{1}{B^2}\int_{(\alpha,\beta)\in D} \E\clc{\min(|\Delta_\alpha|\bone[|\Delta_\alpha|\le 1],|\Delta_\beta|\bone[|\Delta_\beta|\le 1])}\Lambda(d\alpha)\Lambda(d\beta)\\
&\le {\frac{1}{B^3}\int_{(\alpha,\beta)\in D} \E\clc{|Y_\alpha|\bone[|Y_\alpha|\le B]}\Lambda(d\alpha)\Lambda(d\beta)\eqqcolon s_3};
\end{split}\\
\begin{split} \label{30}
{r_5'} &= \frac{1}{B}\bbbclr{\int_0^1\int_{(\alpha,\beta) \in D} t\Cov\bigl(\bone[1\ge\Delta_\alpha >t>0],\bone[1 \ge \Delta_\beta > t>0]\bigr)\\ 
& \kern0.6\textwidth  \times \Lambda(d\alpha)\Lambda(d\beta)dt \\ 
& \quad- \int_{-1}^0\int_{(\alpha,\beta)\in D} t\Cov\bigl(\bone[-1\le\Delta_\alpha < t< 0],\bone[-1 \le \Delta_\beta < t<0]\bigr)\\ 
&\kern0.6\textwidth  \times \Lambda(d\alpha)\Lambda(d\beta)dt}^{1/2} \\ 
&\le \frac{1}{B}\bbbclr{\int_0^1\int_{(\alpha,\beta) \in D} t\E\clc{ \bone[1\ge\Delta_\alpha >t>0]\bone[1 \ge \Delta_\beta > t>0]}\\ 
& \kern0.6\textwidth \times \Lambda(d\alpha)\Lambda(d\beta)dt \\ 
& \quad- \int_{-1}^0\int_{(\alpha,\beta)\in D} t\E\clc{ \bone[-1\le\Delta_\alpha < t< 0]\bone[-1 \le \Delta_\beta < t<0]}\\ 
& \kern0.6\textwidth  \times \Lambda(d\alpha)\Lambda(d\beta)dt}^{1/2} \\ 
&= \frac{1}{{\sqrt{2}}B}\bbbclr{\int_{(\alpha,\beta) \in D}\E\bclc{\min\bigl(\Delta_\alpha^2\bone[|\Delta_\alpha|\le 1],\Delta_\beta^2\bone[|\Delta_\beta|\le 1]\bigr)}\Lambda(d\alpha)\Lambda(\beta)}^\frac{1}{2} \\
&\le {\frac{1}{\sqrt{2}B^2}\bbbclr{\int_{(\alpha,\beta) \in D}\E \bclc{Y_\alpha^2\bone[|Y_\alpha|\le B]}\Lambda(d\alpha)\Lambda(\beta)}^\frac{1}{2}
=\frac1{\sqrt{2}}s_1}.
\end{split}
}
The proof of the corollary is completed by combining~\eq{29} to~\eq{30}.
\end{proof}

\section{Applications}\label{sec3}

\subsection{Completely random measures}

A random measure~$\Xi$ on the carrier space~$(\Gamma,\cb(\Gamma))$ is said to be \textit{completely random} (see~\cite{Kingman67}) if for any~$k\ge 1$ and any pairwise disjoint sets~$A_1,\dots,A_k\in\cb(\Gamma)$, $\Xi(A_i)$, $1\le i\le k$, are independent. Well-known examples include the compound Poisson process with cluster distributions on~$\R_+\coloneqq [0,\infty)$ (see  \cite[p.~198]{DV03}), the Gamma process (see \cite[p.~11]{DV08}) and the P\'olya sum process (see \cite{Zessin09} and \cite{Rafler11}). The former two processes cannot in general be represented as an integral of a random field with respect to a point process with finite mean measure, hence they are not covered by the general theory of \cite{BX06}. 

\begin{thm} \label{thm3}Let~$\Xi$ be a completely random measure with mean measure~$\Lambda$ and finite fourth moment~$\mean|\Xi|^4<\infty$. Let~$\mu\coloneqq \mu_\Xi\coloneqq \Lambda(\Gamma)$, $B^2\coloneqq \var(|\Xi|)$ and~$W=(|\Xi|-\mean|\Xi|)/B$, then
\bean{
&\dk({\law}(W),\cn(0,1))\notag\\
\begin{split}\label{31}
&\qquad\le \frac{10}{B^2}\left(\sum_{\alpha\in\Gamma}\E\bclc{\Xi(\{\alpha\})^3}\Lambda(\{\alpha\})\right)^{1/2}+\frac{5.5}{B^3}\E\sum_{\alpha\in\Gamma}\Xi(\{\alpha\})^3\\
&\qquad\quad+\frac{25.5}{B^3}\sum_{\alpha\in\Gamma}\E\bclc{\Xi(\{\alpha\})^2}\Lambda(\{\alpha\})
\end{split}\\
\begin{split}\label{32}
&\qquad\le \frac{10}{B^2}\left(\sum_{\alpha\in\Gamma}\E\bclc{\Xi(\{\alpha\})^3}\Lambda(\{\alpha\})\right)^{1/2}+\frac{31}{B^3}\E\sum_{\alpha\in\Gamma}\Xi(\{\alpha\})^3.
\end{split}
}
\end{thm}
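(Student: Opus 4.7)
The plan is to apply Corollary~\ref{cor1} to $\Xi$, exploiting the complete randomness of $\Xi$ to force the dependency set $D$ to reduce to (essentially) the diagonal of $\Gamma\times\Gamma$.

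I would begin by constructing an explicit Palm coupling. Using the standard structure theorem for completely random measures, $\Xi$ decomposes into a deterministic part, a fixed-atom part $\sum_i c_i\delta_{\alpha_i}$ with independent nonnegative weights, and a Poisson-driven jump part. For a fixed atom $\alpha$ with $\Lambda(\{\alpha\}) > 0$, the Palm identity~\eqref{19} forces $\Xi_\alpha(\{\alpha\})$ to be a size-biased copy of $\Xi(\{\alpha\})$, which I construct independently of the rest of $\Xi$, so that $\Xi_\alpha$ agrees with $\Xi$ off~$\{\alpha\}$. For a generic $\alpha$ in the diffuse support, $\Xi_\alpha$ differs from $\Xi$ by an independent jump at $\alpha$ whose law is read off the L\'evy intensity. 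In either case $Y_\alpha = |\Xi_\alpha| - |\Xi|$ is a function of $\Xi(\{\alpha\})$ together with an independent random variable.

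The defining independence of $\Xi$ on disjoint sets then makes $Y_\alpha$ and $Y_\beta$ independent for all $\alpha\neq\beta$, so Corollary~\ref{cor1} applies with $D = \{(\alpha,\alpha):\alpha\in\Gamma\}$. The product measure $\Lambda\otimes\Lambda$ restricted to this diagonal picks out only the fixed atoms, so $s_1$ and $s_3$ reduce to sums $\sum_\alpha \E\{\cdots\}\Lambda(\{\alpha\})$, while $s_2$ remains $\int_\Gamma \E Y_\alpha^2\,\Lambda(d\alpha)$. A Campbell-type identity for the Poisson jump part rewrites this integral as $\E\sum_\alpha \Xi(\{\alpha\})^3$ up to lower-order pieces, and the size-biasing relation $\E\hat c_\alpha^2 = \E\Xi(\{\alpha\})^3/\Lambda(\{\alpha\})$, combined with elementary inequalities such as $(a-b)^2\le 2(a^2+b^2)$, takes care of the fixed-atom contributions. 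Collecting the pieces with the coefficients $7,5.5,10$ from Corollary~\ref{cor1} yields~\eqref{31}.

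To pass from~\eqref{31} to~\eqref{32}, I would use the one-line estimate $\Lambda(\{\alpha\})\cdot\E\Xi(\{\alpha\})^2 \le \E\Xi(\{\alpha\})^3$, which follows from $\E c\cdot \E c^2 \le (\E c^2)^{3/2}\le \E c^3$ by Cauchy-Schwarz and Jensen. Summing over $\alpha$ absorbs the third term of~\eqref{31} into the second and produces the coefficient $5.5 + 25.5 = 31$. The main technical hurdle is the coupling construction in the first step: one must verify that the Palm version on the Poisson part can be realized as adding a single independent jump at~$\alpha$, so that $Y_\alpha$ genuinely depends only on the local structure at~$\alpha$. This is precisely where the complete randomness is used; once the coupling is in place, the remaining moment computations are routine.
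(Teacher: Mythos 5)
Your proposal matches the paper's proof closely: apply Corollary~\ref{cor1} with $D$ the diagonal (so the double integrals only pick up the fixed atoms), use the Palm identity to convert $\E\bclc{\Xi_\alpha(\{\alpha\})^k}$ into $\E\bclc{\Xi(\{\alpha\})^{k+1}}/\Lambda(\{\alpha\})$, and pass from~\eqref{31} to~\eqref{32} via $\E\bclc{\Xi(\{\alpha\})^2}\Lambda(\{\alpha\})\le\E\bclc{\Xi(\{\alpha\})^3}$. One caveat on the constants: because $\Xi_\alpha(\{\alpha\})$ and $\Xi(\{\alpha\})$ are nonnegative, the paper uses the sharper bound $(a-b)^2\le a^2+b^2$ (the cross term $-2ab\le0$), which is what gives $s_1\le \sqrt2\,B^{-2}\bclr{\sum_\alpha\E\bclc{\Xi(\{\alpha\})^3}\Lambda(\{\alpha\})}^{1/2}$ and hence the leading coefficient $10\ge7\sqrt2$; the looser inequality $(a-b)^2\le2(a^2+b^2)$ you cite would produce $14$ in place of $10$ in the first term of~\eqref{31}.
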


\begin{re} (1) If~$\Lambda$ is diffuse at~$\alpha$ (i.e., $\Lambda(\{\alpha\})=0$), then~$\Xi(\{\alpha\})=0$ a.s. Hence, if~$\Lambda$ is a diffuse measure, then the bound~\eq{31} is reduced to
\be{
	\dk(\mathcal{L}(W),\mathcal{N}(0,1)) \le \frac{5.5}{B^3} \E\sum_{\alpha\in\Gamma}\Xi(\{\alpha\})^3
= \frac{5.5}{B^3}\int_\Gamma\E\bclc{\Xi_\alpha(\{\alpha\})^2}\Lambda(d\alpha).
}

\noindent(2) For a simple Poisson point process with $\Lambda(\Gamma)=\lambda$, the bound in \eq{32} becomes~$31\lambda^{-1/2}$, which compares favourably with those in the literature; see, for example, \cite{LSY17}.
\end{re}

\begin{proof}[Proof of Theorem~\ref{thm3}] Using similar notation as in Corollary~\ref{cor1}, we have~$Y_\alpha={\Xi_\alpha(\{\alpha\})-\Xi(\{\alpha\})}$ and~$Y_\alpha$ is independent of~$Y_\beta$ unless~$\alpha=\beta$. H\"older's inequality ensures that
\ben{
  \mean\bclc{\Xi(\{\alpha\})^2}\Lambda(\{\alpha\})\le \mean\bclc{\Xi(\{\alpha\})^3}.\label{33}
}
Hence, direct computation gives
\bes{
s_1&\le\frac1{B^2}\bbbclr{\sum_\Gamma\mean\bclc{Y_\alpha^2}\Lambda(\{\alpha\})^2}^{1/2}\\
&\le\frac1{B^2}\bbbclr{\sum_\Gamma\bclr{\mean\bclc{\Xi(\{\alpha\})^2}+\E\bclc{\Xi_\alpha(\{\alpha\})^2}}\Lambda(\{\alpha\})^2}^{1/2}\\
&=\frac{1}{B^2}\bbbclr{\sum_\Gamma\bclr{\E\bclc{\Xi(\{\alpha\})^2}\Lambda(\{\alpha\})^2+\E\bclc{\Xi(\{\alpha\})^3}\Lambda(\{\alpha\})}}^{1/2}\\
&\le\frac{\sqrt{2}}{B^2}\bbbclr{\sum_\Gamma\E\bclc{\Xi(\{\alpha\})^3}\Lambda(\{\alpha\})}^{1/2},
}
where the equality is due to~\eq{19} and the last inequality follows from \eq{33}. The same reasoning gives
\bes{
s_2&\le\frac1{B^3}\int_\Gamma\bclr{\mean\bclc{\Xi(\{\alpha\})^2}+\E\bclc{\Xi_\alpha(\{\alpha\})^2}}\Lambda(d\alpha)\\
&=\frac{1}{B^3} \bbbclr{\sum_{\alpha\in\Gamma}\E\bclc{\Xi(\{\alpha\})^2}\Lambda(\{\alpha\})+\E\int_\Gamma\Xi(\{\alpha\})^2\Xi(d\alpha)}\\
&=\frac{1}{B^3} \bbbclr{\sum_{\alpha\in\Gamma}\E\bclc{\Xi(\{\alpha\})^2}\Lambda(\{\alpha\})+\E\sum_{\alpha\in\Gamma}\Xi(\{\alpha\})^3},
}
where the first equality follows from the fact that~$\Lambda(\{\alpha\})=0$ implies~$\Xi(\{\alpha\})=0$ a.s. and~\eq{19}.
Finally,
\bes{
s_3&\le\frac1{B^3}\sum_\Gamma\bclr{\mean\clc{\Xi(\{\alpha\})}+\E\clc{\Xi_\alpha(\{\alpha\})}}\Lambda(\{\alpha\})^2\\
&=\frac1{B^3}\sum_\Gamma\bclr{\mean\clc{\Xi(\{\alpha\})}\Lambda(\{\alpha\})^2+\E\bclc{\Xi(\{\alpha\})^2}\Lambda(\{\alpha\})}\\
&\le\frac2{B^3}\sum_\Gamma\E\bclc{\Xi(\{\alpha\})^2}\Lambda(\{\alpha\}).
}
Combining these estimates and Corollary~\ref{cor1} gives~\eq{31}. \eq{32} is an immediate consequence of~\eq{31} and~\eq{33}.
\end{proof}

\begin{cor}Let~$\Xi^{(i)}$ for $1\le i\le n$ be independent random measures on the carrier space~$(S,\cs)$. Define\/ $\Xi=\sum_{i=1}^n\Xi^{(i)}$, $B^2=\var(|\Xi|)$ and $W=(|\Xi|-\mean|\Xi|)/B$. Then
\be{
 \dk({\law}(W),\cn(0,1))\le \frac{10}{B^2}\bbbclr{\sum_{i=1}^n\mean|\Xi^{(i)}|\mean|\Xi^{(i)}|^3}^{1/2}+\frac{31}{B^3}\sum_{i=1}^n\mean|\Xi^{(i)}|^3
}
\end{cor}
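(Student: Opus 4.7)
The plan is to reduce this to Theorem~\ref{thm3} by aggregating each independent component into a single atom on an auxiliary discrete carrier space. Specifically, I would set $\Gamma = \{1,2,\dots,n\}$ equipped with the power set $\sigma$-algebra, and define a random measure $\tilde\Xi$ on $\Gamma$ by $\tilde\Xi(\{i\}) = |\Xi^{(i)}|$. Because the $\Xi^{(i)}$ are independent, so are the total masses $|\Xi^{(1)}|,\dots,|\Xi^{(n)}|$, and therefore $\tilde\Xi(A_1),\dots,\tilde\Xi(A_k)$ are independent for pairwise disjoint $A_1,\dots,A_k\subseteq\Gamma$. Thus $\tilde\Xi$ is a completely random measure in the sense of Section~3.1.

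Next, I would identify the relevant moment quantities associated with $\tilde\Xi$. Its mean measure is $\tilde\Lambda(\{i\}) = \E|\Xi^{(i)}|$, and its total mass satisfies $|\tilde\Xi| = \sum_{i=1}^n|\Xi^{(i)}| = |\Xi|$, so that $\E|\tilde\Xi| = \E|\Xi|$, $\Var(|\tilde\Xi|) = B^2$, and the standardised variable coincides with~$W$. In particular, $\E|\tilde\Xi|^4 = \E|\Xi|^4 < \infty$ follows from the moment hypothesis built into the hypotheses of Theorem~\ref{thm3} (which is satisfied whenever $\sum_{i=1}^n\E|\Xi^{(i)}|^3 < \infty$ together with the independence assumption).

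I would then invoke Theorem~\ref{thm3} applied to $\tilde\Xi$ in the form~\eq{32}. The two quantities appearing on the right-hand side translate immediately:
\begin{align*}
\sum_{\alpha\in\Gamma}\E\bclc{\tilde\Xi(\{\alpha\})^3}\tilde\Lambda(\{\alpha\}) &= \sum_{i=1}^n \E|\Xi^{(i)}|^3\,\E|\Xi^{(i)}|,\\
\E\sum_{\alpha\in\Gamma}\tilde\Xi(\{\alpha\})^3 &= \sum_{i=1}^n \E|\Xi^{(i)}|^3.
\end{align*}
Substituting into~\eq{32} yields the claimed bound directly.

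The main (and only) subtlety is checking that $\tilde\Xi$ really is completely random on $\Gamma$; this is essentially automatic from the independence of the $\Xi^{(i)}$, since every subset $A\subseteq\Gamma$ gives $\tilde\Xi(A) = \sum_{i\in A}|\Xi^{(i)}|$, which is a function only of those $|\Xi^{(i)}|$ with $i\in A$, and these are independent across disjoint $A$'s. Once this is in place, there is no further work besides recording the arithmetic above.
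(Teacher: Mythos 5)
Your proposal is correct and follows essentially the same route as the paper: both construct the auxiliary completely random measure $\tilde\Xi = \sum_{i=1}^n |\Xi^{(i)}|\,\delta_i$ on the discrete carrier $\Gamma=\{1,\dots,n\}$, observe that $|\tilde\Xi|=|\Xi|$ so the standardised variable is unchanged, and then read off the bound from inequality \eq{32} of Theorem~\ref{thm3}. The only small imprecision is your remark that $\E|\Xi|^4<\infty$ follows from finiteness of $\sum_i \E|\Xi^{(i)}|^3$ — that implication does not hold as stated (fourth moments of the $|\Xi^{(i)}|$ are needed), though this detail is also left implicit in the paper and does not affect the substance of the argument.
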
 

\begin{proof} Define~$\Xi'=\sum_{i=1}^n|\Xi^{(i)}|\delta_i$, where~$\delta_i$ is the Dirac measure at~$i$, then~$\Xi'$ is a completely random measure on the carrier space~$\Gamma\coloneqq \{1,\dots,n\}$ with mean measure~$\Lambda'(\{i\})=\mean|\Xi^{(i)}|$, $i\in S'$. We have~$B^2=\var(|\Xi'|)$, $W=(|\Xi'|-\mean|\Xi'|)/B$, hence the claim follows from~\eq{32}.
\end{proof}

\subsection{Excursion random measure}

Let~$(S,\cb(S))$ be a metric space and~$\{X_t,\ 0\le t\le T\}$ be an~$S$-valued random process. Define~$\cf_{a,b}=\sigma\{X_t:\ a\le t\le b\}$, for~$0\le a\le b\le T$. We say that~$\{X_t,\ 0\le t\le T\}$ is~\emph{$l$-dependent} with~$l>0$ if~$\cf_{0,b}$ is independent of~$\cf_{b+l,T}$ for all~$0\le b<b+l\le T$. We define the excursion random measure
\be{
 \Xi(dt)=\bone_E(t,X_t)dt,\ E\in \cb([0,T]\times S).
}
Define~$\mu=\mean\Xi([0,T])$, $B=\sqrt{\var(|\Xi|)}$ and~$W=(|\Xi|-\mu)/B$.

The excursion random measure of a stationary process was defined by \cite{HL98}.
It was shown by \cite{HL98} that the asymptotic distribution of the
excursion random measure at high levels of exceedances gives a range of useful information about the extremal behavior of the stationary process. Under very general conditions, \cite{HL98} demonstrated that various asymptotic properties of the excursion random measures can be established. Our Theorem~\ref{thm2}
can be used to prove the following normal approximation error bound for the total excursion time of $l$-dependent random processes.

\begin{thm} For the~$l$-dependent process~$\{X_t,\ 0\le t\le T\}$, we have
\ben{\dk({\law}(W),\cn(0,1))\le \frac{({14}\sqrt{2}+8)l^{3/2}\mu^{1/2}}{B^2}+
\frac{{102}l^2\mu}{B^3}.\label{67}}
\end{thm}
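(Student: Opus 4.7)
My plan is to apply Corollary~\ref{cor1} to the excursion random measure $\Xi$ with the dependency set $D=\{(\alpha,\beta)\in[0,T]^2:|\alpha-\beta|\le 2l\}$. Three ingredients are needed: a Palm coupling whose difference is supported on a short interval, the $l$-dependence which will give independence of $\Delta_\alpha$ and $\Delta_\beta$ outside~$D$, and the fact that the intensity measure~$\Lambda$ has Lebesgue density at most one.

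First I would construct, for each $\alpha\in[0,T]$ and on the same probability space as $X$, a process $X^{(\alpha)}$ with the Palm law (i.e.\ $X$ conditioned on $(\alpha,X_\alpha)\in E$) that agrees with $X$ on $[0,T]\setminus(\alpha-l,\alpha+l)$, with the restriction to $(\alpha-l,\alpha+l)$ drawn from the conditional Palm distribution using an auxiliary source of randomness $U_\alpha$, chosen independently across~$\alpha$. Setting $\Xi_\alpha(dt)=\bone_E(t,X_t^{(\alpha)})dt$ then realises the Palm measure on the same space as $\Xi$, and since $\bone_E\in\{0,1\}$ one obtains the pointwise bound $|Y_\alpha|=\babs{\int_{(\alpha-l,\alpha+l)}\bclr{\bone_E(t,X_t^{(\alpha)})-\bone_E(t,X_t)}dt}\le 2l$. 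Exploiting the $l$-dependence of $X$ together with the independence of the $U_\alpha$'s, one argues that for $|\alpha-\beta|>2l$ the two Palm perturbations are supported on disjoint intervals and $\Delta_\alpha,\Delta_\beta$ are independent, so Corollary~\ref{cor1} applies with the above~$D$.

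It remains to evaluate $s_1,s_2,s_3$. Because $\Lambda(dt)=\prob((t,X_t)\in E)dt$ has Lebesgue density bounded by~$1$, $\Lambda(\{\beta:|\beta-\alpha|\le 2l\})\le 4l$ and hence $\int_D\Lambda(d\alpha)\Lambda(d\beta)\le 4l\mu$; combined with $\E Y_\alpha^2\le 4l^2$ and $\E|Y_\alpha|\le 2l$ this gives $s_1\le 4l^{3/2}\mu^{1/2}/B^2$, $s_2\le 4l^2\mu/B^3$ and $s_3\le 8l^2\mu/B^3$, from which $5.5s_2+10s_3\le(22+80)l^2\mu/B^3=102l^2\mu/B^3$ is immediate. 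For the $l^{3/2}\mu^{1/2}/B^2$ piece I would not apply Corollary~\ref{cor1}'s rounded factor of~$7$ but instead retain the tighter intermediate bound $2r_1'+7r_5'\le(2+7/\sqrt{2})s_1$ that is visible in its proof (via $r_1'\le s_1$ and $r_5'\le s_1/\sqrt{2}$); this produces exactly $(2+7/\sqrt{2})\cdot 4=8+14\sqrt{2}$.

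The main obstacle is the coupling/independence step: one has to exhibit a joint realisation of the family $\{(X,X^{(\alpha)}):\alpha\in[0,T]\}$ that reproduces each Palm marginal while yielding pairwise independence of $\Delta_\alpha$ and $\Delta_\beta$ for all $(\alpha,\beta)\notin D$; this is where the $l$-dependence of $X$, the local nature of the perturbation, and the independent auxiliary randomness $U_\alpha$ have to be combined carefully. Once that coupling is in place the rest is straightforward substitution into Corollary~\ref{cor1} with bookkeeping of constants, yielding~\eqref{67}.
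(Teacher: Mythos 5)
Your proposal matches the paper's proof in substance: the paper applies Theorem~\ref{thm2} directly with the same coupling (taking $\Xi_\alpha$ so that $Y_\alpha=\Xi_\alpha(N_\alpha)-\Xi(N_\alpha)$ with $N_\alpha=[0,T]\cap[\alpha-l,\alpha+l]$, giving $|Y_\alpha|\le 2l$ and $Y_\alpha\perp Y_\beta$ for $|\alpha-\beta|>2l$) and derives the identical intermediate bounds $r_1'\le 4l^{3/2}\mu^{1/2}/B^2$, $r_2'+r_3'\le 4l^2\mu/B^3$, $r_4'\le 8l^2\mu/B^3$, $r_5'\le 2\sqrt{2}\,l^{3/2}\mu^{1/2}/B^2$, which are exactly your $s_1$, $s_2$, $s_3$ and $s_1/\sqrt{2}$. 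Your remark that the packaged constant $7s_1$ in Corollary~\ref{cor1} is too loose and must be unrolled to $2r_1'+7r_5'\le(2+7/\sqrt{2})s_1$ to recover $8+14\sqrt{2}$ is precisely why the paper works from Theorem~\ref{thm2} rather than the corollary, so the two arguments are the same modulo packaging.
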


\begin{proof} Write~$\Lambda(dt)=\mean\Xi(dt)$ and~$N_\alpha=[0,T]\cap[\alpha-l,\alpha+l]$ for~$\alpha\in[0,T]$. Since 
$\{X_t,\ 0\le t\le T\}$ is~$l$-dependent, we can take~$\Xi_\alpha$ such that~$Y_\alpha=\Xi_\alpha(N_\alpha)-\Xi(N_\alpha)$ for all~$\alpha\in[0,T]$, and~$Y_\alpha$ is independent of~$Y_\beta$ for all~$|\alpha-\beta|>2l$. Moreover, we have~$|Y_\alpha|\le 2l$. Hence,
\besn{\label{68}
{r_1'}&\leq\frac1{B^2}\left(\iint_{|\beta-\alpha|\le 2l}\cov\left(Y_\alpha \bone[|Y_\alpha|\le B],Y_\beta \bone[|Y_\beta|\le B]\right)\Lambda(d\alpha)\Lambda(d\beta)\right)^{1/2}\\
&\le\frac1{B^2}\left(\iint_{|\beta-\alpha|\le 2l}\frac12\left[\var\left(Y_\alpha \bone[|Y_\alpha|\le B]\right)+\var\left(Y_\beta \bone[|Y_\beta|\le B]\right)\right]\Lambda(d\alpha)\Lambda(d\beta)\right)^{1/2}\\
&=\frac1{B^2}\left(\iint_{|\beta-\alpha|\le 2l}\var\left(Y_\alpha \bone[|Y_\alpha|\le B]\right)\Lambda(d\alpha)\Lambda(d\beta)\right)^{1/2}\\
&\le\frac1{B^2}\left(\iint_{|\beta-\alpha|\le 2l}\mean\left(Y_\alpha^2\right)\Lambda(d\alpha)\Lambda(d\beta)\right)^{1/2}\leq\frac{4}{B^2}\sqrt{l^3\mu}.
}
Similarly,
\bea{
r_2'+r_3'&=\frac1{B^3}\int_0^T\mean (Y_\alpha^2)\Lambda(d\alpha)
\le4l^2\mu/B^3,\\
\begin{split}
r_4'&=\frac1{B^2}\int_0^1\iint_{|\beta-\alpha|\le 2l}\cov\left( \bone[1\ge \Delta_\alpha>t>0], \bone[1\ge \Delta_\beta>t>0]\right)\\
&\kern0.6\textwidth \times \Lambda(d\alpha)\Lambda(d\beta)dt\\
&\quad+\frac1{B^2}\int_{-1}^0\iint_{|\beta-\alpha|\le 2l}\cov\left( \bone[-1\le \Delta_\alpha<t<0], \bone[-1\le \Delta_\beta<t<0]\right)\\
&\kern0.6\textwidth \times\Lambda(d\alpha)\Lambda(d\beta)dt\\
&\le\frac1{B^2}\int_0^1\iint_{|\beta-\alpha|\le 2l}\mean \bone[1\ge \Delta_\alpha>t>0]\Lambda(d\alpha)\Lambda(d\beta)dt\\
&\quad+\frac1{B^2}\int_{-1}^0\iint_{|\beta-\alpha|\le 2l}\mean \bone[-1\le \Delta_\alpha<t<0]\Lambda(d\alpha)\Lambda(d\beta)dt\\
&=\frac1{B^3}\iint_{|\beta-\alpha|\le 2l}\mean|Y_\alpha|\Lambda(d\alpha)\Lambda(d\beta)\le\frac{8l^2}{B^3}\mu,
\end{split}
}
and
\besn{\label{69}
r_5'&=\frac1B\bbbclc{\int_0^1\iint_{|\beta-\alpha|\le 2l}t\cov\left( \bone[1\ge \Delta_\alpha>t>0], \bone[1\ge \Delta_\beta>t>0]\right)\\
&\kern0.6\textwidth \times\Lambda(d\alpha)\Lambda(d\beta)dt\\
&\quad-\int_{-1}^0\iint_{|\beta-\alpha|\le 2l}t\cov\left( \bone[-1\le \Delta_\alpha<t<0], \bone[-1\le \Delta_\beta<t<0]\right)\\
&\kern0.6\textwidth \times\Lambda(d\alpha)\Lambda(d\beta)dt}^{1/2}\\
&\le\frac1B\left\{\int_0^1\iint_{|\beta-\alpha|\le 2l}t\mean \bone[1\ge \Delta_\alpha>t>0]\Lambda(d\alpha)\Lambda(d\beta)dt\right.\\
&\quad-\left.\int_{-1}^0\iint_{|\beta-\alpha|\le 2l}t\mean \bone[-1\le \Delta_\alpha<t<0]\Lambda(d\alpha)\Lambda(d\beta)dt\right\}^{1/2}\\
&\le\frac1B\left\{\frac12\iint_{|\beta-\alpha|\le 2l}\mean\clc{\Delta_\alpha^2}\Lambda(d\alpha)\Lambda(d\beta)dt\right\}^{1/2}
\le\frac{2\sqrt{2}l^{3/2}\mu^{1/2}}{B^2}.
}
Finally, we have from Theorem~\ref{thm2} that
\be{
 \dk({\law}(W),\cn(0,1))\le 2r_1'+5.5(r_2'+r_3')+10r_4'+7r_5',
}
so collecting~\eq{68} to~\eq{69}, we obtain~\eq{67}.
\end{proof}

\begin{cor} Let~$I_i$, $1\le i\le n$, be independent indicator random variables such that~$\prob[I_i=1]=p_i$. Let~$S_n=\sum_{i=1}^{n-k+1}\prod_{j=i}^{i+k-1}I_j$, the number of~$k$-runs in the sequence. Define~$\mu_n=\mean S_n=\sum_{i=1}^{n-k+1}\prod_{j=i}^{i+k-1}p_j$, $B_n=\sqrt{\var(S_n)}$, $W=(S_n-\mu_n)/B_n$, then
\be{
	\dk({\law}(W),\cn(0,1))
		\le \frac{({14}\sqrt{2}+8)k^{3/2}\mu_n^{1/2}}{B_n^2}
				+ \frac{{102}k^2\mu_n}{B_n^3}.
}
In particular, if~$p_i=p\in(0,1)$ for all~$i$, then 
\be{
 \dk({\law}(W),\cn(0,1))=\bigo(n^{-1/2}).
}
\end{cor}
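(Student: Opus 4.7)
My plan is to realize $S_n$ as the total mass $|\Xi|$ of an excursion random measure of a suitably chosen $l$-dependent process, and then apply the preceding theorem as a black box.

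Concretely, I take $T=n-k+1$ and set $S=\{0,1\}^k$, and define the piecewise-constant $S$-valued process $X_t=(I_{\lfloor t\rfloor+1},I_{\lfloor t\rfloor+2},\ldots,I_{\lfloor t\rfloor+k})$ for $t\in[0,T)$. Choosing $E=[0,T]\times\{(1,\ldots,1)\}$ as the excursion set, the excursion random measure becomes
\be{
\Xi(dt)=\bone_E(t,X_t)\,dt,\qquad |\Xi|=\int_0^T\bone\bcls{X_t=(1,\ldots,1)}dt=\sum_{i=1}^{n-k+1}\prod_{j=i}^{i+k-1}I_j=S_n,
}
so $\mu=\mu_n$, $B=B_n$, and $W=(|\Xi|-\mu)/B$ coincides with $(S_n-\mu_n)/B_n$ as required.

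Next, I verify that $\{X_t\}$ is $l$-dependent with $l=k$. For any $0\le b<b+k\le T$, the $\sigma$-field $\cf_{0,b}$ is measurable with respect to $I_1,\ldots,I_{\lfloor b\rfloor+k}$, while $\cf_{b+k,T}$ is measurable with respect to $I_{\lfloor b+k\rfloor+1},\ldots,I_n$. Since $k$ is an integer, $\lfloor b+k\rfloor=\lfloor b\rfloor+k$, so the two index ranges are disjoint; the independence of the $I_i$ therefore yields $\cf_{0,b}\perp\cf_{b+k,T}$. Invoking the preceding theorem with $l=k$ produces the bound
\be{
\dk(\law(W),\cn(0,1))\le \frac{(14\sqrt{2}+8)k^{3/2}\mu_n^{1/2}}{B_n^2}+\frac{102\,k^2\mu_n}{B_n^3},
}
which is the stated inequality.

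For the homogeneous specialization $p_i\equiv p\in(0,1)$, I would simply observe that $\mu_n=(n-k+1)p^k=\Theta(n)$ and, by a routine covariance calculation for the overlapping run indicators (only pairs within distance $k-1$ contribute), $B_n^2=\var(S_n)=\Theta(n)$ with $k$ and $p$ fixed. Substituting into the bound shows that the first term is $\Theta(k^{3/2}n^{-1/2})$ and the second is $\Theta(k^2n^{-1/2})$, hence the combined bound is $\bigo(n^{-1/2})$. The only step that requires any genuine thought is the $l$-dependence verification; everything else is a direct substitution into the excursion theorem, and I have arranged the embedding so that $l=k$ is actually attained, which is what forces the constants in the corollary to match exactly those in the theorem.
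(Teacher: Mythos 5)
Your proof is correct and takes essentially the same route as the paper: realise $S_n$ as the total mass of an excursion random measure of a $k$-dependent piecewise-constant process and apply the preceding theorem with $l=k$. The only difference is cosmetic — you encode $X_t$ as the full window $(I_{\lfloor t\rfloor+1},\ldots,I_{\lfloor t\rfloor+k})\in\{0,1\}^k$ with excursion set $\{(1,\ldots,1)\}$, whereas the paper takes $X_t$ to be the scalar product $\prod_{j=\lfloor t\rfloor}^{\lfloor t\rfloor+k-1}I_j$ with excursion set $\{1\}$ (and a correspondingly shifted index range); both embeddings verify $k$-dependence in the same way and yield the identical bound.
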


\begin{proof} We define~$X_t\coloneqq X_{\lfloor t\rfloor}=\prod_{j=\lfloor t\rfloor}^{\lfloor t\rfloor+k-1}I_j$, $0\le t< n-k+2$. Let~$E=[1,n-k+2)\times\{1\}$, $\Xi(dt)= \bone_E(t,X_t)dt$, then the claim follows from~\eq{67} with~$l=k$. \end{proof}

\subsection{The total edge length of Ginibre-Voronoi tessellations}

The Ginibre point process (see \cite{G65}, \cite{Sosh00}, \cite{Mehta91} and \cite{Goldman10}) has attracted a considerable attention recently because of its wide use in modeling mobile networks (see \cite{TL14}, \cite{MS14a,MS14b} and \cite{KRX16}). The Ginibre point process is a special class of the Gibbs point process family and it exhibits a repulsion between the points. The repulsive character makes the cells more regular than those coming from a Poisson point process, hence in applications the Ginibre-Voronoi tessellation often fits better than the Poisson-Voronoi tessellation (see \cite{Rider04}, \cite{LCH90} and \cite{Goldman10}). 

The Ginibre point process is defined through the factorial moment measures. For a locally finite process~${\bm Y}$ on a Polish space~$S$, the~$n$-th order \emph{factorial moment measure}~$\nu^{(n)}$ of~${\bm Y}$  is defined by the relation (see \cite[pp.~109--110]{Kallenberg83})

\bes{
& \mean \left[ \int_{S^n}f(y_1,\dots,y_n) {\bm Y}(dy_1)\left({\bm Y}-\delta_{y_1}\right)(dy_2)\ldots \left({\bm Y}-\sum_{i=1}^{n-1}\delta_{y_i}\right)(dy_n)  \right]\\
&\qquad=\int_{S^n}f(y_1,\dots,y_n)  \nu^{(n)}(dy_1,\dots,dy_n),
}
where~$f$ ranges over all Borel measurable functions~$h:\ S^n\rightarrow [0,\infty)$. The Ginibre point process on the complex plane~$\C$ is defined as follows.

\begin{defi}
We say the point process~$\bX$ on the complex plane~$\C$~$(\cong\R^2)$ is the Ginibre point process if its factorial moment measures are given by
\be{
\nu\t n (dx_1, \ldots, dx_n) = {\rho\t n(x_1,\ldots, x_n) dx_1\dots dx_n},\ n\ge 1,
}
where~$\rho\t n(x_1,\ldots, x_n)$ is the determinant of the~$n\times n$ matrix with~$(i,j)$th entry 
\be{
	K(x_i,x_j)
		=\frac 1\pi e^{-\frac{1}{2}(|x_i|^2+|x_j|^2)}
			e^{ x_i\bar x_j}.
}
Here and in the sequel, $\bar x$ and~$|x|$ are the complex conjugate and modulus of~$x$.
\end{defi}

The Ginibre point process has the mean measure~$\bmu(dx)=\frac1\pi dx$. \cite{Goldman10} stated that the Palm process~$\bX_x$ of the Ginibre point process~$\bX$ at the location~$x$ satisfies
\ben{\bX\stackrel{d}{=}(\bX_x\setminus\{x\})\cup\{x+Z\},\label{70}
}
where~$\stackrel{d}{=}$ stands for `equals in distribution', $Z=(Z_1,Z_2\sqrt{-1})$ with~$(Z_1,Z_2)$ having bivariate normal on~$\R^2$ with mean~$(0,0)$ and covariance matrix~$\begin{bmatrix}1/2&0\\ 0&1/2\end{bmatrix}$. That is, the Palm process~$\bX_x$ can be obtained by removing a point from the process which is Gaussian distributed from~$x$ and then adding~$x$ to~$\bX$. It is still an open problem 
to know how~$Z$ is correlated with~$\bX_x\setminus\{x\}$ (see \cite[Problem~2, p.~27]{Goldman10}).

As \cite{SY13} (see also \cite{XY15}), we consider the window~$Q_\lambda\coloneqq \{(s_1,s_2\sqrt{-1}):\ -0.5\sqrt{\lambda}\le s_1,s_2\le 0.5\sqrt{\lambda}\}\subset\C$. For a realization~$\bx$ of~$\bX$ and~$x\in\bx$, let 
$\cc(x,\bx)$ be the set of every point in~$\C$ whose (Euclidean) distance to~$x$ is less than or equal to its distance to any other point of~$\bx$. 
The set~$\cc(x,\bx)$ is called the \emph{Voronoi cell} centered at~$x$ and the collection of~$\cc(x,\bx)$, $x\in\bx$, is called the \emph{Voronoi tessellation} induced by~$\bx$. 

Note that when the Voronoi cell centers are close to the boundary of
$Q_\lambda$, our definition of Voronoi cells is slightly different from that of \cite{PY01} (see also \cite{BY05}, \cite{SY13} and \cite{XY15}). This is because the Voronoi cells defined by \cite{PY01} do not satisfy the translation invariant property which is a crucial condition for obtaining the central limit theorems of the Voronoi tessellation statistics.\footnote{This minor issue was noted by \cite{Penrose07}, and we thank J. Yukich for bringing this to our attention.}

If we define the random measure
\be{
 \Xi(dx)=L(x,\bX)\bX(dx),
}
where~$L(x,\bX)\coloneqq L_\lambda(x,\bX)$ is one half the total edge length of the \emph{finite} length edges (hence we exclude all infinite edges) in the cell~$\cc(x,\bX)$, then
the total edge length of the Ginibre-Voronoi tessellation induced by~$\bX$ with centers in~$\bX\cap Q_\lambda$ can be written as 
\be{
 {{\cl(\lambda)}}\coloneqq |\Xi|=\int_{Q_\lambda}L(x,\bX)\bX(dx).
}

\begin{thm}\label{thm4} Let~$B^2=\var({{\cl(\lambda)}})$ and~$W=({{\cl(\lambda)}}-\mean {{\cl(\lambda)}})/B$. We have
\ben{\lim_{\lambda\to\infty}\lambda^{-1} \mean {{\cl(\lambda)}}\in(0,\infty), \ \ \ 
\lim_{\lambda\to\infty}\lambda^{-1}B^2\in(0,\infty)\label{71}
} 
and
\ben{
\dk({\law}(W),\cn(0,1))=\bigo\left(\lambda^{-1/2}\ln\lambda\right).
\label{72}
}
\end{thm}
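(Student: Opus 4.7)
The plan is to apply Theorem~\ref{thm2} with a Palm coupling of~$\bX$ and~$\bX_\alpha$ constructed from~\eq{70}. For each~$\alpha \in Q_\lambda$, let~$\bX_\alpha$ denote the Palm process at~$\alpha$, and realize~$\bX$ on the same probability space by removing~$\alpha$ from~$\bX_\alpha$ and inserting an independent Gaussian-shifted point~$\alpha + Z_\alpha$. In this coupling,~$\Xi$ and~$\Xi_\alpha$ agree outside a neighborhood of the substitution site, so that~$Y_\alpha = |\Xi_\alpha| - |\Xi|$ is determined purely by the local reconfiguration of Voronoi cells around~$\alpha$ and~$\alpha + Z_\alpha$.

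Before tackling~\eq{72}, I would establish~\eq{71}. For the mean, the Palm identity~\eq{19} together with~$\bmu(dx) = \frac{1}{\pi}dx$ gives~$\mean\cl(\lambda) = \frac{1}{\pi}\int_{Q_\lambda}\mean L(x,\bX_x)\,dx$; by the translation invariance of the infinite-volume Palm law (up to boundary effects of order~$\sqrt{\lambda}$), the integrand converges to a positive constant away from~$\partial Q_\lambda$, yielding~$\mean\cl(\lambda) \asymp \lambda$. The variance lower bound~$B^2 \gtrsim \lambda$ is more delicate and exploits the Gaussian decay of Ginibre correlations together with stabilization-type arguments for tessellation statistics, as developed in~\cite{SY13} and~\cite{XY15}.

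For~\eq{72}, I would fix a truncation radius~$r_\lambda = C\sqrt{\ln\lambda}$ with~$C$ sufficiently large and localize~$Y_\alpha$ to the ball of radius~$r_\lambda$ around~$\alpha$. The key inputs are: (i)~the Voronoi cells at~$\alpha$ and~$\alpha + Z_\alpha$ both have diameter at most~$r_\lambda$ with probability~$1 - \bigo(\lambda^{-p})$ for any fixed~$p$, a consequence of the very fast hole-probability decay of the Ginibre process; (ii)~$|Z_\alpha| \le r_\lambda$ with the same precision. Under this localization, for pairs with~$|\alpha-\beta| > 2r_\lambda$, the random variables~$Y_\alpha$ and~$Y_\beta$ depend on essentially disjoint portions of~$\bX$ and are effectively uncorrelated via the Gaussian factorization of Ginibre~$k$-point correlation functions. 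The integrals defining~$r_1', r_4', r_5'$ in Theorem~\ref{thm2} thus reduce to pairs in~$\{(\alpha,\beta): |\alpha-\beta|\le 2r_\lambda\}$, of Lebesgue measure~$\bigo(\lambda r_\lambda^2) = \bigo(\lambda\ln\lambda)$. Combined with~$B^2 \asymp \lambda$ and finite high moments of~$L(\alpha,\bX)$ (standard from determinantal estimates), the dominant contribution comes from~$r_4'$ and is of order~$\bigo(r_\lambda^2/\sqrt{\lambda}) = \bigo(\lambda^{-1/2}\ln\lambda)$.

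The main obstacle will be rigorously controlling the rare events on which the localization fails: one must verify that their polynomially-small probability is enough to offset the worst-case magnitude of~$|Y_\alpha|$, which requires high-moment bounds on~$L(\alpha,\bX)$ derived via determinantal identities and Gaussian tail bounds on~$|Z_\alpha|$. A secondary technical point is handling boundary effects at~$\partial Q_\lambda$, which contribute a term of order~$\bigo(\lambda^{-1/2})$ that is absorbed into the stated rate; the translation-invariant definition of cells adopted here is chosen precisely to keep these boundary contributions benign.
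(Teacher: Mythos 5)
Your outline matches the paper's proof step for step: (71) is cited to \cite{SY13} and \cite{XY15}; for (72) you choose a stabilization/localization radius of order $\sqrt{\ln\lambda}$ (the paper defines $T_x$ via twelve congruent sectors and gets $\IP[T_x>t]\le 12e^{-t^2/12}$), establish void-probability and uniform high-moment bounds on $|Y_\alpha|$, and split the covariance integrals in $r_1',r_4',r_5'$ into near pairs (which yield the $\ln\lambda$ factor, over a domain of area $\bigo(\lambda\ln\lambda)$) and far pairs (negligible on the good events). Two points where your account diverges from the paper's are worth flagging. First, your far-pair decorrelation appeals to Gaussian decay of the Ginibre $k$-point correlation functions, whereas the paper asserts that $Y_x$ and $Y_y$ are \emph{conditionally independent} given $U_xU_y$ once $|x-y|>20\sqrt{3\ln\lambda}$ and then expands the covariance term by term; since the Ginibre process is not Poisson and its restrictions to disjoint windows are not independent, either mechanism needs a concrete decoupling estimate that the sketch does not supply, and you should not treat disjoint dependence regions as automatically giving vanishing covariance. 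Second, the uniform moment bound $\IE|Y_x|^k\le C(k)$ is not obtained from generic ``determinantal estimates'': the paper first bounds the total edge-length change by $2\pi\bigl[T_x\bX(B(x,3T_x))+T_{Z+x}\bX(B(x+Z,3T_{Z+x}))\bigr]$ using Lemma~\ref{lem2} and the localization result of McGivney--Yukich, and then controls $\IE\{T_0^k\bX(B(0,3T_0))^k\}$ through the Georgii--Nguyen--Zessin characterization of Gibbs processes together with the void-probability and factorial-moment bounds of Lemma~\ref{lem1}. With those two steps filled in, your proposed argument is the paper's.
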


\begin{re} The Ginibre-Voronoi tessellation is a special case of the Gibbs-Voronoi tessellations
studied by \cite{XY15}. Theorem~2.3 of \cite{XY15} gives 
\be{
 \dk({\law}(W),\cn(0,1))=\bigo\left(\lambda^{-1/2}(\ln\lambda)^4\right),
}
which is slightly worse than~\eq{72}.
\end{re}

\begin{re}
The error estimate for the total edge length of the Ginibre-Voronoi tessellation is also valid for the more general class of~$\alpha$-Ginibre point processes with~$0<\alpha<1$. As a matter of fact, an~$\alpha$-Ginibre point process can be constructed by ``deleting,
independently and with probability~$1-\alpha$, each point of the Ginibre point process and then applying the homothety of ratio~$\sqrt{\alpha}$ to the remaining points in
order to restore the intensity of the process'' (\cite{Goldman10}). Hence, for~$\alpha$-Ginibre Voronoi tessellations,  except notational complexity, our proof goes through without any difficulty. 
\end{re}

\begin{re} We do not know if the bound of~\eq{72} is of the correct order. 
\end{re}

To prove Theorem~\ref{thm4}, we need the following lemmas. We note that the estimate of the void probability \eq{73}, albeit very simple, is not new and better estimates were given by \cite[Lemma~1.7 in Supplement]{BYY19}.

\begin{lma}\label{lem1} For~$A\subset \C$ with the area~$|A|$, we have
\ben{\prob[\bX(A)=0]\le e^{-|A|/\pi}\label{73}}
and 
\ben{ 
\mean[\bX(A)^l]\le 2^{\frac12l(l-1)}[1\vee(|A|/\pi)]^l\label{74}
}
for all~$l\in\N\coloneqq \{1,2,\dots\}$.
\end{lma}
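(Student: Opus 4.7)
The plan is to exploit the determinantal structure of the Ginibre point process. Because $K(x,y)$ is Hermitian and positive semi-definite, as witnessed by the expansion $K(x,y)=\sum_{n\ge 0}\phi_n(x)\overline{\phi_n(y)}$ with $\phi_n(x)=x^n e^{-|x|^2/2}/\sqrt{\pi n!}$, the restriction of $K$ to $A\times A$ defines a trace-class self-adjoint integral operator $K_A$ on $L^2(A)$ with eigenvalues $\{\lambda_j\}\subset (0,1]$ and trace $\sum_j\lambda_j=\int_A K(x,x)\,dx=|A|/\pi$. Both bounds then follow from short calculations based on this spectral picture together with the standard determinantal formulas.

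For~\eq{73} I would use the Fredholm determinant formula $\prob[\bX(A)=0]=\det(I-K_A)=\prod_j(1-\lambda_j)$ (equivalently, the Hough--Krishnapur--Peres--Vir\'ag representation $\bX(A)\stackrel{d}{=}\sum_j \xi_j$ with independent $\xi_j\sim\mathrm{Bernoulli}(\lambda_j)$). The elementary inequality $1-u\le e^{-u}$ then yields $\prob[\bX(A)=0]\le \exp(-\sum_j\lambda_j)=e^{-|A|/\pi}$, which is \eq{73}.

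For~\eq{74} I would route through the factorial moments. The determinantal joint-intensity formula reads
\be{
\E\bclc{\bX(A)(\bX(A)-1)\cdots(\bX(A)-l+1)}=\int_{A^l}\det\bclr{K(x_i,x_j)}_{i,j=1}^l\,dx_1\cdots dx_l,
}
and Hadamard's inequality for positive semi-definite matrices bounds the integrand by $\prod_i K(x_i,x_i)=\pi^{-l}$, so the $l$th factorial moment is at most $(|A|/\pi)^l$. Expanding ordinary powers via Stirling numbers of the second kind, $n^l=\sum_{k=1}^l S(l,k)\,n(n-1)\cdots(n-k+1)$, I obtain
\be{
\mean \bX(A)^l \le \sum_{k=1}^l S(l,k)\,(|A|/\pi)^k\le B_l\,[1\vee(|A|/\pi)]^l,
}
where $B_l=\sum_k S(l,k)$ is the $l$th Bell number; the last inequality treats the cases $|A|/\pi\ge 1$ and $|A|/\pi<1$ separately.

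It remains to show $B_l\le 2^{l(l-1)/2}$, which I would prove by induction on $l$ from Bell's recurrence $B_l=\sum_{k=0}^{l-1}\binom{l-1}{k}B_k$: the inductive hypothesis $B_k\le 2^{k(k-1)/2}\le 2^{(l-1)(l-2)/2}$ for $k\le l-1$, combined with $\sum_{k=0}^{l-1}\binom{l-1}{k}=2^{l-1}$, yields $B_l\le 2^{(l-1)(l-2)/2}\cdot 2^{l-1}=2^{l(l-1)/2}$. No serious obstacle arises; the only step worth flagging is the Hadamard bound, which relies on the Hermitian positive semi-definiteness (not merely the self-adjointness) of $K$, and the rest is essentially book-keeping.
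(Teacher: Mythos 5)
Your proof is correct, and it takes a genuinely different route from the paper's. Where you exploit the determinantal structure head-on — the Fredholm determinant identity $\prob[\bX(A)=0]=\det(I-K_A)=\prod_j(1-\lambda_j)$ together with $1-u\le e^{-u}$ for \eq{73}, and Hadamard's inequality $\det(K(x_i,x_j))\le\prod_i K(x_i,x_i)=\pi^{-l}$ on the factorial-moment integrand plus a Stirling-number/Bell-number expansion for \eq{74} — the paper instead works entirely through the Palm coupling \eq{70}, which gives a monotone realization with $\bX_x(A)\le\bX(A)+1$ a.s. For \eq{73} the paper then sets $g(\theta)=\mean\{(1-\theta)^{\bX(A)}\}$, uses the Palm identity to show $g'(\theta)\le-g(\theta)\bmu(A)$, and integrates the resulting differential inequality; for \eq{74} it uses the same coupling and the crude bound $(a+b)^{l-1}\le 2^{l-2}(a^{l-1}+b^{l-1})$ to get a recursion in $l$. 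Your argument buys generality (it applies verbatim to any determinantal process with a projection kernel of bounded diagonal) and cleaner constants (the Bell number $B_l$ is strictly smaller than $2^{l(l-1)/2}$ for $l\ge 3$), at the cost of importing standard facts about determinantal processes (the Fredholm determinant formula and the correlation-function/permanent identity) that the paper does not state; the paper's argument is self-contained given only \eq{70}, which is the Ginibre-specific ingredient it has already quoted from Goldman. Both are sound proofs of exactly the claimed bounds.
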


\begin{proof} For~$\theta\in(0,1)$, let~$g(\theta)=\mean \bclc{(1-\theta)^{\bX(A)}}$, then 
\ben{
g'(\theta)=-\mean \bclc{(1-\theta)^{\bX(A)-1}\bX(A)}=-\int_A\mean\bclc{(1-\theta)^{\bX_x(A)-1}}\bmu(dx).\label{75}
}
Using~\eq{70}, we can construct~$\bX$ and~$\bX_x$ together such that~$\bX_x(A)\le \bX(A)+1$ a.s. Hence, it follows from~\eq{75}
that
\be{
 g'(\theta)\le -\int_A\mean\bclc{(1-\theta)^{\bX(A)}}\bmu(dx)=-g(\theta)\bmu(A).
}
However, $g(0)=1$, we obtain~$g(\theta)\le e^{-\theta\bmu(A)}$, which implies
\be{
 \prob[\bX(A)=0]=g(1)\le e^{-\bmu(A)}=e^{-|A|/\pi},
}
as claimed in~\eq{73}. In terms of~\eq{74}, we can use the construction~$\bX_x(A)\le \bX(A)+1$ a.s. again and the inequality~$(a+b)^{l-1}\le 2^{l-2}(a^{l-1}+b^{l-1})$ for all~$a,b\ge 0$ to obtain 
\bes{
\mean[\bX(A)^l]&=\mean\int_A\bX(A)^{l-1}\bX(dx)=\int_A\mean[\bX_x(A)^{l-1}]\bmu(dx)\\
&\le \int_A\mean [(\bX(A)+1)^{l-1}]\bmu(dx)=\bmu(A)\mean[(\bX(A)+1)^{l-1}]\\
&\le2^{l-2}\bmu(A)\{\mean[\bX(A)^{l-1}]+1\}\le 2^{l-1}\bmu(A)[1\vee\mean(\bX(A)^{l-1})].
}
Hence, \eq{74} follows by induction. 
\end{proof}

\begin{figure}
\begin{minipage}{0.33\textwidth}\centering
\begin{tikzpicture}[scale=2]
\begin{scope}[draw/.append style={thick}]
\draw (0,0) -- (  0:1);
\draw (0,0) -- ( 30:1);
\draw (0,0) -- ( 70:1);
\draw (0,0) -- (100:1);
\draw (0,0) -- (160:1);
\draw (0,0) -- (200:1);
\draw (0,0) -- (250:1);
\draw (0,0) -- (300:1);
\end{scope}
\filldraw [black] (0,0) circle (1pt);
\node at (125:0.25) {0};
\draw (30:0.4) arc (30:70:0.4);
\node at (50:0.6) {$\theta_i$};
\end{tikzpicture}
\end{minipage}\hfill
\begin{minipage}{0.33\textwidth}\centering
\begin{tikzpicture}[scale=2]
\draw[thick] (0,0) -- (0:1.2);
\draw[thick] (0,0) -- (60:1.2);
\draw (0,0) -- (10:0.9);
\filldraw (10:0.9) circle (1pt) node [right] {$v$};
\draw (0,0) -- (40:0.6);
\filldraw (40:0.6) circle (1pt) node [above right] {$y$};
\filldraw (0,0) circle (1pt);
\node at (125:0.2) {0};
\node at (30:1.1) {\large$A_i$};
\draw (10:0.35) arc (10:40:0.35);
\node at (25:0.47) {$\theta$};
\end{tikzpicture}
\end{minipage}\hfill
\begin{minipage}{0.33\textwidth}\centering
\begin{tikzpicture}[scale=2.2]
\filldraw (0,0) circle (1pt);
\draw[dashed,thick] (1,0) arc (0:360:1);
\foreach \x in {0,30,60,90,120,150,180,210,240,270,300,330}
	\draw[thick] (0,0) -- (\x:1);
\node at (10:0.7) {$A_{x,i}(t)$};
\draw[latex-latex] (165:0.05) -- (165:1);
\node[fill=white,inner sep=2pt] at (164:0.52) {$t$};
\node[fill=white,inner sep=2pt] at (245:0.25) {$x$};
\end{tikzpicture}
\end{minipage} 
\vskip1\baselineskip
\begin{minipage}{0.33\textwidth}
\caption{\label{fig3}Isosceles triangles.}
\end{minipage}\hfill
\begin{minipage}{0.33\textwidth}
\caption{\label{fig4}$\theta\le\pi/3$.\\\strut}
\end{minipage}\hfill
\begin{minipage}{0.33\textwidth}
\caption{\label{fig5}Twelve disjoint congruent equal sectors.} %
\end{minipage}\hfill
\end{figure}

\begin{lma}\label{lem2} Suppose that rays emanating from the center~$0$ of~$\C$ divide~$\C$ 
into disjoint congruent isosceles triangles~$A_i$ with angles~$\theta_i$, $i=1,\dots,k$ (see Figure~\ref{fig3}), where~$k$ may be finite or infinity. If~$\bx\subset\C$ satisfies~$\bx\cap A_i\ne\emptyset$ and~$\theta_i\le \pi/3$ for all~$i=1,\dots,k$, then the Voronoi cell~$\cc(0,\bx)$ is contained in the disk~$B(0,d(\bx))$, where~$B(x,r)\coloneqq \{u\in\C:\ |u-x|\le r\}$ and~$d(\bx)\coloneqq \sup\{|u|:\ u\in\bx\}$.
\end{lma}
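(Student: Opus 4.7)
The plan is to show that every point $z\in\cc(0,\bx)$ satisfies $|z|\le d(\bx)$ by a direct argument combining the Voronoi-cell definition, the law of cosines, and the angular hypothesis $\theta_i\le \pi/3$. First I would observe that any such $z$ lies in (the closure of) some $A_i$, and by hypothesis we may pick a witness point $v\in \bx\cap A_i$. Since both $z$ and $v$ lie in the wedge of opening angle $\theta_i$ with apex at $0$, the angle $\theta$ between the vectors~$z$ and~$v$ satisfies $\theta\le\theta_i\le\pi/3$, hence $\cos\theta\ge 1/2$.

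Next, the Voronoi condition gives $|z|\le|z-v|$, and the law of cosines on the triangle with vertices $0$, $z$, $v$ yields
\be{
|z-v|^2=|z|^2+|v|^2-2|z||v|\cos\theta.
}
Combining these two relations and cancelling $|z|^2$, I get $|v|^2\ge 2|z||v|\cos\theta$. Assuming $|v|>0$ (the case $v=0$ is degenerate, as then $0\in\bx$ and the cell $\cc(0,\bx)$ reduces to $\{0\}$, making the claim trivial), this gives
\be{
|z|\le\frac{|v|}{2\cos\theta}\le |v|\le d(\bx),
}
where the middle inequality uses $\cos\theta\ge 1/2$. Since $z\in\cc(0,\bx)$ was arbitrary, this establishes $\cc(0,\bx)\subset B(0,d(\bx))$.

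I do not foresee a genuine obstacle: the argument is essentially the classical observation that once the sectors around~$0$ are narrow enough (opening angle $\le \pi/3$) for each to contain a witness of $\bx$, no point of the Voronoi cell in a given sector can reach past that sector's witness, because a rival of $0$ no further away than the witness already defeats $0$ when the angular separation is at most $\pi/3$. The only minor care needed is the boundary case where $z$ sits on a common edge of two triangles, but then $z$ belongs to the closure of either $A_i$ and the same law-of-cosines bound applies verbatim with any chosen witness.
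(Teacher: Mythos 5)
Your argument is correct and matches the paper's proof almost verbatim: both pick a witness $v\in\bx\cap A_i$ in the same wedge as $z$, use the Voronoi inequality $|z|\le|z-v|$ together with the law of cosines to extract $|v|\ge 2|z|\cos\theta\ge|z|$ (equivalently your $|z|\le|v|/(2\cos\theta)\le|v|$), and conclude $|z|\le d(\bx)$. The paper omits the $v=0$ side-remark, which is immaterial here since $0$ is the common apex and the disjoint wedges $A_i$ exclude it, but including it costs nothing.
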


\begin{proof} For each~$y\in \cc(0,\bx)$, there exists a triangle~$A_i$ such that~$y\in A_i$. Since
$\bx\cap A_i\ne\emptyset$, there exists a point~$v\in\bx\cap A_i$ (see Figure~\ref{fig4}) and it follows from~$y\in \cc(0,\bx)$ that~$|y|\le|y-v|$. This in turn implies
$|v|\ge2|y|\cos(\theta)\ge |y|$, that is, $y\in B(0,d(\bx))\coloneqq \{u:\ |u|\le d(\bx)\}$. This completes the proof.
\end{proof}

\begin{proof}[Proof of Theorem~\ref{thm4}] From the definition of the Ginibre point process, it is a Gibbs point process with a pair potential function (\cite{OS15}), hence~\eq{71}  is direct 
corollaries of Theorem~2.1 of \cite{SY13} and Theorem~1.1 of \cite{XY15}. Hence, it remains to show~\eq{72}.

We divide the disk~$B^o(x,t)\coloneqq \{u:\ 0<|u-x|<t\}$ into twelve disjoint congruent equal sectors~$A_{x,i}(t)$, $i=1,\dots,12$, (see Figure~\ref{fig5}) and define
\be{
 T_{x}\coloneqq \inf\{t:\ A_{x,i}(t)\cap \bX\ne \emptyset,\ i=1,\dots,12\},
}
(see \cite{MY99}  and \cite{PY01}).
The area of~$A_{x,i}(t)$ is~$|A_{x,i}(t)|=\pi t^2/{12}$, so it follows from Lemma~\ref{lem1} that 
\besn{\label{76}
\prob[T_{x}>t]&=\prob\left(\cup_{i=1}^{12}\{\bX(A_{x,i}(t))=0\}\right)\\
&\le 12\,\prob[\bX(A_{x,1}(t))=0]
\le 12e^{-t^2/{12}}.
}
Write~$Y_x\coloneqq |\Xi_x|-|\Xi|$, we establish that, for~$k\in\N$, 
\ben{
\mean |Y_x|^k\le C(k),\label{77}
}
where~$C(k)$ is a constant dependent on~$k$. 
In fact, for a bounded measurable function~$f$ on the space of all locally finite measures on~$\C$, a routine exercise of random measures gives
\bes{
	\mean f(\Xi_x)
		&=\frac{\mean\left\{ f\left(\sum_{y\in \bX\cap Q_\lambda}L(y,\bX)\right)L(x,\bX)\bX(dx)\right\}}{\mean\left\{ L(x,\bX)\bX(dx)\right\}}\\
		&=\frac{\mean\left\{ f\left(\sum_{y\in \bX_x\cap Q_\lambda}L(y,\bX_x)\right)L(x,\bX_x)\right\}}{\mean L(x,\bX_x)}.
}
That is, if the edges of the Voronoi tessellations are not affected by moving the point from~$x+Z$ to~$x$, then their distribution is not affected either. 
On the other hand,  adding a point at~$x$ does not affect the Voronoi cells centered at points outside~$B(x,3T_x)$ 
and deleting a point at~$x+Z$ does not affect the Voronoi cells centered at points outside~$B(x+Z,3T_{x+Z})$  (see \cite[Section 4]{MY99}). Therefore,
by Lemma~\ref{lem2}, 
the change of Voronoi edge lengths due to shifting a point at~$x+Z$ to~$x$ is bounded by~$2\pi [T_x\bX(B(x,3T_x))+T_{Z+x}\bX(B(x+Z,3T_{x+Z}))]$, so 
\besn{\label{78}
\mean |Y_x|^k&\le(2\pi)^k\mean\bclc{\bclr{T_x\bX(B(x,3T_x))+T_{Z+x}\bX(B(x+Z,3T_{x+Z}))}^k}\\
&\le 0.5(4\pi)^k\mean \left\{T_x^k\bX(B(x,3T_x))^k+T_{Z+x}^k\bX(B(x+Z,3T_{x+Z}))^k\right\}\\
&= (4\pi)^k\mean \left\{T_0^k\bX(B(0,3T_0))^k\right\},
}
where the second inequality follows from the fact that~$(a+b)^k\le 2^{k-1}(a^k+b^k)$ for all~$a,b\ge 0$ and the equality 
holds because~$T_{Z+x}\bX(B(x+Z,3T_{x+Z}))$ and~$T_x\bX(B(x,3T_x))$ have the same distribution as that of~$T_0\bX(B(0,3T_0))$.
However,
\besn{\label{79}
&\mean \left\{T_0^k\bX(B(0,3T_0))^k\right\}\\
&\qquad\le12\mean\int_{\C}|y|^k\bX(B(0,3|y|))^k\\
&\kern8em\times\bone\left[\{\bX(A_{0,1}(|y|))=0\}\cap\bigcap_{i=2}^{12}\{\bX(A_{0,i}(|y|))\ge 1\}\right]\bX(dy)\\
&\qquad\le12\mean\int_{\C}|y|^k\bX(B(0,3|y|))^k\bone[\bX(A_{0,1}(|y|))=0]\bX(dy).
}
We apply the Georgii-Nguyen-Zessin
integral characterization of Gibbs point processes (see \cite[Chapter 6.4]{MW}) to obtain that the
conditional probability of observing an extra point of~$\bX$ in the volume element~$dy$, given that configuration without
that point, equals
~$\pi^{-1}\exp( - \beta \Delta^{{\Psi}}( \{y\}, \bX
) )dy\le \pi^{-1}dy$, where~$\Delta^{{\Psi}}( \{y\}, \bX
)\ge 0$ is the local energy function and~$1/\beta\ge 0$ is the temperature (see \cite[Section~1.1]{XY15}). Hence, it follows from
\eq{79} that
\bes{
&\mean \left\{T_0^k\bX(B(0,3T_0))^k\right\}\\
&\qquad\le\frac{12}\pi\mean\int_{\C}|y|^k(\bX+\delta_y)(B(0,3|y|))^k\\
&\kern11em\times\bone[(\bX+\delta_y)(A_{0,1}(|y|))=0]\exp(- \beta \Delta^{{\Psi}}( \{y\}, \bX
) )dy\\
&\qquad\le\frac{12}\pi\mean\int_{\C}|y|^k(\bX+\delta_y)(B(0,3|y|))^k\bone[(\bX+\delta_y)(A_{0,1}(|y|))=0]dy\\
&\qquad=24\mean\int_0^\infty t^k(\bX+\delta_y)(B(0,3t))^k\bone[\bX(A_{0,1}(t))=0]dt\\
&\qquad\le24\cdot 2^{k-1}\mean\int_0^\infty t^k\{\bX(B(0,3t))^k+1\}\bone[\bX(A_{0,1}(t))=0]dt\\
&\qquad\le 24\cdot 2^{k-1}\int_0^\infty t^k\sqrt{\mean\clc{\bX(B(0,3t))^{2k}}\prob[\bX(A_{0,1}(t))=0]}\,dt\\
&\qquad \ \ \ +24\cdot 2^{k-1}\int_0^\infty t^k\prob[\bX(A_{0,1}(t))=0]\,dt\\
&\qquad\le 12\times2^{0.5k(2k+1)}\int_0^\infty \max\bclc{1,9^kt^{2k}}t^ke^{-t^2/24}dt+24\cdot 2^{k-1}\int_0^\infty t^k e^{-t^2/12}dt,
}
where the last inequality follows from~\eq{73}. This, together with 
\eq{78}, yields the bound in \eq{77}.

We now apply Theorem~\ref{thm2} to establish~\eq{72}.
\medskip

\noindent{\it The estimate of~$r_1'$.} To simplify the notation, we write~$Y_x'=Y_x\bone[|Y_x|\le B]$. 
Set 
\be{
 U_x=\{T_x\le 4\sqrt{3\ln\lambda}\}\cap \{T_{Z+x}\le 4\sqrt{3\ln\lambda}\}\cap \{|Z|\le 2\sqrt{3\ln \lambda}\}.
}
Using~\eq{76}, we have
\besn{\label{80}
	\prob[U_x^c]&=\prob[U_y^c]\\
	&\le\prob[T_x>4\sqrt{3\ln\lambda}]+\prob[T_{x+Z}>4\sqrt{3\ln\lambda}]+\prob[|Z|>2\sqrt{3\ln\lambda}]
\\
&=\bigo(\lambda^{-4}).
}
This, together with~\eq{77}, ensures
\besn{
&\cov\left(Y_x'\bone[U_x^c],Y_y'\right)\\
&\qquad\le\sqrt{\var\left(Y_x'\bone[U_x^c]\right)\var\left(Y_y'\right)}\le \sqrt{\mean\clc{(Y_x')^2\bone[U_x^c]}\mean\clc{(Y_y')^2}}\\
&\qquad\le \sqrt{\mean\clc{Y_x^2\bone[U_x^c]}\mean Y_y^2}\le\sqrt{\sqrt{\mean Y_x^4 \prob[U_x^c]}\mean Y_y^2 }
	=\bigo(\lambda^{-1}).\label{81}
}
Similarly, we can derive
\ben{
	\cov\left(Y_x'\bone[U_x],Y_y'\bone[U_y^c]\right)
	=\bigo(\lambda^{-1})\label{82}
}
and 
\besn{
\cov\left(Y_x'\bone[U_x],Y_y'\bone[U_y]\right)
	& \le\sqrt{\var\left(Y_x'\bone[U_x]\right)\var\left(Y_y'\bone[U_y]\right)} \\
 &\le \sqrt{\mean Y_x^2 \mean Y_y^2 }
=\bigo(1).\label{83}
}
Assume~$|x-y|> 20\sqrt{3\ln\lambda}$. Conditional on~$U_xU_y$, 
$Y_x$ is independent of~$Y_y$, hence
\be{
 \mean(Y_x'Y_y'|U_xU_y)=\mean(Y_x'|U_xU_y)\mean(Y_y'|U_xU_y).
}
Using~\eq{77} and~\eq{80}, we obtain
\be{
 |\mean(Y_x'\bone[U_xU_y^c])|\le\{\mean [(Y_x'\bone[U_x])^2]\prob[U_y^c]\}^{1/2}\le \{\mean [Y_x^2]\prob[U_y^c]\}^{1/2}=\bigo(\lambda^{-2}).
}
The same argument gives that
all~$\mean(Y_x'\bone[U_xU_y])$, $\mean(Y_y'\bone[U_xU_y])$ and~$\mean(Y_x'\bone[U_x])$ are of order~$\bigo(1)$ and~$\mean(Y_y'\bone[U_x^cU_y])=\bigo(\lambda^{-2})$, hence
\besn{\label{84}
&\cov(Y_x'\bone[U_x],Y_y'\bone[U_y])\\
&\qquad=
\mean(Y_x'|U_xU_y)\mean(Y_y'|U_xU_y)\prob[U_xU_y]-\mean(Y_x'\bone[U_xU_y])\mean(Y_y'\bone[U_xU_y])\\
&\qquad\quad-\mean(Y_x'\bone[U_xU_y^c])\mean(Y_y'\bone[U_xU_y])-\mean(Y_x'\bone[U_x])\mean(Y_y'\bone[U_x^cU_y])\\
&\qquad=
\mean(Y_x'\bone[U_xU_y])\mean(Y_y'\bone[U_xU_y])[\prob[U_xU_y]]^{-1}(1-\prob[U_xU_y])\\
&\qquad\quad+\bigo(\lambda^{-2})+\bigo(\lambda^{-2})\\
&\qquad=\bigo\left(\lambda^{-2}\right),
}
where the last equation follows from~\eq{80} since~$1-\prob[U_xU_y]\le \prob[U_x^c]+\prob[U_y^c]=o(\lambda^{-4})$.
Now,
\bea{
&\iint_{\Gamma^2} \cov(Y_x',Y_y')\Lambda(dx)\Lambda(dy)\\
&\qquad=\iint_{\Gamma^2} \cov(Y_x'\bone[U_x^c],Y_y')\Lambda(dx)\Lambda(dy)\\
&\qquad\quad+\iint_{\Gamma^2} \cov(Y_x'\bone[U_x],Y_y'\bone[U_y^c])\Lambda(dx)\Lambda(dy)\\
&\qquad\quad+\iint_{|x-y|>20\sqrt{3\ln\lambda}} \cov(Y_x'\bone[U_x],Y_y'\bone[U_y])\Lambda(dx)\Lambda(dy)\\
&\qquad\quad+\iint_{|x-y|\le 20\sqrt{3\ln\lambda}} \cov(Y_x'\bone[U_x],Y_y'\bone[U_y])\Lambda(dx)\Lambda(dy).
}
Using~\eq{81} for the first term, \eq{82} for the second term, \eq{84} for the third term and~\eq{83} for the last term, we have
\be{
 \iint_{\Gamma^2} \cov(Y_x',Y_y')\Lambda(dx)\Lambda(dy)=\bigo(\lambda\ln\lambda).
}
This gives the estimate of
$r_1'$ as
\ben{
r_1'=\bigo(\lambda^{-1})\left[\iint_{\Gamma^2} \cov(Y_x',Y_y')\Lambda(dx)\Lambda(dy)\right]^{1/2}
=\bigo\left(\lambda^{-1/2}\sqrt{\ln\lambda}\right).\label{85}
}

\medskip
\noindent\emph{The estimate of~$r_2'+r_3'$.} Applying~\eq{77} gives
\ben{
r_2'+r_3'\le B^{-3}\int_\Gamma \mean Y_x^2\Lambda(dx)=\bigo\left(\lambda^{-1/2}\right).
\label{86}
}

\medskip
\noindent\emph{The estimate of~$r_4'$.} To simplify the notation, we write 
\be{
\zeta_{x,t}=\begin{cases}
\bone[1\ge\Delta_x>t>0] &\text{for~$t>0$,}\\
\bone[-1\le\Delta_x<t<0] &\text{for~$t<0$.}
\end{cases}
}
If~$|x-y|>20\sqrt{3\ln\lambda}$, we have
\besn{
	\cov\left(\zeta_{x,t},\zeta_{y,t}\right)&=\cov\left(\zeta_{x,t}\bone[U_x^c],\zeta_{y,t}\right)
+\cov\left(\zeta_{x,t}\bone[U_x],\zeta_{y,t}\bone[U_y^c]\right)\\
	&\quad+\cov\left(\zeta_{x,t}\bone[U_x],\zeta_{y,t}\bone[U_y]\right).\label{87}
}
We apply~\eq{80} to obtain
\ben{\cov\left(\zeta_{x,t}\bone[U_x^c],\zeta_{y,t}\right)
\le\prob[U_x^c]=\bigo\left(\lambda^{-4}\right).\label{88}
}
Likewise, 
\be{\cov\left(\zeta_{x,t}\bone[U_x],\zeta_{y,t}\bone[U_y^c]\right)=\bigo\left(\lambda^{-4}\right).
}
Given~$U_xU_y$, $\zeta_{x,t}$ is independent of~$\zeta_{y,t}$, hence 
\be{
 \mean(\zeta_{x,t}\zeta_{y,t}|U_xU_y)=\mean(\zeta_{x,t}|U_xU_y)\mean(\zeta_{y,t}|U_xU_y).
}
This ensures that we can expand the last term of~\eq{87} into
\besn{
&\cov\left(\zeta_{x,t}\bone[U_x],\zeta_{y,t}\bone[U_y]\right)\\
&\qquad=\mean(\zeta_{x,t}|U_xU_y)\mean(\zeta_{y,t}|U_xU_y)\prob[U_xU_y]\left(1-\prob[U_xU_y]\right)\\
&\qquad\quad-\mean(\zeta_{x,t}|U_x)\prob[U_x]\mean (\zeta_{y,t}|U_x^cU_y)\prob[U_x^cU_y]\\
&\qquad\quad-\mean(\zeta_{x,t}|U_xU_y^c)\prob[U_xU_y^c]\mean (\zeta_{y,t}|U_xU_y)\prob[U_xU_y]\\
&\qquad=\bigo\left(\prob[U_x^c]+\prob[U_y^c]\right)
	=\bigo\left(\lambda^{-4}\right),\label{89}
}
again, by~\eq{80}. Combining estimates~\eq{88}--\eq{89}, we obtain from~\eq{87} that, when~$|x-y|>20\sqrt{3\ln\lambda}$,
\ben{\cov\left(\zeta_{x,t},\zeta_{y,t}\right)=\bigo\left(\lambda^{-4}\right).\label{90}
}
This implies
\besn{
&\int_0^1\iint_{\Gamma^2}\cov\left(\zeta_{x,t},\zeta_{y,t}\right)\Lambda(dx)\Lambda(dy)dt\\
&\qquad=\int_0^1\iint_{|x-y|>20\sqrt{3\ln\lambda}}\cov\left(\zeta_{x,t},\zeta_{y,t}\right)\Lambda(dx)\Lambda(dy)dt\\
&\qquad\quad+\int_0^1\iint_{|x-y|\le20\sqrt{3\ln\lambda}}\cov\left(\zeta_{x,t},\zeta_{y,t}\right)\Lambda(dx)\Lambda(dy)dt\\
&\qquad\le \bigo\left(\lambda^{-2}\right)+\int_0^1\iint_{|x-y|\le20\sqrt{3\ln\lambda}}\mean\zeta_{x,t}\Lambda(dx)\Lambda(dy)dt
\\
&\qquad=\bigo\left(\lambda^{-2}\right)+B^{-1}\iint_{|x-y|\le20\sqrt{3\ln\lambda}}\mean|Y_x|\Lambda(dx)\Lambda(dy)
\\
&\qquad\le \bigo\left(\lambda^{1/2}\ln\lambda\right),\label{91}
}
where the last inequality is due to~\eq{77}.
Similarly, we can also establish
\ben{
\int_{-1}^0\iint_{\Gamma^2}\cov\left(\zeta_{x,t},\zeta_{y,t}\right)\Lambda(dx)\Lambda(dy)dt
=\bigo\left(\lambda^{1/2}\ln\lambda\right).\label{92}
}
Adding~\eq{91} and~\eq{92} gives
\ben{
r_4'=\bigo\left(\lambda^{-1/2}\ln\lambda\right).\label{93}
}

\medskip
\noindent\emph{The estimate of~$r_5'$.} We make use of~\eq{90} to get
\besn{
&\int_0^1\iint_{\Gamma^2}t\cov\left(\zeta_{x,t},\zeta_{y,t}\right)\Lambda(dx)\Lambda(dy)dt\\
&\qquad=\int_0^1\iint_{|x-y|>20\sqrt{3\ln\lambda}}t\cov\left(\zeta_{x,t},\zeta_{y,t}\right)\Lambda(dx)\Lambda(dy)dt\\
&\qquad\quad+\int_0^1\iint_{|x-y|\le20\sqrt{3\ln\lambda}}t\cov\left(\zeta_{x,t},\zeta_{y,t}\right)\Lambda(dx)\Lambda(dy)dt\\
&\qquad\le \bigo\left(\lambda^{-2}\right)+\int_0^1\iint_{|x-y|\le20\sqrt{3\ln\lambda}}t\mean\zeta_{x,t}\Lambda(dx)\Lambda(dy)dt
\\
&\qquad=\bigo\left(\lambda^{-2}\right)+\frac{B^{-2}}2\iint_{|x-y|\le20\sqrt{3\ln\lambda}}\mean|Y_x|^2\Lambda(dx)\Lambda(dy)
\\
&\qquad\le \bigo\left(\ln\lambda\right),\label{94}
}
where, again, the last inequality follows from~\eq{77}. Correspondingly, we can deduce the following bound:
\ben{
\int_{-1}^0\iint_{\Gamma^2}t\cov\left(\zeta_{x,t},\zeta_{y,t}\right)\Lambda(dx)\Lambda(dy)dt=\bigo\left(\ln\lambda\right).\label{95}
}
Combining~\eq{94} and~\eq{95} yields
\ben{
r_5'=\bigo\left(\lambda^{-1/2}\sqrt{\ln\lambda}\right).\label{96}
}

Finally, we collect all the estimates in~\eq{85}, \eq{86},
~\eq{93} and~\eq{96} to achieve the bound~\eq{72}, as claimed. 
\end{proof}

\subsection{The total edge length of Poisson-Voronoi tessellations}

The Poisson-Voronoi tessellations have been studied extensively since \cite{AB93}. For normal approximation of the total edge length of Poisson-Voronoi tessellations, an error bound of the optimal order was established by \cite{LSY17} using the Malliavin-Stein approach. In this subsection, we demonstrate that Theorem~\ref{thm2} can be utilized to derive an error bound of the same order.

Similar to the previous subsection, we consider $\bX$ as a Poisson point process on $\R^2$ with mean measure~$\bmu(dx)=dx$ and set $Q_\lambda=\{(x_1,x_2):\ -\sqrt{\lambda}/2\le x,y\le \sqrt{\lambda}/2\}$. For the ease of reading, we briefly recap a few essential terminologies. For a realization~$\bx$ of~$\bX$ and~$x\in\bx$, we define 
$\cc(x,\bx)$ as the set of every point in~$\R^2$ whose (Euclidean) distance to~$x$ is less than or equal to its distance to any other point of~$\bx$. 
The collection of~$\cc(x,\bx)$, $x\in\bx$, is called the \emph{Poisson-Voronoi tessellation} induced by the realization~$\bx$ of~$\bX$. 
Again, we write $L(x,\bX)\coloneqq L_\lambda(x,\bX)$ as one half the total edge length of the \emph{finite} length edges in the cell~$\cc(x,\bX)$, then the total edge length of the Poisson-Voronoi tessellation induced by~$\bX$ with centers in~$\bX\cap Q_\lambda$ can be summarized as 
\be{
 {{\cl(\lambda)}}\coloneqq \int_{Q_\lambda}L(x,\bX)\bX(dx).
}

\begin{thm} Let~$B^2=\var({{\cl(\lambda)}})$ and~$W=({{\cl(\lambda)}}-\mean {{\cl(\lambda)}})/B$. We have
\ben{\lim_{\lambda\to\infty}\lambda^{-1} \mean {{\cl(\lambda)}}\in(0,\infty), \ \ \ 
\lim_{\lambda\to\infty}\lambda^{-1}B^2\in(0,\infty)\label{add1}
} 
and
\ben{
\dk({\law}(W),\cn(0,1))=\bigo\left(\lambda^{-1/2}\right).
\label{add2}
}
\end{thm}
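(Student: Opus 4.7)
The plan is to mirror the Ginibre-Voronoi proof while exploiting the fact that by Slivnyak's theorem the Palm distribution of a Poisson process equals $\bX_x \stackrel{d}{=} \bX\cup\{x\}$. This eliminates the Gaussian shift $Z$ of~\eqref{70}, which was the source of the $\sqrt{\ln\lambda}$ factor in~\eqref{72}, and therefore makes it natural to expect the optimal rate $\lambda^{-1/2}$.

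The limits~\eqref{add1} follow from the stabilization results for Poisson-Voronoi tessellations in \cite{PY01} and \cite{BY05} (or directly from \cite{LSY17}). For the Berry-Esseen bound, I would use the coupling $\bX_x=\bX\cup\{x\}$, so that $Y_x=|\Xi_x|-|\Xi|$ is the change in total edge length caused by inserting $x$. Lemma~\ref{lem2} continues to apply, giving $|Y_x|\le 2\pi T_x\bX(B(x,3T_x))$. With the Poisson intensity $dx$ in place of $dx/\pi$, the sector argument used for Lemma~\ref{lem1} produces $\prob[T_x>t]\le 12e^{-\pi t^2/12}$, and the Mecke formula then yields $\mean Y_x^k\le C(k)$ uniformly in $x$ and $\lambda$, as in~\eqref{77}.

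The crucial improvement over the Ginibre proof is that $Y_x\bone[T_x\le M]$ is measurable with respect to $\bX\cap B(x,3M)$, so when $|x-y|>6M$ the truncations $Y_x\bone[T_x\le M]$ and $Y_y\bone[T_y\le M]$ are independent by the independent scattering property of the Poisson process --- no auxiliary conditioning on a Gaussian shift is required. Choosing $M=|x-y|/7$ and decomposing $\cov(Y_x,Y_y)$ into a truncated-truncated term, two mixed terms and a remainder-remainder term, the last three are controlled by Cauchy-Schwarz together with the uniform fourth moment of $Y_x$ and the Gaussian tail of $T_x$, yielding an exponential-in-squared-distance decay of the form
\be{
|\cov(Y_x',Y_y')|\le C_1 e^{-c_1|x-y|^2}, \qquad |x-y|\ge 1,
}
and an analogous bound for the indicator covariances $\cov(\zeta_{x,t},\zeta_{y,t})$. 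Integrating these estimates over $Q_\lambda^2$ against $\Lambda(dx)\Lambda(dy)$ gives a total of $\bigo(\lambda)$ (rather than $\bigo(\lambda\ln\lambda)$ as in~\eqref{85}), so that $r_1'$, $r_4'$ and $r_5'$ are all $\bigo(\lambda^{-1/2})$ without any logarithmic factor; the term $r_2'+r_3'$ is $\bigo(\lambda^{-1/2})$ directly from the uniform second moment of $Y_x$, exactly as in~\eqref{86}. Theorem~\ref{thm2} then gives~\eqref{add2}.

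The main technical obstacle is establishing the exponential-in-$|x-y|^2$ decay for the indicator covariances $\cov(\zeta_{x,t},\zeta_{y,t})$, because $\zeta_{x,t}$ depends on $\Delta_x=Y_x/B$ at the Berry-Esseen scale $\lambda^{-1/2}$ while stabilization occurs at the scale $T_x$. A careful version of the decomposition used for~\eqref{87}--\eqref{89} should still deliver the required decay once one also exploits the uniform moment bound on $Y_x$ to control the contribution of configurations where $\Delta_x$ falls in a narrow interval around $t$; the remaining estimates are then routine variants of the calculations leading to~\eqref{85}--\eqref{96}.
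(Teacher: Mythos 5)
Your plan is correct and gives the claimed $O(\lambda^{-1/2})$ rate, but it takes a genuinely different route through the covariance estimates than the paper does. The paper conditions \emph{exactly} on $T_\alpha=s_1,\,T_\beta=s_2$, writes down the marginal density $\prob(T_\alpha\in ds)$ and a bound on the joint law via \eqref{PV02}--\eqref{PV03}, exploits conditional independence of $Y_\alpha,Y_\beta$ when $3(s_1+s_2)<|\alpha-\beta|$, and integrates polynomial conditional-moment bounds against the Gaussian tails, producing covariance bounds with polynomial prefactors such as \eqref{PV04}. You instead truncate on the events $\{T_x\le M\}$ with a \emph{distance-dependent} threshold $M=|x-y|/7$, so the truncated parts are exactly independent by the independent-scattering property, and dispatch the three remainder terms by Cauchy--Schwarz against $\prob[T_x>M]\le 12e^{-\pi M^2/12}$ and the uniform fourth-moment bound. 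This is closer in spirit to the Ginibre argument in Section 3.3 (where the fixed-radius event $U_x$ plays the role of your $\{T_x\le M\}$), and the variable cutoff is exactly what removes the $\sqrt{\ln\lambda}$ there. Your version avoids computing the density of $T_\alpha$ and any joint-density bound, at the cost of somewhat worse constants in the exponent; that trade-off is immaterial. The one place your write-up is too terse is the indicator covariances: the phrase about ``$\Delta_x$ falling in a narrow interval around $t$'' obscures the real mechanism, which is that $\int_0^1\zeta_{x,t}\,dt=(\abs{\Delta_x}\wedge 1)\bone[\Delta_x>0]$, so that, e.g.,
\be{
\int_0^1\E\bclc{\zeta_{x,t}\bone[T_x\le M]\,\zeta_{y,t}\bone[T_y>M]}\,dt
\le \E\bclc{(\abs{\Delta_x}\wedge 1)\bone[T_y>M]}
\le \sqrt{\E\Delta_x^2\,\prob[T_y>M]}
= O\bclr{\lambda^{-1/2}e^{-cM^2}},
}
and it is this Cauchy--Schwarz step against $\E\Delta_x^2=O(B^{-2})$ that supplies the extra $\lambda^{-1/2}$ factor, playing the same role as the paper's conditional bound $\E(\abs{\Delta_\alpha}\mid T_\alpha=s_1)=O(B^{-1}(s_1^3+1))$ in \eqref{PV03.1}--\eqref{PV05}. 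With that step spelled out, $r_4'$ and $r_5'$ come out $O(\lambda^{-1/2})$ as you assert, and the proof is complete.
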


\begin{proof} The claim \eq{add1} can be found in \cite{AB93}, hence it suffices to show \eq{add2}. To this end, we write $\Xi(dx)=L(x,\bX)\bX(dx)$ {and apply Theorem~\ref{thm2}.}
As observed by \cite[Section 4]{MY99}, adding a point at~$\alpha$ does not affect the Voronoi cells centered at points outside~$B(\alpha,3T_\alpha)$, while $Y_\alpha$ (respectively $Y_\beta$) is determined by the configuration $B(\alpha, 3T_\alpha)\cap \bX$ (respectively $B(\beta, 3T_\beta)\cap \bX$). Thus $Y_\alpha$ and $Y_\beta$ are conditionally independent given~$T_\alpha$ and $T_\beta$ with $|\alpha-\beta|>3(T_\alpha+T_\beta)$, which implies
\besn{
&\cov(Y_\alpha',Y_\beta')\\
&=\int_0^\infty \int_0^\infty\mean (Y_\alpha' Y_\beta'|T_\alpha=s_1,T_\beta=s_2)\prob(T_\alpha\in ds_1,T_\beta\in ds_2)\\
&\qquad-\int_0^\infty \mean (Y_\alpha'|T_\alpha=s_1)\prob(T_\alpha\in ds_1)\int_0^\infty \mean (Y_\beta' |T_\beta=s_2)\prob(T_\beta\in ds_2)\\
&=\iint_{3(s_1+s_2)\ge |\alpha-\beta|} \mean (Y_\alpha' Y_\beta' |T_\alpha=s_1,T_\beta=s_2)\prob(T_\alpha\in ds_1,T_\beta\in ds_2)\\
&\qquad -\iint_{3(s_1+s_2)\ge |\alpha-\beta|} \mean (Y_\alpha' |T_\alpha=s_1)\mean (Y_\beta' |T_\beta=s_2)\prob(T_\alpha\in ds_1)\prob(T_\beta\in ds_2).\label{PV01}}
However, direct verification ensures
\ben{\prob(T_\alpha\in ds)=2\pi\left(1-e^{-\frac{\pi s^2}{12}}\right)^{11}e^{-\frac{\pi s^2}{12}}sds.\label{PV02}}
By checking the relationship of the event $\{T_\beta\in ds_2\}$ and various possible cases of the event $\{T_\alpha=s_1\}$, we obtain
\be{
\prob(T_\beta\in ds_2|T_\alpha=s_1)\le \frac{e^{-\frac{\pi s_2^2}{12}}\frac{2\pi s_2}{12}ds_2}{1-e^{-\frac{\pi s_1^2}{12}}},}
which, together with \eq{PV02}, implies
\besn{&\prob(T_\alpha\in ds_1,T_\beta\in ds_2)=\prob(T_\beta\in ds_2|T_\alpha=s_1)\prob(T_\alpha\in ds_1)\\
&\qquad\le \frac{\pi^2}{3} s_1s_2e^{-\frac{\pi(s_1^2+s_2^2)}{12}}ds_1ds_2. \label{PV03}
}
Since the change of the total edge lengths as the result of adding a point at $\alpha$ (respectively $\beta$) can be bounded by $2\pi T_\alpha$ (respectively $2\pi T_\beta$), if we use $C$ to represent a constant independent of $\lambda$, $\alpha$ and $\beta$, whose value may vary from one line to another, then we have the following crude estimates for $Y'$ and the same estimates also hold for $Y$ in place of $Y'$:
\besn{
&|\mean(Y_\alpha' |T_\alpha=s_1,T_\beta=s_2)|\le  
C(s_1^3+1),\\
&|\mean(Y_\alpha' |T_\alpha=s_1)|\le  C(s_1^3+1),\\
&|\mean(Y_\alpha' Y_\beta' |T_\alpha=s_1,T_\beta=s_2)|\le C(s_1^3s_2^3+1),\\
&|\mean(Y_\alpha'^2|T_\alpha=s_1,T_\beta=s_2)|\le C(s_1^4+1),\\
&|\mean(Y_\alpha'^2|T_\alpha=s_1)|\le C(s_1^4+1).\label{PV03.1}
} 
Combining \eq{PV01}, \eq{PV02}, \eq{PV03} and \eq{PV03.1} gives
\besn{
|\cov(Y_\alpha' ,Y_\beta' )|&\le C\iint_{3(s_1+s_2)\ge |\alpha-\beta|}(s_1^4s_2^4+1) e^{-\frac{\pi(s_1^2+s_2^2)}{12}}ds_1ds_2\\
&\le C(|\alpha-\beta|^3+1)e^{-\frac{\pi|\alpha-\beta|^2}{432}}.\label{PV04}}
Using \eq{PV04}, we have
\besn{r_1'&\le O\left(\lambda^{-1}\right)\left(\iint_{Q_\lambda\times Q_\lambda}\cov(Y_\alpha',Y_\beta')\Lambda(d\alpha) \Lambda(d\beta)\right)^{1/2}\\
&\le O\left(\lambda^{-1}\right)\left(\iint_{Q_\lambda\times Q_\lambda}(|\alpha-\beta|^3+1)e^{-\frac{\pi|\alpha-\beta|^2}{432}}\Lambda(d\alpha) \Lambda(d\beta)\right)^{1/2}=O(\lambda^{-1/2}).\label{PV06}}
For $r_2'$ and $r_3'$, we use \eq{PV02} and \eq{PV03.1} to obtain
\bea{&\mean (Y_\alpha^2)=\int_0^\infty \mean(Y_\alpha^2|T_\alpha=s)\prob(T_\alpha\in ds)\le 
\int_0^\infty C(s^4+1)e^{-\frac{\pi s^2}{12}}sds=C,\\
&\mean (Y_\alpha'^2)=\int_0^\infty \mean(Y_\alpha'^2|T_\alpha=s)\prob(T_\alpha\in ds)\le 
\int_0^\infty C(s^4+1)e^{-\frac{\pi s^2}{12}}sds=C,}
which implies
$$r_2'\le O\left(\lambda^{-3/2}\right)\int_{Q_\lambda}\mean (Y_\alpha'^2) \Lambda(d\alpha)= O\left(\lambda^{-1/2}\right)$$
and
$$r_3'\le O\left(\lambda^{-3/2}\right)\int_{Q_\lambda}\mean (Y_\alpha^2) \Lambda(d\alpha)=O\left(\lambda^{-1/2}\right).$$
For $r_4'$, when $3(s_1+s_2)<|\alpha-\beta|$, we have
\bea{&\prob(1\ge \Delta_\alpha>t>0,1\ge \Delta_\beta>t>0|T_\alpha=s_1,T_\beta=s_2)\\
&=\prob(1\ge \Delta_\alpha>t>0|T_\alpha=s_1)\prob(1\ge \Delta_\beta>t>0|T_\beta=s_2).}
Hence
\bea{
&\cov(\bone[1\ge \Delta_\alpha>t>0],\bone[1\ge \Delta_\beta>t>0])\\
&= \int_0^\infty\int_0^\infty\prob(1\ge \Delta_\alpha>t>0,1\ge \Delta_\beta>t>0|T_\alpha=s_1,T_\beta=s_2)\\
&\qquad\qquad\qquad\qquad\qquad\qquad\qquad\qquad\qquad\qquad\prob(T_\alpha\in ds_1,T_\beta\in ds_2)\\
&\ \ \ -\left(\int_0^\infty\prob(1\ge \Delta_\alpha>t>0|T_\alpha=s)\prob(T_\alpha\in ds)\right)^2\\
&= \iint_{3(s_1+s_2)\ge|\alpha-\beta|}\prob(1\ge \Delta_\alpha>t>0,1\ge \Delta_\beta>t>0|T_\alpha=s_1,T_\beta=s_2)\\
&\qquad\qquad\qquad\qquad\qquad\qquad\qquad\qquad\qquad\qquad\prob(T_\alpha\in ds_1,T_\beta\in ds_2)\\
&\ \ \ -\iint_{3(s_1+s_2)\ge|\alpha-\beta|}\prob(1\ge \Delta_\alpha>t>0|T_\alpha=s_1)\prob(1\ge \Delta_\beta>t>0|T_\beta=s_2)\\
&\qquad\qquad\qquad\qquad\qquad\qquad\qquad\qquad\qquad\qquad\prob(T_\alpha\in ds_1)\prob(T_\beta\in ds_2),}
which implies
\besn{
&\int_0^1|\cov(\bone[1\ge \Delta_\alpha>t>0],\bone[1\ge \Delta_\beta>t>0])|dt\\
&\le \iint_{3(s_1+s_2)\ge|\alpha-\beta|}\int_0^1\prob(1\ge \Delta_\alpha>t>0,1\ge \Delta_\beta>t>0|T_\alpha=s_1,T_\beta=s_2)dt\\
&\qquad\qquad\qquad\qquad\qquad\qquad\qquad\qquad\qquad\qquad\prob(T_\alpha\in ds_1,T_\beta\in ds_2)\\
&\ \ \ +\iint_{3(s_1+s_2)\ge|\alpha-\beta|}\int_0^1\prob(1\ge \Delta_\alpha>t>0|T_\alpha=s_1)\prob(1\ge \Delta_\beta>t>0|T_\beta=s_2)dt\\
&\qquad\qquad\qquad\qquad\qquad\qquad\qquad\qquad\qquad\qquad\prob(T_\alpha\in ds_1)\prob(T_\beta\in ds_2)\\
&\le 2\iint_{3(s_1+s_2)\ge|\alpha-\beta|,s_1<s_2}\mean(|\Delta_\alpha||T_\alpha=s_1,T_\beta=s_2)\prob(T_\alpha\in ds_1,T_\beta\in ds_2)\\
&\ \ \ +2\iint_{3(s_1+s_2)\ge|\alpha-\beta|,s_1<s_2}\mean(|\Delta_\alpha||T_\alpha=s_1)\prob(T_\alpha\in ds_1)\prob(T_\beta\in ds_2)\\
&\le O\left(\lambda^{-1/2}\right)\iint_{3(s_1+s_2)\ge|\alpha-\beta|,s_1<s_2}(s_1^3+1)\prob(T_\alpha\in ds_1,T_\beta\in ds_2)\\
&\ \ \ +O\left(\lambda^{-1/2}\right)\iint_{3(s_1+s_2)\ge|\alpha-\beta|,s_1<s_2}(s_1^3+1)\prob(T_\alpha\in ds_1)\prob(T_\beta\in ds_2)\\
&\le O\left(\lambda^{-1/2}\right)e^{-\frac{\pi|\alpha-\beta|^2}{432}},
\label{PV05}}
where the second last inequality is from \eq{PV03.1} and the last inequality is obtained as in \eq{PV04}. Thus, it follows from \eq{PV05} and the same argument for \eq{PV06} that
\besn{
&\left|\iint_{Q_\lambda\times Q_\lambda}\int_0^1\cov(\bone[1\ge \Delta_\alpha>t>0],\bone[1\ge \Delta_\beta>t>0])dt \Lambda(d\alpha)  \Lambda(d\beta)\right|\\
&\le O\left(\lambda^{-1/2}\right)\iint_{Q_\lambda\times Q_\lambda}e^{-\frac{\pi|\alpha-\beta|^2}{432}} \Lambda(d\alpha)  \Lambda(d\beta)\\
&=O\left(\lambda^{1/2}\right).\label{PV07}
}
Likewise, we can show that
\besn{
&\left|\iint_{Q_\lambda\times Q_\lambda}\int_{-1}^0\cov(\bone[-1\le \Delta_\alpha<t<0],\bone[-1\le \Delta_\beta<t<0])dt \Lambda(d\alpha)  \Lambda(d\beta)\right|\\
&=O\left(\lambda^{1/2}\right).\label{PV08}
}
Combining \eq{PV07} and \eq{PV08} gives $r_4'{=} O\left(\lambda^{-1/2}\right)$.
For $r_5'$, we can bound it in the same way as for $r_4'$. In fact, we replace \eq{PV05} with
\bea{
&\int_0^1t|\cov(\bone[1\ge \Delta_\alpha>t>0],\bone[1\ge \Delta_\beta>t>0])|dt\\
&\le \iint_{3(s_1+s_2)\ge |\alpha-\beta|}\int_0^1t\prob(1\ge \Delta_\alpha>t>0,1\ge \Delta_\beta>t>0|T_\alpha=s_1,T_\beta=s_2)dt\\
&\qquad\qquad\qquad\qquad\qquad\qquad\qquad\qquad\qquad\qquad\prob(T_\alpha\in ds_1,T_\beta\in ds_2)\\
&\ \ \ +\iint_{3(s_1+s_2)\ge |\alpha-\beta|}\int_0^1t\prob(1\ge \Delta_\alpha>t>0|T_\alpha=s_1)\prob(1\ge \Delta_\beta>t>0|T_\beta=s_2)dt\\
&\qquad\qquad\qquad\qquad\qquad\qquad\qquad\qquad\qquad\qquad\prob(T_\alpha\in ds_1)\prob(T_\beta\in ds_2)\\
&\le \iint_{3(s_1+s_2)\ge |\alpha-\beta|,s_1<s_2}\mean(\Delta_\alpha^2|T_\alpha=s_1,T_\beta=s_2)\prob(T_\alpha\in ds_1,T_\beta\in ds_2)\\
&\ \ \ +\iint_{3(s_1+s_2)\ge |\alpha-\beta|,s_1<s_2}\mean(\Delta_\alpha^2|T_\alpha=s_1)\prob(T_\alpha\in ds_1)\prob(T_\beta\in ds_2)\\
&\le O\left(\lambda^{-1}\right) \iint_{3(s_1+s_2)\ge |\alpha-\beta|,s_1<s_2}(s_1^4+1)\prob(T_\alpha\in ds_1,T_\beta\in ds_2)\\
&\ \ \ +O\left(\lambda^{-1}\right)\iint_{3(s_1+s_2)\ge |\alpha-\beta|,s_1<s_2}(s_1^4+1)\prob(T_\alpha\in ds_1)\prob(T_\beta\in ds_2)\\
&{=} O\left(\lambda^{-1}\right)e^{-\frac{\pi|\alpha-\beta|^2}{432}},
}
where the second last inequality is from \eq{PV03.1} and the last inequality is from a similar argument leading to \eq{PV04}. Therefore,
\besn{
&\left|\int_0^1\iint_{Q_\lambda\times Q_\lambda}t\cov(\bone[1\ge \Delta_\alpha>t>0],\bone[1\ge \Delta_\beta>t>0])dt \Lambda(d\alpha)  \Lambda(d\beta)\right|\\
&\le O\left(\lambda^{-1}\right)\iint_{Q_\lambda\times Q_\lambda}e^{-\frac{\pi|\alpha-\beta|^2}{432}} \Lambda(d\alpha)  \Lambda(d\beta)\\
&=O(1).
\label{PV10}}
The same reasoning can be adjusted to show that 
\besn{
&\left|\int_{-1}^0\iint_{Q_\lambda\times Q_\lambda}t\cov(\bone[-1\le  \Delta_\alpha<t<0],\bone[-1\le  \Delta_\beta<t<0])dt\Lambda(d\alpha)\Lambda(d\beta)\right|\\
&{=} O(1).
\label{PV11}}
The anticipated order of $r_5' = O\left(\lambda^{-1/2}\right)$ can be observed from \eq{PV10} and \eq{PV11}. {The proof is completed by observing that $r_i',\ 1\le i\le 5$, are all of the order $O(\lambda^{-1/2})$.}
\end{proof}

\section{Stein Couplings}\label{sec4}

The following notion was introduced by \cite{CR2010}. A triple of random variables~$(W, W', G)$ defined on the same probability space is called a \emph{Stein coupling} if 
\ben{ \label{97}
\E\clc{Gf(W')-Gf(W)} = \E\clc{ Wf(W)} 
}
for all absolutely continuous functions~$f$ with~$f(x) = \bigo(1 + |x|)$.  By taking~$f(x) = 1$, \eq{97} implies that~$\E W = 0$, and by taking~$f(w)=w$, \eq{97} implies that~$\Var W=\IE\clc{G(W'-W)}$; as usual, we assume that~$\Var W = 1$.

Now suppose~$(W,W',G)$ is a Stein coupling. Let~$\Delta = W' - W$, and let~$\mathcal{F}$ be a~$\sigma$-algebra with respect to which~$W$ is measurable.
Then 
\be{
\E \clc{Wf(W) }= \E\int_{-\infty}^\infty f'(W+t)\hat{K}(t)dt,
}
where 
\be{
\hat{K}(t)= \E\bclc{G\bclr{\bone[\Delta > t >0]-\bone[\Delta < t \le 0]}\given\mathcal{F}}.
}
One particular choice of~$\Kin$ and~$\Kout$ that turns out to be useful is
\bea{
	\hKin(t) &= \E\bclc{G\bclr{\bone[\Delta\wedge1 \geq t >0]-\bone[\Delta\vee(-1) \leq t \le 0]}\given\calF},\\
\hKout(t)&= \E\bclc{G[\bone\bclr{\Delta \geq t > \Delta\wedge1]-\bone[\Delta<t<\Delta\vee(-1)]}\given\calF},
} 
from which, via Theorem~\ref{thm1}, we can immediately deduce a bound on the normal approximation of~$W$ in terms of quantities involving only the Stein coupling. Whereas~$r_2$ and~$r_3$ are typically straightforward to bound, $r_1$, $r_4$ and~$r_5$ can be difficult to handle without further assumptions. The following result shows, however, that, by introducing additional auxiliary random variables, we can obtain upper bounds on these quantities that are far more tractable.

\begin{thm} \label{thm5} Let~$(W,W',G)$ be a Stein coupling;  set~$\Delta = W'-W$. Let~$(G',\Delta')$ be a conditionally independent copy of~$(G,\Delta)$ given~$\calF$, and let~$(G^*,\Delta^{\!*})$ be an unconditionally independent copy of~$(G,\Delta)$. Then
\bea{
\dk(\law(W),\mathcal{N}(0,1)) \le 9s_1+11s_2+ 5s_3 + 10s_4,
}
where
\bea{
	s_1 & = \bclr{\IE\bclc{\abs{GG'}(\abs{\Delta}\wedge 1) (\abs{\Delta'-\Delta^{\!*}}\wedge 2)+\abs{G(G'-G^*)}(\abs{\Delta}\wedge 1)(\abs{\Delta^{\!*}}\wedge 1)}}^{1/2},\\
	 s_2 & = \IE\clc{\abs{G}(\abs{\Delta}\wedge1)^2},\\
	s_3 & = \IE\clc{\abs{G} (\abs{\Delta}-1) \bone[\abs{\Delta} > 1]},\\
	s_4 & = \E\bclc{ \abs{GG'}(\abs{\Delta'-\Delta^{\!*}}\wedge 1)+\abs{G(G'-G^*)}(\abs{\Delta'}\wedge\abs{\Delta^{\!*}}\wedge 1)}.
}
\end{thm}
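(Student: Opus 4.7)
The plan is to apply Theorem~\ref{thm1} to the Stein coupling with the stated decomposition $\hat K=\hKin+\hKout$, and then bound each of the five error terms $r_1,\dots,r_5$ of Theorem~\ref{thm1} in terms of $s_1,\dots,s_4$. The Stein identity~\eq{1} itself follows from~\eq{97} by writing $Gf(W')-Gf(W)=G\int_{-\infty}^\infty f'(W+t)(\bone[\Delta>t>0]-\bone[\Delta<t\le 0])\,dt$ and conditioning on $\calF$ (with respect to which $W$ is measurable). Two of the five bounds are immediate: a pointwise estimate for $|\Kin(t)|$ together with $\int_0^a t\,dt=a^2/2$ gives $r_2\le\frac12 s_2$, and since $\hKout(t)$ vanishes unless $|\Delta|>1$ and $1<|t|\le|\Delta|$, integrating out $t$ produces $r_3\le s_3$.

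The remaining three quantities $r_1$, $r_4$ and $r_5$ all concern (functionals of) a conditional variance, which I would attack with the identity $\Var(\E[Y|\calF])=\E[Y(Y'-Y^{*})]$, where $Y'$ is built from the conditionally independent copy $(G',\Delta')$ and $Y^{*}$ from the unconditionally independent copy $(G^{*},\Delta^{*})$. For $r_1$, direct calculation gives $\int_{|t|\le 1}\hKin(t)\,dt=\E[Gh(\Delta)|\calF]$ with $h(x)\coloneqq(x\wedge 1)\vee(-1)$, so with $Y=Gh(\Delta)$ one has $r_1^2\le\Var(\E[Y|\calF])$; splitting $Y'-Y^{*}=G'(h(\Delta')-h(\Delta^{*}))+(G'-G^{*})h(\Delta^{*})$ and using $|h(x)|\le|x|\wedge 1$ and $|h(a)-h(b)|\le|a-b|\wedge 2$ bounds this by $s_1^2$. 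For $r_4$ and $r_5$, the identity applied at each $t$ yields $\Var(\hKin(t))=\E[\hat\xi_t(\hat\xi'_t-\hat\xi^{*}_t)]$ with $\hat\xi_t\coloneqq Gf_t(\Delta)$ and $f_t$ the signed indicator in $\hKin$; integrating in $t$ (unweighted for $r_4$, with weight $|t|$ for $r_5^2$) and splitting $\hat\xi'_t-\hat\xi^{*}_t$ analogously reduces everything to elementary integral identities such as $\int_{|t|\le 1}f_t(a)f_t(b)\,dt\le|a|\wedge|b|\wedge 1$ and, using the signed nature of $f_t$, $\bigl|\int_{|t|\le 1}f_t(a)(f_t(b)-f_t(c))\,dt\bigr|\le|b-c|\wedge 1$ (the $|t|$-weighted variant being $\le(|a|\wedge 1)(|b-c|\wedge 2)$ via $|t|\le|a|\wedge 1$ on the support of $f_t(a)$). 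These yield $r_4\le s_4$ and $r_5\le s_1$.

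Summing, $2r_1+11r_2+5r_3+10r_4+7r_5\le (2+7)s_1+\frac{11}{2}s_2+5s_3+10s_4\le 9s_1+11s_2+5s_3+10s_4$, which is the claimed bound. The hardest part is matching the precise form of $s_4$: a naive decomposition produces terms involving $|\Delta|\wedge|\Delta^{*}|$ instead of the $|\Delta'|\wedge|\Delta^{*}|$ that appears in $s_4$, which has to be handled by choosing the decomposition $\hat\xi'_t-\hat\xi^{*}_t=(G'-G^{*})f_t(\Delta')+G^{*}(f_t(\Delta')-f_t(\Delta^{*}))$ and invoking the conditional exchangeability of $(G,\Delta)$ and $(G',\Delta')$ given $\calF$.
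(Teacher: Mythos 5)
Your overall strategy is the same as the paper's: apply Theorem~\ref{thm1} to the Stein coupling with the given $\hKin/\hKout$ split, dispose of $r_2,r_3$ by direct integration, and control $r_1,r_4,r_5$ through the conditional-variance identity $\Var(\E^\calF Y)=\E\{Y(Y'-Y^*)\}$. The bounds $r_1\le s_1$, $r_2\le \tfrac12 s_2$, $r_3\le s_3$ are sound, and the closing arithmetic ($2+7=9$ for $s_1$, $11\cdot\tfrac12\le 11$ for $s_2$, etc.) is correct.

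The gap is in your treatment of $r_4$, and it recurs in $r_5$. You split $\hat\xi_t'-\hat\xi_t^*=(G'-G^*)f_t(\Delta')+G^*\bclr{f_t(\Delta')-f_t(\Delta^*)}$ and integrate against $\hat\xi_t=Gf_t(\Delta)$; this yields
\[
r_4\le \E\bclc{|G(G'-G^*)|(|\Delta|\wedge|\Delta'|\wedge 1)}+\E\bclc{|GG^*|(|\Delta'-\Delta^*|\wedge 1)},
\]
which is not $s_4$: the cross term carries $|GG^*|$ rather than $|GG'|$, and the other term has the symmetric factor $|\Delta|\wedge|\Delta'|$ rather than $|\Delta'|\wedge|\Delta^*|$. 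The exchangeability you invoke does not repair this: swapping $(G,\Delta)\leftrightarrow(G',\Delta')$ turns $\E\{|GG^*|(|\Delta'-\Delta^*|\wedge 1)\}$ into $\E\{|G'G^*|(|\Delta-\Delta^*|\wedge 1)\}$, not into $\E\{|GG'|(|\Delta'-\Delta^*|\wedge 1)\}$, and leaves $|\Delta|\wedge|\Delta'|$ fixed; and since $(G^*,\Delta^*)$ is only assumed unconditionally independent of $(G,\Delta)$ (not of $\calF$ or of $(G',\Delta')$), there is no hypothesis that makes the swap distribution-preserving in the presence of $(G^*,\Delta^*)$. The paper instead uses the opposite splitting $a'u-a^*v=a'(u-v)+(a'-a^*)v$ (Lemma~\ref{lem5}$(ii)$), which after integrating against $f_t(\Delta)$ produces $\E\{|GG'|(|\Delta'-\Delta^*|\wedge 1)\}+\E\{|G(G'-G^*)|(|\Delta|\wedge|\Delta^*|\wedge 1)\}$ — correct $|GG'|$ factor, but $|\Delta|\wedge|\Delta^*|$ where the stated $s_4$ has $|\Delta'|\wedge|\Delta^*|$. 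That residual mismatch appears to be a typo in the theorem statement, so your instinct to worry about it was right; but the alternative decomposition you propose makes things worse (both factors now disagree), and for $r_5$ it likewise puts the $|\Delta'-\Delta^*|$ factor next to $|GG^*|$ while $s_1^2$ has $|GG'|$. Use the paper's decomposition and flag the $\Delta$ versus $\Delta'$ typo instead of trying to reverse-engineer it.
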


\begin{proof} Let~$\IE^\calF$ denote conditional expectation with respect to~$\calF$, and let~$\bar{x} = (x\wedge 1)\vee(-1)$. We apply Theorem~\ref{thm1}; using Lemma~\ref{lem5}$(i)$ in the last inequality,
\bea{
	r_1^2 & = \bclr{\IE\babs{\IE^\calF(G\-\bar{\Delta})-\IE(G\-\bar{\Delta})}}^2\\
	& \leq \Var\,\IE^\calF\clr{G\bar{\Delta}}
	 = \IE\bclc{\clr{\IE^\calF\clr{G\bar{\Delta}}}^2-\clr{\IE\clc{G\bar{\Delta}}}^2} \\
	& = \IE\bclc{GG'\bar{\Delta}\bar{\Delta}' - GG^*\bar{\Delta}\bar{\Delta}^{\!*}} 
	 \leq \IE\bclc{\abs{G\bar{\Delta}} \abs{G'\bar{\Delta}' - G^*\bar{\Delta}^{\!*} }} \\
	& \leq \IE\bclc{\abs{G\bar{\Delta}}\bclr{\abs{G'}(\abs{\Delta'-\Delta^{\!*}}\wedge2) + \abs{G'-G^*}(\abs{\Delta^{\!*}}\wedge 1)}} = s_1^2.
}
The expressions for~$s_2$ and~$s_3$ are easily obtained from~$r_2$ and~$r_3$.
Letting~$I_x^\pm(t) = \bone[x\wedge1 \geq t >0]-\bone[x\vee(-1) \leq t \le 0]$ and using Lemma~\ref{lem5}$(ii)$,
\bes{
	r_4 & = \E\int_{|t|\le 1}\bclr{\hKin(t)-\Kin(t)}^2dt \\
	& = \E\int_{|t|\le 1}\bclr{\E^\calF\clc{GI_t^\pm(\Delta)} -\E\clc{GI_t^\pm(\Delta)}}^2dt \\
	& = \E\int_{|t|\le 1}\bclr{\E^\calF\clc{GI_t^\pm(\Delta)}}^2 -\bclr{\E\clc{GI_t^\pm(\Delta)}}^2dt \\
	& = \E\int_{|t|\le 1}\bclr{\E^\calF\clc{GG'I_t^\pm(\Delta)I_t^\pm(\Delta')} - \E\clc{GG^*I_t^\pm(\Delta)I_t^\pm(\Delta^{\!*})}} dt \\
	& = \E\int_{|t|\le 1}\bclr{ GG'I_t^\pm(\Delta)I_t^\pm(\Delta') - GG^*I_t^\pm(\Delta)I_t^\pm(\Delta^{\!*})} dt \\
	& = \E\bclc{ GG'(\abs{\Delta}\wedge\abs{\Delta'}\wedge 1)\bone[\Delta\Delta'>0] - GG^*(\abs{\Delta}\wedge\abs{\Delta^{\!*}}\wedge 1)\bone[\Delta\Delta^{\!*}>0]} \\
	& \leq \E\bclc{ \abs{GG'}(\abs{\Delta'-\Delta^{\!*}}\wedge 1)} + \IE\bclc{\abs{G}\abs{G'-G^*}(\abs{\Delta}\wedge\abs{\Delta^{\!*}}\wedge 1)} = s_4.
}
Finally, similar to the estimate of~$r_4$, using Lemma~\ref{lem5}$(iii)$,
\bea{
	r_5^2 &= \E\int_{|t|\le 1}|t|\bclr{\hKin(t)-\Kin(t)}^2dt\\
	&= \E\int_{|t|\le 1}\abs{t}\bclr{ GG'I_t^\pm(\Delta)I_t^\pm(\Delta') - GG^*I_t^\pm(\Delta)I_t^\pm(\Delta^{\!*})} dt\\        
	& = \ahalf\E\bclc{ GG'(\abs{\Delta}\wedge\abs{\Delta'}\wedge 1)^2\bone[\Delta\Delta'>0] - GG^*(\abs{\Delta}\wedge\abs{\Delta^{\!*}}\wedge 1)^2\bone[\Delta\Delta^{\!*}>0]} \\
	& \leq \E\bclc{ \abs{GG'}(\abs{\Delta}\wedge 1)(\abs{\Delta'-\Delta^{\!*}}\wedge 1) + \ahalf\abs{G}\abs{G'-G^*}(\abs{\Delta}\wedge\abs{\Delta^{\!*}}\wedge 1)^2}\leq s_1^2.\tag*{\qedhere}
}
\end{proof}	

Many couplings in the literature, such as the exchangeable pair and the size-bias coupling, can be formulated as Stein couplings, to which Theorem \ref{thm5} can be applied. In what follows, we construct Stein couplings for local dependence and additive functionals in classical occupancy problems and apply Theorem \ref{thm5} to obtain new errors bounds, of which the former improves a result of \cite{CS2004}.

\subsection{Local dependence}\label{sec5}

Consider a sequence of centered random variables~$X_1,\dots,X_n$ which are locally dependent in the following sense. For each~$1\leq i\leq n$, there is a set~$A_i\subset\{1,\dots,n\}$ such that~$X_i$ and~$(X_j)_{j\in A^c_i}$ are independent of each other. Moreover, for each~$1\leq i\leq n$, there is a set~$B_i\subset\{1,\dots,n\}$ such that~$A_i\subset B_i$ and such that~$(X_j)_{j\in A_i}$ and~$(X_j)_{j\in B^c_i}$ are independent of each other. Let~$W = \sum_{i=1}^n X_i$,
$Y_i=\sum_{j\in A_i}X_j$ and assume that~$\Var W = 1$.

This and related types of dependency structures were extensively studied by \cite{CS2004}; in particular, it is shown that under the above assumptions and if there is~$2<\rho\leq 4$ such that~$\E\abs{X_i}^\rho+\E\abs{Y_i}^\rho\leq \theta^p$ for some~$\theta>0$ and all~$1\leq i\leq n$, then
\ben{ \label{99}
\dk(\law(W),\mathcal{N}(0,1))
\leq \clr{13+11\kappa}n\theta^{3\wedge \rho}+2.5 \theta^{p/2}\sqrt{\kappa n},
}
where 
\ben{\label{100}
	\kappa = \sup_{1\leq i\leq n}\babs{\bclc{1\leq j\leq n\,:\, B_j\cap B_i\neq \emptyset}};
}
see \cite[Theorem 2.2]{CS2004}.

We can easily reproduce~\eq{99} (up to constants) and further improve it by means of Theorem~\ref{thm5}.

\begin{thm} \label{thm6}
Let~$X_1,\dots,X_n$ be centered random variables with dependency neighborhoods~$A_i$ and~$B_i$ as described above, set~$W=\sum_{i=1}^n X_i$, and assume that $\Var(W) = 1$, and assume that there is~$\theta>0$ such that~$\IE\{\abs{X_i}^\rho\}\vee\IE\{\abs{Y_i}^\rho\}\leq \theta^\rho$ for some~$2< \rho\leq 4$ and each~$1\leq i\leq n$. Then 
\be{
\dk(\law(W),\mathcal{N}(0,1))
\leq \clr{16+67\lambda}n\theta^{3\wedge \rho}+28 \theta^{\rho/2}\sqrt{\lambda n},
}
where
\ben{\label{101}
	\lambda = \sup_{1\leq i\leq n}\babs{\bclc{1\leq j\leq n\,:\, A_j\cap B_i\neq \emptyset}}.
}
\end{thm}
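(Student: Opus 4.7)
The plan is to construct a simple index-randomisation Stein coupling for $W$ and feed it into Theorem~\ref{thm5}, using the intermediate identities from that proof to bound the $r_1$, $r_4$ and $r_5$ terms directly rather than the looser $s_1$, $s_4$. Take $I$ uniform on $\{1,\dots,n\}$ and independent of $(X_1,\dots,X_n)$, and set $W'=W-Y_I$ and $G=-nX_I$. The Stein coupling identity~\eq{97} will follow from the centredness of $X_i$ together with the independence of $X_i$ and $W-Y_i$: averaging the identity $\E[X_i f(W-Y_i)]=0$ uniformly over $i$ produces $\E[Gf(W')-Gf(W)]=\E[Wf(W)]$. For the auxiliary variables required by Theorem~\ref{thm5}, we take $\calF=\sigma(X_1,\dots,X_n)$, construct $(G',\Delta')$ from a second uniform index $I'$ independent of $I$ on the same $X_i$'s, and construct $(G^*,\Delta^*)$ from a fully independent copy $(X^*_i)_{i=1}^n$ with an independent index $I^*$.

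The easy error terms reduce to $s_2=\sum_i \E[|X_i|(|Y_i|\wedge 1)^2]$ and $s_3=\sum_i \E[|X_i|(|Y_i|-1)\bone[|Y_i|>1]]$. Combining H\"older's inequality with the pointwise bounds $(|Y_i|\wedge 1)^2\le|Y_i|^{\rho-2}$ (valid for $\rho\le 4$) and $(|Y_i|-1)\bone[|Y_i|>1]\le|Y_i|^{\rho-1}$ (valid for $\rho\ge 2$), and invoking Lyapunov to sharpen when $\rho>3$, will give $s_2,s_3\le n\theta^{3\wedge\rho}$. The substantive work is in $s_1$ and $s_4$, which we will bound not from their definitions but through the intermediate identities appearing in the proof of Theorem~\ref{thm5}: one has $r_1^2 \le \Var(\E^\calF[G\bar{\Delta}]) = \Var(\sum_i X_i\bar{Y}_i)$ with $\bar{Y}_i=(Y_i\wedge 1)\vee(-1)$, together with analogous exact expressions for $r_4$ and $r_5^2$ as differences of expectations of $GG'$- and $GG^*$-type quantities. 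Since $X_i\bar{Y}_i$ is measurable with respect to $(X_k)_{k\in A_i}$, the second-neighbourhood hypothesis $(X_k)_{k\in A_i}\perp(X_k)_{k\in B_i^c}$ forces $\Cov(X_i\bar{Y}_i,X_j\bar{Y}_j)=0$ whenever $A_j\cap B_i=\emptyset$. Hence only $n\lambda$ pairs contribute, and bounding each contributing covariance by a constant multiple of $\theta^\rho$ through three-variable H\"older will yield $r_1=\bigo(\theta^{\rho/2}\sqrt{n\lambda})$; the same argument, applied to the Theorem~\ref{thm5} representations of $r_4$ and $r_5^2$, will produce $r_4=\bigo(n\lambda\theta^{3\wedge\rho})$ and $r_5=\bigo(\theta^{\rho/2}\sqrt{n\lambda})$.

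The delicate point will be verifying that the \emph{one-sided} condition $A_j\cap B_i=\emptyset$ already annihilates each of the relevant covariances and differences of expectations; this weaker requirement is precisely what replaces $\kappa$ with $\lambda$ and represents the improvement over~\eq{99}, and it depends on the asymmetric decomposition afforded by the Stein coupling rather than the symmetric sum-of-variables decomposition used in~\cite{CS2004}. Once the $r_i$-bounds above are in hand, substituting them into~\eq{2} of Theorem~\ref{thm1} and collecting constants will produce the stated bound $(16+67\lambda)n\theta^{3\wedge\rho}+28\theta^{\rho/2}\sqrt{\lambda n}$.
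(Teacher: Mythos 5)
Your overall blueprint matches the paper's proof: you build the same index--randomisation Stein coupling $W'=W-Y_I$, $G=-nX_I$, take $\calF=\sigma(X_1,\dots,X_n)$, and use a second uniform index to form $(G',\Delta')$. But there is a genuine structural divergence in how you handle the unconditionally independent copy $(G^*,\Delta^{*})$ and the downstream error terms. The paper builds $(G^*,\Delta^{*})$ by \emph{reusing} the second index $J$ and substituting the resampled variables $X^*_J,Y^*_J$ only on the event $\{A_J\cap B_I\neq\emptyset\}$; this deliberate coupling forces $(G',\Delta')=(G^*,\Delta^{*})$ off that event, so the $s_1,s_4$ quantities in Theorem~\ref{thm5} (which involve $\Delta'-\Delta^*$ and $G'-G^*$) automatically localise to $O(n\lambda)$ pairs. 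Your construction --- a fully independent copy $(X^*_i)$ with an independent fresh index $I^*$ --- is simpler to set up, but with it $\Delta'-\Delta^*$ and $G'-G^*$ never vanish and $s_1,s_4$ themselves are of order $n^2$. You correctly sidestep this by ignoring $s_1,s_4$ and working with the intermediate identities $r_1^2\le\Var\bclr{\E^\calF\{G\bar\Delta\}}=\Var\bclr{\sum_iX_i\bar Y_i}$ and the exact $GG'$/$GG^*$ representations of $r_4,r_5^2$; since $X_i\bar Y_i$ (and, more generally, $(X_i,Y_i)$) is $\sigma\bclr{(X_k)_{k\in A_i}}$-measurable, the \emph{one-sided} condition $A_j\cap B_i=\emptyset$ already kills each contributing covariance and each $GG'-GG^*$ difference, which recovers the $n\lambda$ count and the $\lambda$-versus-$\kappa$ improvement. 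This is a legitimate and, arguably, cleaner route: it trades the paper's delicate coupling for a transparent two-step independence argument, and would in fact produce slightly smaller constants.

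There is, however, a concrete gap in your treatment of $s_2$. The pointwise substitution $(|Y_i|\wedge 1)^2\le|Y_i|^{\rho-2}$ followed by H\"older and Lyapunov gives
\[
\E\bclc{|X_i|\,|Y_i|^{\rho-2}}\le\bclr{\E|X_i|^\rho}^{1/\rho}\bclr{\E|Y_i|^{(\rho-2)\rho/(\rho-1)}}^{(\rho-1)/\rho}\le\theta\cdot\theta^{\rho-2}=\theta^{\rho-1},
\]
which is strictly weaker than the required $\theta^{3\wedge\rho}$ whenever $\theta<1$ and $\rho<4$: for $\theta\asymp n^{-1/2}$ and $\rho=2.5$, say, $n\theta^{\rho-1}=n^{1/4}\to\infty$ whereas the theorem asserts $n\theta^{\rho}=n^{-1/4}\to 0$. (One can check directly that $\E\{|X_i||Y_i|^{\rho-2}\}\le\theta^{\rho}$ is false in general by taking $X_i=\pm c$ and $Y_i=X_i$ with $c<1$.) The mistake is discarding the truncation $\wedge 1$ before applying the inequality. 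The correct move, as in the paper, is to apply Young's inequality directly to $|X_i|(|Y_i|\wedge 1)^2$ with conjugate exponents $(\rho,\rho/(\rho-1))$ for $2<\rho\le 3$ (and $(3,3/2)$ for $3\le\rho\le 4$), and only then use $(|Y_i|\wedge 1)^a\le|Y_i|^\rho$ when $a\ge\rho$. Your $s_3$ bound does come out correctly, since $(\rho-1)\rho/(\rho-1)=\rho$, so the flaw is isolated to $s_2$; but as written the proposal does not establish the stated estimate.
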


\begin{re} Note that~$\lambda$ in~\eq{101} is upper bounded by~$\kappa$ in~\eq{100}, and in fact, $\lambda$ can be substantially smaller than~$\kappa$.
\end{re}

\begin{proof}[Proof of Theorem \ref{thm6}] 
Let~$I$ and~$J$ be independent random variables, uniformly distributed on set of indices $\{1,\dots,n\}$ independently of all else. Let~$W' = W - Y_I$ and~$G = -nX_I$; then~$(W,W',G)$ is a Stein coupling, and we have~$\Delta= -Y_I$. Let~$\mathcal{F}=\sigma(X_1,\dots,X_n)$, and let~$(X_1^*,\dots,X_n^*)$ be an independent copy of~$(X_1,\dots,X_n)$. Let~$G'=-nX_J$ and~$\Delta'=-Y_J$; clearly, $(G',\Delta')$ is an independent copy of~$(G,\Delta)$ given~$\cF$. Moreover, with
\be{
	(G^*,\Delta^{\!*}) = \begin{cases}
		\clr{-nX_J,-Y_J} & \text{if~$A_J\cap B_I = \emptyset$,}\\
		\clr{-nX^*_J,-Y^*_J} & \text{if~$A_J\cap B_I \neq \emptyset$;}\\
	\end{cases}
}
it is easy to see that~$(G^*,\Delta^{\!*})$ is an unconditionally independent copy of~$(G,\Delta)$. We now apply Theorem~\ref{thm5}.
To this end, recall Young's inequality; for non-negative real numbers~$a$ and~$b$ we have~$ab\leq a^p/p+ b^q/q$ whenever~$p$ and~$q$ are H\"older conjugates. 

We start by bounding~$s_2$; we have 
\be{
	s_2 = \sum_{i=1}^n \IE\bclc{\abs{X_i}(|Y_i|\wedge 1)^2}.
}
If~$3\leq\rho\leq 4$, Young's inequality yields
\be{
	\IE\bclc{\abs{X_i}(|Y_i|\wedge 1)^2} \leq \frac{\IE\abs{X_i}^3}{3} + \frac{2\,\IE\abs{Y_i}^3}{3}\leq \theta^3,
}
and if~$2<\rho<3$, Young's inequality yields
\bes{
	\IE\bclc{\abs{X_i}(|Y_i|\wedge 1)^2} 
	&\leq \frac{\IE\abs{X_i}^\rho}{\rho} + \frac{(\rho-1)\,\IE(\abs{Y_i}\wedge 1)^{2\rho/(\rho-1)}}{\rho}
	\leq \bbclr{\frac{1}{\rho}+\frac{\rho-1}{\rho}}\theta^\rho= \theta^\rho,
}
so that
\be{
	s_2\leq n\theta^{3\wedge\rho}.
}
We continue to bound~$s_3$; we have 
\be{
	s_3 = \sum_{i=1}^n \IE\bclc{\abs{X_i}(\abs{Y_i}- 1)\bone[\abs{Y_i}>1]}.
}
If~$3\leq\rho\leq 4$, Young's inequality yields
\be{
	\IE\bclc{\abs{X_i}(\abs{Y_i}- 1)\bone[\abs{Y_i}>1]} \leq \frac{\IE\abs{X_i}^3}{3} + \frac{2\,\IE\abs{Y_i}^3}{3}\leq \theta^3,
}
and if~$2<\rho<3$, Young's inequality yields
\bes{
	\IE\bclc{\abs{X_i}(\abs{Y_i}- 1)\bone[\abs{Y_i}>1]} 
	&\leq \frac{\IE\abs{X_i}^\rho}{\rho} + \frac{(\rho-1)\,\IE\bclr{\abs{Y_i}\bone[\abs{Y_i}>1]}^{\rho/(\rho-1)}}{\rho}\\
	& \leq \frac{\IE\abs{X_i}^\rho}{\rho} + \frac{(\rho-1)\,\IE\abs{Y_i}^{\rho}}{\rho}\leq\theta^\rho,
}
so that
\be{
	s_3\leq n\theta^{3\wedge\rho}.
}
Now, in order to bound~$s_4$, note that
\be{
	\IE\clc{\abs{GG'}(\abs{\Delta'-\Delta^*}\wedge1)}
	= \sum_{i=1}^n\sum_{j:A_j\cap B_i\neq \emptyset}^n \IE\clc{|X_iX_j|(\abs{Y_j-Y_j^*}\wedge1)}.
}	
Using again Young's inequality, we have for~$3\leq \rho\leq 4$,
\bes{
	\IE\clc{|X_iX_j|(\abs{Y_j-Y_j^*}\wedge1)}
	&\leq \frac{1}{3}\IE\bclc{\abs{X_i}^3+\abs{X_j}^3+4\abs{Y_j}^3+4\abs{Y_j^*}^3}
	\leq \frac{10}{3}\theta^3,
}
and for~$2 < \rho <3$,
\bes{
	\IE\clc{|X_iX_j|(\abs{Y_j-Y_j^*}\wedge1)}
	&\leq \frac{1}{\rho}\IE\abs{X_i}^\rho
	+ \frac{1}{\rho}\IE\abs{X_j}^\rho
	+ \frac{\rho-2}{\rho}\IE(\abs{Y_j-Y_j^*}\wedge1)^{\rho/(\rho-2)}\\
	&\leq \frac{1}{\rho}\IE\abs{X_i}^\rho
	+ \frac{1}{\rho}\IE\abs{X_j}^\rho
	+ \frac{\rho-2}{\rho}\IE(\abs{Y_j-Y_j^*})^{\rho} \\
	&\leq \frac{1}{\rho}\IE\abs{X_i}^\rho
	+ \frac{1}{\rho}\IE\abs{X_j}^\rho
	+ 2^{\rho-1}\frac{\rho-2}{\rho}\bclr{\IE\abs{Y_j}^\rho+\IE\abs{Y_j^*}^\rho}
	\\
	&\leq \bbbclr{\frac{2}{\rho}
	+ 2^{\rho}\frac{\rho-2}{\rho}}\theta^{\rho}
	\leq \frac{10}{3}\theta^{\rho}.
}
Similarly, 
\bes{
 \E\bclc{\abs{G(G'-G^*)}(\abs{\Delta'}\wedge\abs{\Delta^{\!*}}\wedge 1)}
 & \leq  \E\bclc{\abs{G(G'-G^*)}(\abs{\Delta'}\wedge 1)} \\
 & = \sum_{i=1}^n\sum_{j:A_j\cap B_i\neq \emptyset}^n \E\bclc{\abs{X_i(X_j-X_j^*)}(\abs{Y_j}\wedge 1)}.
}
Using again Young's inequality, we have for~$3\leq \rho\leq 4$,
\bes{
	\E\bclc{\abs{X_i(X_j-X_j^*)}(\abs{Y_j}\wedge 1)}
	&\leq \frac{1}{3}\IE\bclc{\abs{X_i}^3+4\abs{X_j}^3+4\abs{X_j^*}^3+\abs{Y_j}^3}
	\leq \frac{10}{3}\theta^3,
}
and for~$2 < \rho <3$,
\bes{
	&\E\bclc{\abs{X_i(X_j-X_j^*)}(\abs{Y_j}\wedge 1)}\\
	&\qquad\leq \frac{1}{\rho}\IE\abs{X_i}^\rho
	+ \frac{1}{\rho}\IE\abs{X_j-X_j^*}^\rho
	+ \frac{\rho-2}{\rho}\IE(\abs{Y_j}\wedge1)^{\rho/(\rho-2)}\\
	&\qquad\leq \frac{1}{\rho}\IE\abs{X_i}^\rho
	+ \frac{1}{\rho}\IE\abs{X_j-X_j^*}^\rho
	+ \frac{\rho-2}{\rho}\IE\abs{Y_j}^{\rho} \\
	&\qquad\leq \frac{1}{\rho}\IE\abs{X_i}^\rho
	+ \frac{2^{\rho-1}}{\rho}\bclr{\IE\abs{X_j}^\rho+\IE\abs{X_j^*}^\rho}
	+ \frac{\rho-2}{\rho}\IE\abs{Y_j}^\rho
	\\
	&\qquad\leq \bbbclr{\frac{1+2^{\rho}}{\rho}
	+ \frac{\rho-2}{\rho}}\theta^{\rho}
	\leq \frac{10}{3}\theta^{\rho}.
}
Hence,
\be{
	s_4\leq \frac{20}{3}n\lambda\theta^{3\wedge \rho}.
}
Finally, to bound~$s_1$, note that
\bes{
	&\IE\bclc{\abs{GG'}(\abs{\Delta}\wedge 1) (\abs{\Delta'-\Delta^{\!*}}\wedge 2)}\\
 	&\qquad = \sum_{i=1}^n\sum_{j:A_j\cap B_i\neq \emptyset}^n \E\bclc{\abs{X_iX_j}(\abs{Y_i}\wedge 1)(\abs{Y_j-Y_j^*}\wedge 2)}.
}
Using Young's inequality, we have
\bes{
&\E\bclc{\abs{2X_iX_j}(\abs{Y_i}\wedge 1)((\ahalf\abs{Y_j-Y_j^*})\wedge 1)} \\
&\qquad\leq \frac{\IE\abs{2X_i}^\rho}{\rho}
+\frac{\IE\abs{X_j}^\rho}{\rho}\\
&\kern8em+\frac{(\rho-2)\bclr{\IE(\abs{Y_i}\wedge 1)^{2\rho/(\rho-2)}+\IE((\ahalf\abs{Y_j-Y_j^*})\wedge 1)^{2\rho/(\rho-2)}}}{2\rho}\\
&\qquad\leq \frac{\IE\abs{2X_i}^\rho}{\rho}
+\frac{\IE\abs{X_j}^\rho}{\rho}
+\frac{(\rho-2)\bclr{\IE\abs{Y_i}^\rho+2^{-\rho}\IE\abs{Y_j-Y_j^*}^\rho}}{2\rho}\\
&\qquad\leq \bbbclr{\frac{1+2^\rho}{\rho}+\frac{(\rho-2)(1+2^{-\rho+\rho-1+1})}{2\rho}}\theta^\rho \leq \frac{19}{4}\theta^\rho.
}
Similarly,
\bes{
	&\IE\bclc{\abs{G(G'-G^*)}(\abs{\Delta}\wedge 1)(\abs{\Delta^{\!*}}\wedge 1)}\\
 	&\qquad = \sum_{i=1}^n\sum_{j:A_j\cap B_i\neq \emptyset}^n \E\bclc{\abs{X_i(X_j-X_j^*)}(\abs{Y_i}\wedge 1)(\abs{Y_j^*}\wedge 1)}.
}
Using Young's inequality another time, we have
\bes{
&\E\bclc{\abs{X_i(X_j-X_j^*)}(\abs{Y_i}\wedge 1)(\abs{Y_j^*}\wedge 1)} \\
&\qquad\leq \frac{\IE\abs{X_i}^\rho}{\rho}
+\frac{\IE\abs{X_j-X_j^*}^\rho}{\rho}
+\frac{(\rho-2)\bclr{\IE(\abs{Y_i}\wedge 1)^{2\rho/(\rho-2)}+\IE(\abs{Y_j^*}\wedge 1)^{2\rho/(\rho-2)}}}{2\rho}\\
&\qquad\leq \frac{\IE\abs{X_i}^\rho}{\rho}
+\frac{\IE\abs{X_j-X_j^*}^\rho}{\rho}
+\frac{(\rho-2)\bclr{\IE\abs{Y_i}^\rho+\IE\abs{Y_j^*}^\rho}}{2\rho}\\
&\qquad\leq \bbbclr{\frac{1+2^\rho}{\rho}+\frac{\rho-2}{\rho}}\theta^\rho \leq \frac{19}{4}\theta^\rho.
}
Thus,
\be{
	s_1^2\leq \frac{19}{2}n\lambda\theta^{\rho}.
}
Combining the bounds and applying Theorem~\ref{thm5} yields the final bound.
\end{proof}

{

\def\phi{\varphi}

\subsection{Additive functionals in the classical occupancy scheme}

Consider the following multinomial urn model. A total of~$m$ balls are independently distributed among~$n$ urns in such a way that a ball is placed in urn~$i$ with probability~$p_i$, where~$\sum_{i=1}^m p_i = 1$. Let~$\xi_i$ be the number of balls  urn~$i$ contains after the balls have been distributed. For each~$1\leq i \leq n$, let~$\phi_i$ be a real-valued function on the non-negative integers. We are interested in the statistic
\be{
	V = \sum_{i=1}^n \phi_i(\xi_i),
}
respectively, the centered and normalized version 
\be{
	W = \frac{1}{\sigma}\sum_{i=1}^n \clr{\phi_i(\xi_i)-\mu_i},
}
where~$\mu_i = \IE\phi_i(\xi_i)$ and~$\sigma^2 = \Var V$. In the special case where~$p_i=1/n$ and~$\phi_i=\phi$, this statistic has been studied by various authors; in particular, \cite{Dembo1996} used Stein's method and size-biasing to obtain error bounds, but only for smooth probability metrics (we refer to their paper for general references). We give the corresponding result for the Kolmogorov distance for general~$p_i$ and~$\phi_i$, which is, to the best of our knowledge, new in this generality.

\begin{thm}\label{thm9} Let~$m$, $n$ and $W\!$ be as in the preceding paragraph. Assume there are positive constants~$K_1$, $K_2$ and~$K_3$ such that
\be{
	\abs{\phi_i(x)}\leq K_1e^{K_1x}, \qquad x\geq 0,\quad 1\leq i\leq n,
}
and such that
\ben{\label{102a}
	\sup_{1\leq i\leq n} p_i \leq \frac{K_2}{m},\qquad\text{and}
	\qquad n\leq K_3m.
}
Then there is a constant~$C:=C(K_1,K_2,K_3)$ such that
\ben{\label{103}
	 \dk(\law(W),\mathcal{N}(0,1)) \leq C\bbclr{\frac{n^{1/2}}{\sigma^2} + \frac{n}{\sigma^3}}.
}
\end{thm}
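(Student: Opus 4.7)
The strategy is to apply Theorem~\ref{thm5} to a Stein coupling $(W,W',G)$ designed for the multinomial occupancy structure, after first establishing uniform moment control over the summands.

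\textbf{Moment inputs.} The exponential growth bound $|\phi_i(x)|\le K_1e^{K_1x}$ combined with $\xi_i\sim\mathrm{Bin}(m,p_i)$ and $mp_i\le K_2$ yields $\E e^{a\xi_i}=(1-p_i+p_ie^a)^m\le\exp(K_2(e^a-1))$ uniformly in $i$ and $m$, and hence $\E|\phi_i(\xi_i)|^k\le C(k,K_1,K_2)$ for every $k\ge 1$. Setting $X_i=\phi_i(\xi_i)-\mu_i$, we obtain $\E|X_i|^k\le C(k)$ uniformly in $i$, which is the principal probabilistic input.

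\textbf{Coupling.} The goal is to build $(W,W',G)$ satisfying~\eqref{97} exactly. The natural starting point is to pick $I$ uniform on $\{1,\ldots,n\}$ independently of $\xi$, and to construct $W'$ by a modification localized to urn $I$ and one companion urn. Two candidate constructions present themselves: (a)~a ``replace-urn'' coupling in which $\xi_I$ is resampled from an independent copy $\tilde\xi_I$, which is an exact Stein coupling in the Poissonized model ($\eta_i$ independent Poisson$(mp_i)$) because the $\phi_i(\eta_i)$ are independent; and (b)~a multinomial-preserving ``ball-swap'' coupling $\xi'=\xi-e_B+e_{B'}$, where $B$ is a uniformly chosen ball and $B'\sim(p_j)$ is independent. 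For the multinomial, option~(a) produces an $O(1/m)$ error in~\eqref{97} via the correlations $\cov(\xi_i,\xi_j)=-mp_ip_j$, while option~(b) is exchangeable and can be combined with a suitably weighted $G$ (guided by the Binomial Stein relation $\E[\xi_i(1-p_i)g(\xi_i)]=\E[p_i(m-\xi_i)g(\xi_i+1)]$) to give~\eqref{97} exactly. I adopt~(b).

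\textbf{Bounding the $s_i$.} Introduce $J$ uniform on $\{1,\ldots,n\}$ independent of $I$, together with fresh independent copies $\xi^*$ and analogous auxiliary ball-indices, to build $(G',\Delta')$ conditionally independent of $(G,\Delta)$ given $\calF=\sigma(\xi)$, and $(G^*,\Delta^{\!*})$ unconditionally independent of $(G,\Delta)$. Since the coupling modifies at most two urn coordinates, $|\Delta|$ is controlled by the one-ball increment $|\phi_j(\xi_j+1)-\phi_j(\xi_j)|/\sigma$ at the affected urns, which has uniformly bounded moments by the first step. The single-sum terms $s_2$ and $s_3$ reduce to $\sigma^{-3}\sum_i\E|X_i|^3$-type quantities via Young's inequality, giving $s_2,s_3=O(n/\sigma^3)$. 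The double-sum terms $s_1^2$ and $s_4$ superficially look like $O(n^2/\sigma^4)$, but the differences $\Delta'-\Delta^{\!*}$ and $G'-G^*$ between the conditionally and unconditionally independent copies produce an effective cancellation that reduces them to $O(n/\sigma^4)$ and $O(n/\sigma^3)$ respectively. Substituting into Theorem~\ref{thm5} yields~\eqref{103}.

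\textbf{Main obstacle.} Two points are delicate. First, achieving~\eqref{97} \emph{exactly} under the multinomial constraint rules out the naive ``resample urn'' coupling and forces us to use the ball-swap together with a correctly weighted $G$ obtained from the Binomial Stein identity; otherwise one only has an approximate identity whose residual must be estimated separately. Second, the cancellation that brings $s_4$ down from the crude $O(n^2/\sigma^3)$ bound (that follows from $\E|X_iX_j|=O(1)$ for all $i,j$) to $O(n/\sigma^3)$ relies on exploiting the mean-zero structure of $\Delta'-\Delta^{\!*}$ and $G'-G^*$ across the pair-index $(i,j)$, and is the most computation-intensive step, though conceptually only Cauchy--Schwarz combined with the uniform moment bound from Step~1.
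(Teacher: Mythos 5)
Your proposed coupling diverges from the paper's at the crucial first step, and the divergence is not cosmetic. You reject the ``resample urn~$I$'' idea as giving only an approximate Stein identity, but the paper's coupling is not naive resampling---it is the ``empty-and-redistribute'' construction of \cite[Construction~2A]{CR2010}: \emph{all}~$\xi_I$ balls in urn~$I$ are redistributed among the remaining urns $i\neq I$ with conditional probabilities $p_i/(1-p_I)$, producing $\eta_i = \xi_i + \sum_{k\leq\xi_I}\bone[\iota_k=i]$. Given~$I$, the vector $(\eta_i)_{i\neq I}$ is a multinomial over the urns $\neq I$ with \emph{all}~$m$ balls and hence is conditionally independent of~$\xi_I$; this makes~\eqref{97} hold \emph{exactly} with $G = -n\sigma^{-1}(\phi_I(\xi_I)-\mu_I)$, without any Poissonization. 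Your ``ball-swap'' alternative, on the other hand, is not verified to give an exact Stein coupling: for nonlinear~$\phi_i$ the regression $\E\{W'-W\mid\calF\}$ is not linear in~$W$, and the Binomial Stein identity you cite holds only for the marginals, not jointly under the multinomial constraint $\sum_i\xi_i=m$, so in fact there would be a residual in~\eqref{97} that your plan has no step to absorb.

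The second gap is the mechanism that brings $s_1$ and $s_4$ down from $O(n^2)$ to~$O(n)$ in the numerator. You attribute it to ``mean-zero structure'' of $\Delta'-\Delta^{\!*}$ and $G'-G^*$ together with Cauchy--Schwarz, but that is not how the paper argues and it is unclear how it would close the gap. The paper instead builds the unconditionally independent copy from a fresh multinomial $(\xi^\bullet_i)$, recoupled so that $\xi^*_i=\xi_i$ for every~$i$ outside a small random set $M=\{I\}\cup N\cup N^\bullet$ (with a balance correction of $|\chi-\chi^\bullet|$ balls to keep the total at~$m$), and then observes that $(G',\Delta')=(G^*,\Delta^{\!*})$ unless $J\in M$ or $N'\cap M\neq\emptyset$---an event~$A$ of probability $O(1/n)$, because $\E|M|=O(1)$ by the exponential moment bounds. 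The $O(n)$ reduction then follows from H\"older together with the factor $\IP[A]^{1/4}$ (this is exactly Lemma~\ref{rem1}), not from any anti-symmetry across pair indices. Constructing $(\xi^*_i)$ so that it is simultaneously independent of $(\xi_i)_{i\in N\cup\{I\}}$ under the multinomial constraint \emph{and} coincides with $(\xi_i)$ outside a set of $O(1)$ expected size is the technical heart of the proof; your plan names the objects but does not say how to build them. Your moment-input step, by contrast, is correct and matches what the paper does.
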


Specializing to the case originally considered by \cite{Dembo1996}, we have that~$\sigma^2 \asymp n$ as long as~$\phi$ is not a linear function (see \cite[Remark 3.1]{Dembo1996}), and we have the following corollary. 
 
\begin{cor}\label{cor10} If~$\phi_i = \phi$ for all~$i$ for some non-linear function~$\phi$, if~$p_i = 1/n$, and if~$0<\lim_{n\to\infty} n/m<\infty$, then~$\sigma^2\asymp n$ and
thus,
\be{
	 \dk(\law(W),\mathcal{N}(0,1)) = \bigo\bbclr{\frac1{n^{1/2}}}.
}
\end{cor}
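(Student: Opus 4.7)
The plan is to invoke Theorem~\ref{thm9} essentially as a black box, so the work consists of (i) checking that its hypotheses are satisfied in the stated setting and (ii) establishing the variance lower bound $\sigma^2 \asymp n$, which is really the only non-trivial input.

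First, the hypotheses \eqref{102a} are immediate from the assumption $0<\lim_{n\to\infty}n/m<\infty$: for large $n$ we have $c_1 m\le n\le c_2 m$ for some constants, hence $n\le K_3 m$ with $K_3 = c_2$, and $p_i = 1/n \le c_2/m = K_2/m$. The implicit requirement that $\phi$ grows at most exponentially is inherited from Theorem~\ref{thm9} and is part of the ambient hypotheses for the corollary to be applicable.

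The key point is the two-sided variance estimate $\sigma^2 \asymp n$. By exchangeability of $(\xi_1,\dots,\xi_n)$ under uniform cell probabilities,
\begin{equation*}
\sigma^2 = n\Var\phi(\xi_1) + n(n-1)\Cov\bclr{\phi(\xi_1),\phi(\xi_2)}.
\end{equation*}
The upper bound $\sigma^2 = \bigo(n)$ follows from the observation that, under a Poissonization in which $m$ is replaced by an independent $\Po(m)$, the $\xi_i$ become i.i.d.\ $\Po(m/n)$, so the Poissonized variance is exactly $n\Var\phi(\Po(m/n)) = \bigo(n)$; a standard de-Poissonization then controls the cross-covariance term at the order $1/n$. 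The lower bound $\sigma^2 \geq cn$ is the delicate part: in the Poisson limit $\lambda = \lim m/n \in (0,\infty)$, one has
\begin{equation*}
	\Var\phi(\xi_1) + (n-1)\Cov\bclr{\phi(\xi_1),\phi(\xi_2)}\ \longrightarrow\ \Var\phi(Z_\lambda) - \lambda\bclr{\E[(Z_\lambda-\lambda)\phi(Z_\lambda)]}^2/\lambda,
\end{equation*}
where $Z_\lambda\sim\Po(\lambda)$, and this limit is strictly positive precisely when $\phi$ is not affine (since the Cauchy--Schwarz inequality is strict unless $\phi(Z_\lambda)$ is a linear function of $Z_\lambda$). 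This variance calculation is carried out in detail in Remark~3.1 of \cite{Dembo1996}, on which we rely.

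Once $\sigma^2 \asymp n$ is in hand, Theorem~\ref{thm9} gives
\begin{equation*}
	d_{\mathrm{K}}(\law(W),\cn(0,1)) \le C\bbbclr{\frac{n^{1/2}}{\sigma^2}+\frac{n}{\sigma^3}} = \bigo\bclr{n^{-1/2}},
\end{equation*}
as claimed. The main obstacle here is genuinely the lower bound on $\sigma^2$: non-linearity of $\phi$ is essential (linear $\phi$ produces a deterministic total $V=am+bn$, collapsing $\sigma^2$ to zero), and the quantitative separation from zero in the limit requires the Poissonization/de-Poissonization comparison described above. All other ingredients are purely routine verifications.
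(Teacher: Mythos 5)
Your proposal is correct and follows the same route as the paper: verify the hypotheses of Theorem~\ref{thm9}, invoke Remark~3.1 of \cite{Dembo1996} for the two-sided bound $\sigma^2\asymp n$ (which is exactly where the non-linearity of $\phi$ enters, via the Poissonized variance being strictly positive), and then read off the rate $\bigo(n^{-1/2})$. The paper disposes of the corollary in a single sentence citing the same reference, so your Poissonization heuristic for the limiting variance is extra exposition rather than a different method (and note a minor slip: the prefactor $\lambda$ and the trailing $/\lambda$ in your displayed limit cancel; the correct expression is $\Var\phi(Z_\lambda)-\bclr{\E\cls{(Z_\lambda-\lambda)\phi(Z_\lambda)}}^2/\lambda$).
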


\begin{proof}[Proof of Theorem~\ref{thm9}] In what follows the reference set in expressions like `$i\neq I$' or `$i\not\in A$' is~$\{1,\dots,n\}$, so that these expressions have to be read as `$i\in \{1,\dots,n\}\setminus\{I\}$' or `$i\in \{1,\dots,n\}\setminus A$', respectively. Moreover, we use the convention that sums of the form `$\sum_{i\neq j}$' stand for single sums over the first variable, not double sums over both variables. 

\smallskip\noindent\textbf{Stein coupling.}\enskip Denote by~$(\xi_i)_{1\leq i\leq n}$ the ball counts in the respective urns, and let~$\calF=\sigma(\xi_1,\dots,\xi_n)$. Let~$I$ be uniformly distributed on~$\{1,\dots,n\}$, independently of all else. Given~$I$, let~$\iota_1,\iota_2,\dots$ be an i.i.d.\ sequence, where~$\IP[\iota_1=i|I] = p_i/(1-p_I)$ for all~$i\neq I$, let~$\eta_i = \xi_i + \sum_{k=1}^{\xi_I}\bone[\iota_k=i]$ for~$i\neq I$, and let~$N \coloneqq  \{\iota_1,\dots,\iota_{\xi_I}\}$. The family of random variables~$(\eta_i)_{i\neq I}$ represents the configuration of balls in the urns if the balls from the~$I$th urn are redistributed among the other urns, and~$N$ is the set of urns having received at least one ball during that redistribution.

Let~$W$ be defined as before, and let 
\be{
	G=-\frac{n}{\sigma}(\phi_I(\xi_I)-\mu_I),\qquad W' = \frac{1}{\sigma}\bbclr{-\mu_I+\sum_{i\neq I} \bclr{\phi_i(\eta_i)-\mu_i}}.
}	
It is not difficult to see that~$(W,W',G)$ is a Stein coupling and that 
\be{
	\Delta = -\frac{1}{\sigma}\bbclr{\phi_I(\xi_I)+\sum_{i\in N}\bclr{\phi_i(\xi_i)-\phi_i(\eta_i)}};
}
see, for example, \cite[Construction~2A]{CR2010}.

\smallskip\noindent\textbf{Construction of~$\boldsymbol{(G',\Delta')}$.}
Let~$\calF = \sigma\bclr{\xi_1,\dots,\xi_n}$, and let~$J$ be uniformly distributed on~$\{1,\dots,n\}$, independently of all else. Given~$J$, let~$\iota'_1,\iota'_2,\dots$ be an i.i.d.\ sequence, where~$\IP[\iota'_1=i|J] = p_i/(1-p_J)$ for all~$i\neq J$, and let~$\eta'_i = \xi_i + \sum_{k=1}^{\xi_J}\bone[\iota'_k=i]$ for~$i\neq J$, and let~$N' \coloneqq  \{\iota'_1,\dots,\iota'_{\xi_J}\}$. Define
\be{
	G'=-\frac{n}{\sigma}(\phi_J(\xi_J)-\mu_J),\qquad \Delta' = -\frac{1}{\sigma}\bbclr{\phi_J(\xi_J)+\sum_{i\in N'}\bclr{\phi_i(\xi_i)-\phi_i(\eta_i')}}
}	
Clearly, $(G',\Delta')$ is an independent copy of~$(G,\Delta)$ conditionally on~$\calF$. 

\smallskip\noindent\textbf{Construction of~$\boldsymbol{(G^*,\Delta^{\!*})}$.}
We first construct a realization~$(\xi^*_i)_{1\leq i\leq n}$ of the urn process which is independent of~$(G,\Delta)$ but which is still closely coupled to~$(\xi_i)_{1\leq i\leq n}$. To this end, let~$(\xi^\bullet_i)_{1\leq i\leq n}$ be an independent copy of~$(\xi_i)_{1\leq i\leq n}$. Set
\be{
	\xi^*_i = \xi^\bullet_i\qquad \text{for all~$i\in N\cup \{I\}$.}
}
Now, let~$\chi = \sum_{i\in N\cup\{I\}}\xi_i$ and~$\chi^\bullet = \sum_{i\in N\cup\{I\}}\xi^\bullet_i$; we distinguish three cases. 
\begin{itemize}
\item[$(i)$]
If~$\chi^\bullet=\chi$, set 
\be{
	\xi^*_i = \xi_i\qquad \text{for all~$i\not\in N\cup \{I\}$.}
}
\item[$(ii)$] If~$\chi^\bullet < \chi$, let~$\iota_1^*,\dots,\iota^*_{\chi-\chi^\bullet}$ be i.i.d.\ random variables on~$\{1,\dots,n\}\setminus(N\cup\{I\})$ with distribution given by~$\IP[\iota_1^*=k|I,N] = p_k/(1-\sum_{j\in N\cup\{I\}}p_j)$ for~$k\not\in N\cup\{I\}$, and set
\be{
	\xi^*_i = \xi_i + \sum_{j=1}^{\chi-\chi^\bullet} \bone[\iota^*_j=i]
	\qquad \text{for all~$i\not\in N\cup \{I\}$.}
}
\item[$(iii)$] If~$\chi^\bullet > \chi$, let~$\iota_1^*,\dots,\iota_{\chi^\bullet - \chi}$ be constructed recursively as follows. Let~$\iota^*_1$ have distribution given by~$\IP[\iota^*_1=k|I,(\xi_i)_{i\not\in N\cup\{I\}}] = \xi_k / (m-\chi)$ for~$k\not\in N\cup\{I\}$. For~$1<l<\chi^\bullet - \chi$, assume~$\iota^*_1,\dots,\iota^*_l$ have been sampled, and let~$\iota^*_{l+1}$ have distribution given by~$\IP[\iota^*_{l+1}=k|I,(\xi_i)_{i\not\in N\cup\{I\}},(\iota^*_i)_{1\leq i\leq l}] = \bclr{\xi_k - \sum_{j=1}^l \bone[\iota^*_j=k]} / (m-\chi - l)$ for~$k\not\in N\cup\{I\}$. Finally, set
\be{
	\xi^*_i = \xi_i - \sum_{j=1}^{\chi^\bullet-\chi} \bone[\iota^*_j=i]
	\qquad \text{for all~$i\not\in N\cup \{I\}$.}
}
\end{itemize}
It is not difficult to check that the distribution of~$(\xi^*)_{1\leq i\leq n}$ is the same regardless of~$(\xi_i)_{i\in N\cup\{I\}}$ (the key to this is the observation that~$(\xi_i)_{i\not\in N\cup\{I\}}$ given~$(\xi_i)_{i\not\in N\cup\{I\}}$ is like distributing~$n-\chi$ balls independently among the urns~$i\not\in N\cup\{I\}$ proportionally to their respective probabilities~$(p_i)_{i\not\in N\cup\{I\} }$), so that~$(\xi^*)_{1\leq i\leq n}$ is independent of~$(\xi_i)_{i\in N\cup\{I\}}$.

With~$\iota'_1,\iota'_2,\dots$ as before, we set~$\eta^*_i = \xi^*_i + \sum_{k=1}^{\xi^*_J}\bone[\iota'_k=i]$ and~$N^* = \{\iota'_1,\dots,\iota'_{\xi^*_J}\}$ and define
\be{
	G^*=-\frac{n}{\sigma}(\phi_J(\xi^*_J)-\mu_J),\qquad \Delta^{\!*} = -\frac{1}{\sigma}\bbclr{\phi_J(\xi^*_J)+\sum_{i\in N^*}\bclr{\phi_i(\xi^*_i)-\phi_i(\eta_i^*)}}.
}	
Since~$(\xi^*_i)_{1\leq i\leq n}$, $J$ and~$(\iota'_1,\iota'_2,\dots)$
are all independent of~$(\xi_i)_{i\in N\cup\{I\}}$, it follows that~$(G^*,\Delta^{\!*})$ is an independent copy of~$(G,\Delta)$.

\smallskip\noindent\textbf{Bounding the error terms ---  preliminaries.} We first show that we can assume without loss of generality that
\ben{\label{104}
	\sup_{1\leq i\leq n}p_i \leq \frac{K_2}{m}\wedge\frac{1}{2}.
}
Indeed, there are only finitely many $m$ such that $K_2/m\geq 1/2$, that is, such that $m\leq 2K_2$, and hence, using the second condition of \eq{102a}, there are only finitely many pairs $(m,n)$ such that $K_2/m\geq 1/2$. For these finitely many cases, we can choose~$C$ large enough to make~\eq{103} true whenever $\sigma^2>0$.

We will use~\eq{104} repeatedly to conclude that, for example,
\be{
	\frac{p_j}{1-p_i} \leq \frac{2K_2}{m}\leq \frac{C}{m}.
}
Throughout the proof we will use~$C$ to denote a constant that can change from expression to expression but that only depends on~$K_1$, $K_2$ and~$K_3$. 

\smallskip\noindent\textbf{Bounding the error terms --- an event of small probability.} Define the event
\be{
	B_1 = \bclc{\text{$\xi_J = \xi_J^*$}} \cap \bclc{\text{$\xi_{\iota'_j} = \xi_{\iota'_j}^*$ for all~$1\leq j\leq \xi_J$}}
} 
and note that
\be{
	B_1\subset \{G'=G^*\}, \qquad B_1\subset \{\Delta'=\Delta^{\!*}\}.
}
Let~$N^\bullet=\{\iota_1^*,\dots,\iota_{\abs{\chi^\bullet-\chi}}^*\}=\{i\not\in N\cup\{I\}\,:\,\xi_i\neq \xi_i^*\}$, let~$M=N^\bullet\cup N\cup\{I\}$ (disjoint union!), and define the event
\be{
	B_2 = \bclc{M\cap \{J\}\cap N'=\emptyset}.
}
Clearly, $B_2\subset B_1$; thus, setting~$A = B_2^c$, we have
\be{
	\{G'\neq G^*\}\subset A, \qquad \{\Delta'\neq \Delta^{\!*}\}\subset A.
}
We proceed to bound~$\IP[A]$. Let~$\calG=\sigma\bclr{I,(\iota_i)_{i\geq 1},(\iota^*_i)_{i\geq 1},(\xi_i)_{1\leq i\leq n},(\xi^\bullet_i)_{1\leq i\leq n}}$; we have
\bes{
	\IP\cls{A\given\calG}
	& = \frac{1}{n}\sum_{j=1}^n \IP\cls{A\given J=j,\calG} 
	 \leq \frac{\abs{M}}{n} + \frac{1}{n}\sum_{j\not\in M}\IE\{\abs{M\cap N'}\,|J=j,\calG\}  \\
	& \leq \frac{\abs{M}}{n} + \frac{1}{n}\sum_{j\not\in M}\IE\bbbclc{\sum_{k=1}^{\xi_J}\bone[\iota'_k\in M]\given J=j,\calG}  
	 = \frac{\abs{M}}{n} + \frac{1}{n}\sum_{j\not\in M}\xi_j \sum_{k\in M}\frac{p_k}{1-p_j} \\
	& \leq  \frac{\abs{M}}{n} +\frac{2K_2\abs{M}}{nm}\sum_{j\not\in M}\xi_j.
}
Since~$\sum_{j\not\in M}\xi_j\leq m$, and since 
\be{
	\abs{M} = 1+\abs{N}+\abs{\chi-\chi^\bullet}\leq C(1 + \chi+\chi^\bullet),
}
we obtain
\ben{\label{105}
\IP\cls{A\given I,(\iota_i)_{i\geq 1},(\iota^*_i)_{i\geq 1},(\xi_i)_{1\leq i\leq n},(\xi^\bullet_i)_{1\leq i\leq n} }	\leq \frac{C}{n}\clr{1+\chi+\chi^\bullet}.
}
Let now~$\calG=\sigma\bclr{I,(\iota_i)_{i\geq 1},\xi_I}$. We have
\bes{
	&\IE\bbbclc{\sum_{i\in N\cup\{I\}}(\xi_i+\xi_i^\bullet)\given \calG} 
	 \leq \IE\bbbclc{\sum_{i\neq I}(\xi_i+\xi_i^\bullet)\sum_{k=1}^{\xi_I}\bone[\iota_k=i]\given \calG}\\
	& \qquad = \IE\bbbclc{\sum_{i\neq I}(\xi_i+\xi_i^\bullet)\sum_{k=1}^{\xi_I}\frac{p_i}{1-p_I}\given \calG}
	 \leq \frac{2K_2\xi_I}{m}\IE\bbbclc{\sum_{i\neq I}(\xi_i+\xi_i^\bullet)\given \calG} 
	 \leq 4K_2\xi_I,
}
which, together with~\eq{105}, implies that
\be{
	\IP\cls{A\given I,(\iota_i)_{i\geq 1},\xi_I}	\leq \frac{C(\xi_I+1)}{n}.
}
Taking expectation and applying Lemma~\ref{lem3}, we obtain
\be{
	\IP[A]\leq \frac{C}{n}.
}	
\smallskip\noindent\textbf{Bounding the error terms ---~$\boldsymbol{G}$ and~$\boldsymbol{\Delta}$.} Note first that
\be{
	\abs{\Delta}^3 \leq \frac{C}{\sigma^3}\bbbclr{e^{K_1\xi_I} + 2\sum_{i\in N}e^{K_1(\xi_i+\xi_I)}}^3 
	\leq \frac{C}{\sigma^3}\bbbclr{e^{3K_1\xi_I} + \abs{N}^{2}\sum_{i\in N}e^{3K_1(\xi_i+\xi_I)}},
}
and since~$\abs{N}\leq \xi_I\leq e^{\xi_I}$, we can further bound this by
\be{
	\abs{\Delta}^3 \leq \frac{C}{\sigma^3}\sum_{i\in N\cup\{I\}}e^{C(\xi_i+\xi_I)}.
}
Now, given~$I$ and~$\xi_I$, the family of random variables~$(\xi_i)_{i\neq I}$ is again an urn model; in particular, we have
\be{
	\law\clr{\xi_i\given I,\xi_I} 
		= \Bi\bbclr{m-\xi_I,\frac{p_i}{1-p_I}}.
}
Since the set~$N$ is chosen independently of~$(\xi_i)_{i\neq I}$, we can apply Lemma~\ref{lem3} consecutively, and we obtain
\be{
	\IE{ \sum_{i\in N\cup\{I\}}e^{C (\xi_i+\xi_I)}} 
	\leq C\IE\clc{ (1+\abs{N})e^{C\xi_I}}
	\leq C\IE\clc{ e^{C\xi_I}}
	\leq C. 
}
Thus,
\be{
	\IE\abs{\Delta}^3 \leq \frac{C}{\sigma^3}.
}
Now,
\be{
	\abs{G}^3 \leq \frac{Cn^3}{\sigma^3}\bclr{e^{3K_1\xi_I}+\mu_I^3} 
}
and since~$\mu^3_i\leq K_1\IE e^{3K_1\xi_i}$, we have 
\be{
	\IE \abs{G}^3 \leq \frac{Cn^3}{\sigma^3}\IE e^{C\xi_I}\leq\frac{Cn^3}{\sigma^3}.
}

\smallskip\noindent\textbf{Bounding the error terms ---~$\boldsymbol{G\bone_A}$ and~$\boldsymbol{\Delta \bone_A}$.} 
Note first that
\be{
	\Delta^4 \leq \frac{C}{\sigma^4}\bbbclr{e^{K_1\xi_I} + 2\sum_{i\in N}e^{K_1(\xi_i+\xi_I)}}^4 
	\leq \frac{C}{\sigma^4}\bbbclr{e^{4K_1\xi_I} + \abs{N}^{3}\sum_{i\in N}e^{4K_1(\xi_i+\xi_I)}},
}
and since~$\abs{N}\leq \xi_I$, we can further bound this by
\be{
	\Delta^4 \leq \frac{C}{\sigma^4}\sum_{i\in N\cup\{I\}}e^{C(\xi_i+\xi_I)}.
}
Thus, using~\eq{105}, we obtain that
\bes{
	\IE\clc{\Delta^4 \bone_A}
	&\leq \frac{C}{n\sigma^4}\IE\bbbclc{\bbbclr{ \sum_{i\in N\cup\{I\}}e^{C(\xi_i+\xi_I)}}\bbbclr{1+\sum_{i\in N\cup\{I\}}(\xi_i+\xi_i^\bullet)}} \\
	&\leq \frac{C}{n\sigma^4}\IE\bbbclc{\bbbclr{ \sum_{i\in N\cup\{I\}}e^{C(\xi_i+\xi_I)}}\bbbclr{\sum_{i\in N\cup\{I\}}e^{C(\xi_i+\xi_i^\bullet)}}} \\
	&\leq \frac{C}{n\sigma^4}\IE\bbbclr{ \sum_{i\in N\cup\{I\}}e^{C(\xi_i+\xi_i^\bullet+\xi_I)}}^2 
	\leq \frac{C}{n\sigma^4}\IE{ \sum_{i\in N\cup\{I\}}e^{C(\xi_i+\xi_i^\bullet+\xi_I)}}.
}
Similarly as before,
\ben{\label{106}
	\IE{ \sum_{i\in N\cup\{I\}}e^{C(\xi_i+\xi_i^\bullet+\xi_I)}} 
	\leq C\IE\bclc{ (1+\abs{N})e^{C\xi_I}}
	\leq C. 
}
Thus,
\be{
	\IE\clc{\Delta^4 \bone_A} \leq \frac{C}{n\sigma^4}.
}
In very much the same way, we deduce that
\be{
	\IE\bclc{G^4 \bone_A} \leq \frac{Cn^3}{\sigma^4}.
}
\smallskip\noindent\textbf{Bounding the error terms ---~$\boldsymbol{G'\bone_A}$ and~$\boldsymbol{\Delta' \bone_A}$.} 
Similarly as for~$\Delta^4$,
\be{
	\Delta'^4 
	\leq \frac{C}{\sigma^4}\sum_{i\in N'\cup\{J\}}e^{C(\xi_i+\xi_J)}.
}	
Let us now prove first that
\besn{\label{107}
&\IP[A|J,(\iota'_i)_{1\leq i\leq n},(\xi_i)_{1\leq i\leq n},(\xi^\bullet_i)_{1\leq i\leq n}] \\
	&\enskip\qquad \leq \frac{C}{n}\bbbclr{1+\xi_J + \frac{1}{m}\bbbclr{1+\sum_{i\in N'\cup\{J\}}\xi_i}\sum_{i\not\in N'\cup\{J\}}(\xi_i+\xi_i^\bullet)^2}.
}
To this end, let~$\calG=\sigma\bclr{J,(\iota'_i)_{1\leq i\leq n},(\xi_i)_{1\leq i\leq n},(\xi^\bullet_i)_{1\leq i\leq n}}$. We have
\bes{
	\IP[A|\calG] 
	& = \frac{1}{n}\sum_{i=1}^n\IP[A|I=i,\calG] 
	 = \frac{1+\abs{N'}}{n} + \frac{1}{n}\sum_{i\not\in N'\cup\{J\}}\IP[A|I=i,\calG] \\
	& \leq \frac{1+\xi_J}{n} + \frac{1}{n}\sum_{i\not\in N'\cup\{J\}}\IE\bclc{\abs{M\cap (N'\cup\{J\})}\given I=i,\calG} \\
	& \leq \frac{1+\xi_J}{n} + \frac{1}{n}\sum_{i\not\in N'\cup\{J\}}\IE\bclc{\abs{N\cap (N'\cup\{J\})}+\abs{N^\bullet\cap (N'\cup\{J\})}\given I=i,\calG}.
}
Now, for~$i\not\in N'\cup\{J\}$,
\bes{
	&\IE\bclc{\abs{N\cap (N'\cup\{J\})}\given I=i,\calG}  \\
	&\qquad \leq  \IE\bbbclc{\sum_{k=1}^{\xi_I}\bone[\iota_k\in N'\cup\{J\}]\given I=i,\calG} 
	 = \xi_i \sum_{k\in N'\cup\{J\}}\frac{p_k}{1-p_i} 
	 \leq \frac{C\xi_i(1+\xi_J)}{m},
}
and
\bml{
	\IE\bclc{\abs{N^\bullet\cap (N'\cup\{J\})}\given I=i,\calG} \\
	\shoveleft\qquad\leq \IE\bbbclc{\sum_{k=1}^{\abs{\chi-\chi^\bullet}}\bone[\iota^*_k\in N'\cup\{J\}]\given I=i,\calG} \\
	\shoveleft\qquad\leq \IE\bbbclc{\sum_{k=1}^{\abs{\chi-\chi^\bullet}}\bbbclr{\sum_{l\in N'\cup\{J\}}\frac{p_l}{1-\sum_{j\in N\cup\{I\}}p_j}+\frac{\sum_{l\in N'\cup\{J\}}\xi_l}{m-\chi-k+1}}\given I=i,\calG} \\
	\shoveleft\qquad \leq \IE\bbbclc{\sum_{k=1}^{\abs{\chi-\chi^\bullet}}\bbbclr{(1+\xi_J)\bone\bbcls{\xi_i+1\leq \frac{m}{2K_2}}\frac{2K_2}{m}+\bone\bbcls{\xi_i+1> \frac{m}{2K_2}}\\
	+\frac{\sum_{l\in N'\cup\{J\}}\xi_l}{m-\chi-k+1}}\given I=i,\calG}\hfilneg\hskip\multlinegap \\
	\shoveleft\qquad \leq \frac{C(\xi_i+\xi_J+1)}{m}\IE\bclc{\abs{\chi-\chi^\bullet}\given I=i,\calG}\\
	\qquad\qquad\quad+\bbbclr{\sum_{k\in N'\cup\{J\}}\xi_k}\IE\bbbclc{\sum_{k=1}^{\abs{\chi-\chi^\bullet}}\frac{1}{m-\chi-k+1}\given I=i,\calG}.\hfill
}
First observe that
\bes{
	& \IE\bclc{\abs{\chi-\chi^\bullet}\given I=i,\calG}
	 \leq \IE\clc{\chi+\chi^\bullet\given I=i,\calG} 
	= \IE\bbbclc{\sum_{k\in N\cup\{I\}}(\xi_k+\xi_k^\bullet)\given I=i,\calG} \\
	& \qquad = \xi_i+\xi_i^\bullet+\sum_{k\neq i}(\xi_k+\xi_k^\bullet)\IE\bbbclc{\sum_{l=1}^{\xi_i}\bone[\iota_l=k] \given I=i,\calG} \\
	&\qquad =  \xi_i+\xi_i^\bullet+\sum_{k\neq i}(\xi_k+\xi_k^\bullet)\sum_{l=1}^{\xi_i}\frac{p_k}{1-p_i}
	 \leq  \xi_i+\xi_i^\bullet+\sum_{k\neq i}(\xi_k+\xi_k^\bullet)\sum_{l=1}^{\xi_i}\frac{2K_2}{m}  \\
	&\qquad \leq C(\xi_i+\xi_i^\bullet)
}
and 
\bml{
	 \IE\bclc{\clr{\chi+\chi^\bullet}^2\given I=i,\calG}\\
	 \shoveleft\qquad\leq \IE\bbbclc{\sum_{j\in N\cup\{I\}}\sum_{k\in N\cup\{I\}}(\xi_j+\xi_j^\bullet)(\xi_k+\xi_k^\bullet)\given I=i,\calG} \\
	 \shoveleft\qquad =  (\xi_i+\xi_i^\bullet)^2+2(\xi_i+\xi_i^\bullet)\IE\bbbclc{\sum_{j\in N}(\xi_j+\xi_j^\bullet)\\+\sum_{j\in N}\sum_{k\in N}(\xi_j+\xi_j^\bullet)(\xi_k+\xi_k^\bullet)\given I=i,\calG}.
}
Now,
\bes{  
	&\IE\bbbclc{\sum_{j\in N}(\xi_j+\xi_j^\bullet)\given I=i,\calG} 
	 \leq \IE\bbbclc{\sum_{j\neq I}(\xi_j+\xi_j^\bullet)\bbbclr{\sum_{l=1}^{\xi_I}\bone[\iota_l=j]}\given I=i,\calG} \\
	&\qquad = \sum_{j\neq i}(\xi_j+\xi_j^\bullet)\IE\bbbclc{\sum_{l=1}^{\xi_I}\bone[\iota_l=j]\given I=i,\calG} 
	 = \sum_{j\neq i}(\xi_j+\xi_j^\bullet)\sum_{l=1}^{\xi_i}\frac{p_j}{1-p_i}\\
	&\qquad = \sum_{j\neq i}(\xi_j+\xi_j^\bullet)\sum_{l=1}^{\xi_i}\frac{2K_2}{m}
 \leq  2K_2\xi_i,
}
and
\bml{
	\IE\bbbclc{\sum_{j\in N}\sum_{k\in N}(\xi_j+\xi_j^\bullet)(\xi_k+\xi_k^\bullet)\given I=i,\calG} \\
	\qquad \leq \IE\bbbclc{\sum_{j\neq I}\sum_{k\neq I}(\xi_j+\xi_j^\bullet)(\xi_k+\xi_k^\bullet)\sum_{l=1}^{\xi_I}\bone[\iota_l=j]\sum_{u=1}^{\xi_I}\bone[\iota_u=k]\given I=i,\calG} \\
	\qquad = \sum_{j\neq i}\sum_{k\neq i}(\xi_j+\xi_j^\bullet)(\xi_k+\xi_k^\bullet)\IE\bbbclc{\sum_{l=1}^{\xi_I}\bone[\iota_l=j]\sum_{u=1}^{\xi_I}\bone[\iota_u=k]\given I=i,\calG} \\
	\qquad \leq \sum_{j\neq i}\sum_{k\neq i}(\xi_j+\xi_j^\bullet)(\xi_k+\xi_k^\bullet)\IE\bbbclc{\sum_{l=1}^{\xi_I}\bone[\iota_l=j]\given I=i,\calG} \\
	\times\IE\bbbclc{\sum_{u=1}^{\xi_I}\bone[\iota_u=k]\given I=i,\calG}\hfilneg\hskip\multlinegap \\
	\qquad \leq \sum_{j\neq i}\sum_{k\neq i}(\xi_j+\xi_j^\bullet)(\xi_k+\xi_k^\bullet)\bbclr{\xi_i\frac{p_j}{1-p_i}}^2\\
	\qquad \leq \sum_{j\neq i}\sum_{k\neq i}(\xi_j+\xi_j^\bullet)(\xi_k+\xi_k^\bullet)\bbclr{\xi_i\frac{2K_2}{m}}^2 	
	\leq 16K_2^2\xi_i^2,\hfill
}
so that
\bes{
	\IE\bclc{\clr{\chi+\chi^\bullet}^2\given I=i,\calG}
	&  \leq  (\xi_i+\xi_i^\bullet)^2+2(\xi_i+\xi_i^\bullet)(2K_2\xi_i+16K_2^2\xi_i^2) \\
	& \leq C(\xi_i+\xi_i^{\bullet})^2.
}
Now, since~$a+\abs{a-b}\leq 2(a+b)$ for non-negative numbers~$a$ and~$b$, we have
\bes{
&\IE\bbbclc{\sum_{k=1}^{\abs{\chi-\chi^\bullet}}\frac{1}{m-\chi-k+1}\given I=i,\calG} \\
&\qquad \leq \IE\bbbclc{\frac{\abs{\chi-\chi^\bullet}}{m-m/2}+2\log(m+1)\bone[2(\chi+\chi^\bullet)>m/2]\given I=i,\calG} \\
&\qquad \leq \frac{C}{m}\bclr{\IE\bclc{\abs{\chi-\chi^\bullet}\given I=i,\calG }\IE\bclc{\clr{\chi+\chi^\bullet}^2\given I=i,\calG }}\\
&\qquad\leq \frac{C(\xi_i+\xi_i^\bullet)^2}{m}.
}
This leads to
\bes{
	&\IE\clc{\abs{N^\bullet\cap (N'\cup\{J\})}\given I=i,\calG} \\
	&\qquad \leq C\bbclr{\frac{\xi_J+1}{m}+\frac{\xi_i+1}{m}}(\xi_i+\xi_i^\bullet)+C \bbbclr{\sum_{k\in N'\cup\{J\}}\xi_k}
	\frac{1}{m}(\xi_i+\xi_i^\bullet)^2 \\
	&\qquad \leq \frac{C}{m}\bbbcls{\clr{\xi_J+\xi_i+1}(\xi_i+\xi_i^\bullet)+\bbbclr{\sum_{k\in N'\cup\{J\}}\xi_k}
	(\xi_i+\xi_i^\bullet)^2} \\
	&\qquad \leq \frac{C}{m}\bbbcls{\xi_J(\xi_i+\xi_i^\bullet)+(\xi_i+\xi_i^\bullet)^2+\bbbclr{\sum_{k\in N'\cup\{J\}}\xi_k}
	(\xi_i+\xi_i^\bullet)^2} \\
	&\qquad \leq \frac{C}{m}\bbbclr{1+\sum_{k\in N'\cup\{J\}}\xi_k}(\xi_i+\xi_i^\bullet)^2. 
}
Putting all together, \eq{107} follows. Hence,
\bml{
	\IE\clc{\Delta'^4 \bone_A}\\
	\shoveleft\qquad\leq\frac{C}{n\sigma^4}
	\IE\bbbclc{\bbbclr{\sum_{i\in N'\cup\{J\}}e^{C (\xi_i+\xi_J)}}\\	\times\bbbclr{1+\xi_J + \frac{1}{m}\bbbclr{1+\sum_{i\in N'\cup\{J\}}\xi_i}\sum_{i\not\in N'\cup\{J\}}(\xi_i+\xi_i^\bullet)^2}} \hfilneg\hskip\multlinegap\\
	\shoveleft\qquad\leq\frac{C}{n\sigma^4}
	\IE\bbbclc{\bbbclr{\sum_{i\in N'\cup\{J\}}e^{C (\xi_i+\xi_J)}}	\\
		\times\bbbclr{e^{\xi_J} + \frac{1}{m}\bbbclr{\sum_{i\in N'\cup\{J\}}e^{\xi_i}}\sum_{i\not\in N'\cup\{J\}}e^{2(\xi_i+\xi_i^\bullet)}}}.
}
Now, similarly as in~\eq{106},
\be{
	\IE\bbbclc{e^{\xi_J}\sum_{i\in N'\cup\{J\}}e^{C (\xi_i+\xi_J)}}\leq C.
}
Moreover
\bes{
&\frac{1}{m}\IE\bbbclc{\bbbclr{\sum_{i\in N'\cup\{J\}}e^{C (\xi_i+\xi_J)}}\bbbclr{\sum_{i\in N'\cup\{J\}}e^{\xi_i}}\bbbclr{\sum_{i\not\in N'\cup\{J\}}e^{2(\xi_i+\xi_i^\bullet)}}}\\
&\qquad\leq \frac{1}{m}\IE\bbbclc{\bbbclr{\sum_{i\in N'\cup\{J\}}e^{C '(\xi_i+\xi_J)}}\bbbclr{\sum_{i\not\in N'\cup\{J\}}e^{2(\xi_i+\xi_i^\bullet)}}}\\
&\qquad\leq \frac{1}{m}\IE\bbbclc{\IE\bbbclr{\sum_{i\in N'\cup\{J\}}e^{C '(\xi_i+\xi_J)}\given \xi_J}\IE\bbbclr{\sum_{i\not\in N'\cup\{J\}}e^{2(\xi_i+\xi_i^\bullet)}\given \xi_J}}\\
&\qquad\leq \frac{Cn}{m} \leq C,
}
where for the second last inequality we used the fact that conditionally on~$J$ and~$\xi_J$, the family~$(\xi_i)_{i\neq J}$ is a urn model and hence negatively associated and we can therefore apply Lemma~\ref{lem4}, and where for the last inequality we proceeded in the same way as in~\eq{106}. So, putting all together we obtain
\be{
	\IE\clc{\Delta'^4 \bone_A}\leq \frac{C}{\sigma^4n}.
}
In very much the same way we deduce that
\be{
	\IE\clc{G'^4 \bone_A} \leq \frac{Cn^3}{\sigma^4}.
}

\smallskip\noindent\textbf{Bounding the error terms ---~$\boldsymbol{G^*\bone_A}$ and~$\boldsymbol{\Delta^{\!*} \bone_A}$.} 
Similarly as for~$\Delta^4$, and using that~$\xi_i^*\leq \xi_i+\xi_i^\bullet +\chi$, we have
\ben{\label{108}
	\Delta^{\!*4} 
	\leq \frac{C}{\sigma^4}\sum_{i\in N^*\cup\{J\}}e^{C(\xi_i^*+\xi_J^*)}\leq e^{C\chi}\sum_{i\in N^*\cup\{J\}}e^{C(\xi_i+\xi_i^\bullet)}.
}	
Let~$\calG=\sigma\bclr{I,(\iota_i)_{i\geq 1},(\iota^*_i)_{i\geq 1},(\xi_i)_{1\leq i\leq n},(\xi^\bullet_i)_{1\leq i\leq n}}$; we have
\besn{\label{109}
	\IE\bbbclc{\sum_{i\in N^*\cup\{J\}}e^{C (\xi_i+\xi_i^\bullet)}\bone_A \given\calG}
	= \frac{1}{n}\sum_{j=1}^n \IE\bbbclc{\sum_{i\in N^*\cup\{J\}}e^{C (\xi_i+\xi_i^\bullet)}\bone_A \given J=j,\calG}.
}
For~$j\in M$ we have
\besn{\label{110}
	& \IE\bbbclc{\sum_{i\in N^*\cup\{J\}}e^{C (\xi_i+\xi_i^\bullet)}\bone_A \given J=j,\calG} \\
	&\qquad \leq e^{C (\xi_j+\xi_j^\bullet)} + \IE\bbbclc{\sum_{i\neq J}e^{C (\xi_i+\xi_i^\bullet)}\sum_{k=1}^{\xi_J^*}\bone[\iota'_k=i] \given J=j,\calG} \\ 
		&\qquad \leq e^{C (\xi_j+\xi_j^\bullet)} + \IE\bbbclc{\sum_{i\neq J}e^{C (\xi_i+\xi_i^\bullet)}\sum_{k=1}^{\xi_J^*}\frac{p_i}{1-p_J} \given J=j,\calG} \\ 
		&\qquad \leq e^{C (\xi_j+\xi_j^\bullet)} + \frac{2K_2}{m}\sum_{i\neq j}e^{C (\xi_i+\xi_i^\bullet)}\xi_j^*.
}
For~$j\not\in M$, we have
\bes{
	& \IE\bbbclc{\sum_{i\in N^*\cup\{J\}}e^{C (\xi_i+\xi_i^\bullet)}\bone_A \given J=j,\calG} \\
	&\qquad\leq  \IE\bbbclc{\abs{M\cap N^*}\sum_{i\in N^*\cup\{J\}}e^{C (\xi_i+\xi_i^\bullet)} \given J=j,\calG}  \\
	&\qquad\leq \IE\bbbclc{\sum_{k=1}^{\xi_J^*}\bone[\iota'_k\in M]\bbbclr{e^{C (\xi_J+\xi_J^\bullet)}+\sum_{i\neq J}e^{C (\xi_i+\xi_i^\bullet)}\sum_{k=1}^{\xi_J^*}\bone[\iota'_k=i]} \given J=j,\calG}  \\
		&\qquad \leq \frac{2K_2\abs{M}}{m}\xi_j^*e^{C (\xi_j+\xi_j^\bullet)}+ \IE\bbbclc{\xi_J^*\sum_{i\in M}e^{C (\xi_i+\xi_i^\bullet)}\sum_{k=1}^{\xi_J^*}\bone[\iota'_k=i]\given J=j,\calG}\\
	&\qquad\qquad + \IE\bbbclc{\sum_{i\neq M\cup\{J\} }e^{C (\xi_i+\xi_i^\bullet)}\sum_{k=1}^{\xi_J^*}\bone[\iota'_k\in M]\sum_{k=1}^{\xi_J^*}\bone[\iota'_k=i]\given J=j,\calG}  \\
		&\qquad \leq 
		\frac{2K_2\abs{M}}{m}\xi_j^*e^{C (\xi_j+\xi_j^\bullet)}
	+ \xi_j^*\sum_{i\in M}e^{C (\xi_i+\xi_i^\bullet)}\xi_j^*\frac{2K_2}{m}\\
	&\qquad\qquad + \sum_{i\neq M\cup\{j\} }e^{C (\xi_i+\xi_i^\bullet)}\frac{2K_2\abs{M}\xi_j^*}{m}\frac{2K_2\xi_j^*}{m},
}
where in the last inequality we used that the indicators~$(\bone[\iota_k'\in M],(\bone[\iota_k'=i])$ are negatively associated whenever~$i\not\in M$. Combining~\eq{110} and~\eq{110} with~\eq{109}, we obtain
\besn{\label{111}
	&\IE\bbbclc{\sum_{i\in N^*\cup\{J\}}e^{C (\xi_i+\xi_i^\bullet)}\bone_A \given\calG} \\
	&\qquad \leq \frac{C}{n}\sum_{j=1}^n\bbbclr{e^{C (\xi_j+\xi_j^\bullet)} + \frac{1}{m}\sum_{i\neq j}\xi_j^*e^{C (\xi_i+\xi_i^\bullet)}+\frac{\abs{M}}{m}\xi_j^*e^{C (\xi_j+\xi_j^\bullet)} \\
	&\qquad\qquad+ \frac{1}{m}\sum_{i\in M}\xi_j^{*2}e^{C (\xi_i+\xi_i^\bullet)}+\frac{\abs{M}}{m^2}\sum_{i\neq M\cup\{j\} }\xi_j^{*2}e^{C (\xi_i+\xi_i^\bullet)}}.
}
Using the inequality~$1\leq \frac{1}{m}\sum_{i=1}^n\xi_i\leq \frac{1}{m}\sum_{i=1}^n e^{\xi_i}$ and~$\xi_i^*\leq \xi_i+\xi_i^\bullet +\chi$, it is not difficult to further coarsen the bound in~\eq{111} to
\bes{
	&\IE\bbbclc{\sum_{i\in N^*\cup\{J\}}e^{C (\xi_i+\xi_i^\bullet)}\bone_A \given\calG} \leq \frac{C }{nm^2}\sum_{i=1}^n\sum_{k=1}^n\sum_{j\in M}e^{C(\xi_i+\xi_j+\xi_k+\xi_i^\bullet+\xi_j^\bullet+\xi_k^\bullet+\chi)}.
}
Combining this with~\eq{108}, we have
\bes{
	\IE\bclc{\Delta^{\!*4}\bone_A} \leq \frac{C }{\sigma^4nm^2}\sum_{k=1}^n\sum_{l=1}^n\IE{\sum_{u\in M}e^{C\clr{\xi_k+\xi_l+\xi_u+\xi_k^\bullet+\xi_l^\bullet+\xi_u^\bullet+\chi}}}.
}
Now, let~$\calG=\sigma\bclr{I,(\iota_{u})_{u\geq 1},(\xi_u)_{1\leq u\leq n},(\xi_u^\bullet)_{1\leq u\leq n}}$; we have
\bes{
	& \IE\bbbclc{\sum_{u\in N^\bullet}e^{C\clr{\xi_u+\xi_u^\bullet}}\given \calG} \\
	&\qquad \leq \IE\bbbclc{\sum_{u\not\in N\cup\{I\}}e^{C \clr{\xi_u+\xi_u^\bullet}}\sum_{w=1}^{\abs{\chi-\chi^\bullet}}\bone[\iota_w^*=u]\given \calG} \\
	&\qquad \leq \IE\bbbclc{\sum_{u\not\in N\cup\{I\}}e^{C \clr{\xi_u+\xi_u^\bullet}}\sum_{w=1}^{\abs{\chi-\chi^\bullet}}\bbclr{\frac{p_u}{1-\sum_{x\in N\cup\{I\}}p_x} + \frac{\xi_u}{m-\chi-w+1}}\given \calG} \\
	&\qquad \leq \IE\bbbclc{\sum_{u\not\in N\cup\{I\}}e^{C \clr{\xi_u+\xi_u^\bullet}}\bbbcls{\abs{\chi-\chi^\bullet}\bbclr{\frac{2K_2}{m} +
	\bone\bbcls{\xi_I+1> \frac{m}{2K_2}}} \\
	&\qquad\qquad\qquad\qquad+ \frac{\xi_u\abs{\chi-\chi^\bullet}}{m-m/2} + 2\log(m+1)\bone[2\clr{\chi+\chi^\bullet}>m/2]}\given \calG}\\
	&\qquad \leq \frac{C }{m}\IE\bbbclc{\sum_{u\not\in N\cup\{I\}}e^{C \clr{\xi_u+\xi_u^\bullet}}\bbclr{\clr{\chi+\chi^\bullet}\clr{\xi_I+1} + \xi_u\clr{\chi+\chi^\bullet} + (\chi+\chi^\bullet)^2}\given \calG} \\
	&\qquad \leq \frac{C }{m}\sum_{u\not\in N\cup\{I\}}e^{C \clr{\xi_u+\xi_u^\bullet+\chi+\chi^\bullet}}.
}
Thus, since~$M$ is the disjoint union of~$N\cup\{I\}$ and~$N^\bullet$, we obtain
\bes{
	\IE\bbbclc{\sum_{u\in M}e^{C \clr{\xi_k+\xi_l+\xi_u+\xi_k^\bullet+\xi_l^\bullet+\xi_u^\bullet+\chi}}} 
	&\leq\frac{C }{m}\IE\bbbclc{\sum_{u\not\in N\cup\{I\}}e^{C \clr{\xi_k+\xi_l+\xi_u+\xi_k^\bullet+\xi_l^\bullet+\xi_u^\bullet+\chi+\chi^\bullet}}} \\
	&\qquad + \IE\bbbclc{\sum_{u\in N\cup\{I\}}e^{C \clr{\xi_k+\xi_l+\xi_u+\xi_k^\bullet+\xi_l^\bullet+\xi_u^\bullet+\chi}}}.
}
Now, conditioning on~$I$, $\xi_I$ and~$(\iota_i)_{i\geq 1}$, we can apply Lemma~\ref{lem4} and then Lemma~\ref{lem3}
so that, for example,
\be{
	\IE \bclc{e^{C \clr{\xi_k+\xi_l+\xi_u+\xi_k^\bullet+\xi_l^\bullet+\xi_u^\bullet+\chi}}\given I,\xi_I, (\iota_i)_{i\geq 1}}\leq C\cdot C^{\xi_I}\cdot e^{C\xi_I} \leq Ce^{C\xi_I}.
}
Hence,
\bes{
	\IE\bbbclc{\sum_{u\in M}e^{C \clr{\xi_k+\xi_l+\xi_u+\xi_k^\bullet+\xi_l^\bullet+\xi_u^\bullet+\chi}}} 
	&\leq\frac{C }{m}\IE\bbbclc{\sum_{u\not\in N\cup\{I\}}e^{C \xi_I}}+ C\IE\bbbclc{\sum_{u\in N\cup\{I\}}e^{C \xi_I}} \\
	&\leq C \bbclr{\frac{n}{m}+1}\IE e^{C  \xi_I}
	\leq C,
}
which finally leads to
\be{
	\IE\clc{\Delta^{\!*4}\bone_A} 
 \leq \frac{C}{\sigma^4nm^2}\sum_{k=1}^n\sum_{l=1}^n C \leq  \frac{C }{\sigma^4n}.
}
Again, in very much the same way we can prove that
\be{
	\IE\clc{G^{*4}\bone_A} 
 \leq \frac{Cn^3}{\sigma^4}.
}

\smallskip\noindent\textbf{Combining the bounds.} Collecting the bounds we need in order to employ Lemma~\ref{rem1} below, we have
\bg{
	g_0 \leq \frac{Cn}{\sigma},
	\quad
	d_0 \leq \frac{C}{\sigma}, 
	\quad 
	\IP[A]\leq\frac{C}{n},
	\quad
	g_1,g_2,g_3\leq \frac{Cn^{3/4}}{\sigma},
	\quad
	d_1,d_2,d_3\leq \frac{C}{\sigma n^{1/4}};
}
this leads to
\be{
	s_1\leq \frac{Cn^{1/2}}{\sigma^2},
	\qquad
	s_2,s_3,s_4\leq \frac{Cn}{\sigma^3}.
}
Applying these bounds to Theorem~\ref{thm5} proves the claim.
\end{proof}

\subsubsection{Technical lemmas}

The following lemma is straightforward to prove. 

\begin{lma}\label{rem1} Consider the setting of Theorem~\ref{thm5}. Let~$A$ be an event such that~$\{G'\neq G^*\}\cup\{\Delta'\neq\Delta^{\!*}\} \subset A$. With
\bea{
	g_0^3 &= \IE\abs{G}^3,& 
	 g_1^4 &= \IE\clc{G^4\bone_A},&
	 g_2^4 &= \IE\clc{G'^4\bone_A},&
	 g_3^4 &= \IE\clc{G^{*4}\bone_A},\\
	d_0^3 &= \IE\abs{\Delta}^3, &
	 d_1^4 &= \IE\clc{\Delta^4\bone_A},&
	 d_2^4 &= \IE\clc{\Delta'^4\bone_A},&
	 d_3^4 &= \IE\clc{\Delta^{\!*4}\bone_A},
}
we have
\bg{
s_1  \leq \bcls{g_1g_2d_1d_2
		+ 2g_1g_2d_1d_3
		+ g_1g_3d_1d_3 }^{1/2},
		\qquad
s_2,s_3  \leq g_0d_0^2, \\
 s_4 \leq \bclr{g_1g_2 d_2
 		+ g_1g_2d_3 
 		+ g_1g_2(d_2\wedge d_3)
		+ g_1g_3(d_2\wedge d_3)}\IP[A]^{1/4}.
}
\end{lma}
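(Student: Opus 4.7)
The plan is to reduce every expectation defining $s_1,\dots,s_4$ to a product of $L^p$ factors via H\"older's inequality, where the norms that appear are precisely those defining $g_i$ and $d_j$. Two observations make this essentially mechanical. First, since $\{G'\neq G^*\}\cup\{\Delta'\neq\Delta^{\!*}\}\subset A$, every factor involving $G'-G^*$ or $\Delta'-\Delta^{\!*}$ vanishes on $A^c$, so wherever such a factor appears the integrand equals its restriction to $A$; I then distribute four factors of $\bone_A^{1/4}$ to feed H\"older with exponents $(4,4,4,4)$, and any unabsorbed factor of $\bone_A^{1/4}$ is what produces the $\IP[A]^{1/4}$ appearing in the bound on $s_4$. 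Second, since $|x|\wedge 1\le|x|$, $L^4$ norms on $A$ of $(|\Delta|\wedge 1)$ are bounded by $d_i$, and analogously for $G$.

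For $s_2$ and $s_3$, the pointwise bounds $(|\Delta|\wedge 1)^2\le|\Delta|^2$ and $(|\Delta|-1)\bone[|\Delta|>1]\le|\Delta|^2$ reduce both quantities to $\E\{|G||\Delta|^2\}$, and H\"older with conjugate exponents $(3,3/2)$ delivers the bound $g_0 d_0^2$.

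For $s_1^2$, the first summand $\E\{|GG'|(|\Delta|\wedge 1)(|\Delta'-\Delta^{\!*}|\wedge 2)\bone_A\}$ is split via the triangle inequality $|\Delta'-\Delta^{\!*}|\wedge 2\le|\Delta'|+|\Delta^{\!*}|$ into two pieces, each handled by H\"older $(4,4,4,4)$ to give $g_1g_2d_1d_2$ and $g_1g_2d_1d_3$. The second summand $\E\{|G(G'-G^*)|(|\Delta|\wedge 1)(|\Delta^{\!*}|\wedge 1)\bone_A\}$ is treated analogously after $|G'-G^*|\le|G'|+|G^*|$, producing $g_1g_2d_1d_3+g_1g_3d_1d_3$. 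Summing yields the stated bound on $s_1$.

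For $s_4$, the first summand is restricted to $A$, and I use the elementary case-by-case inequality $|\Delta'-\Delta^{\!*}|\wedge 1\le(|\Delta'|\wedge 1)+(|\Delta^{\!*}|\wedge 1)$; H\"older $(4,4,4,4)$ with one unabsorbed factor of $\bone_A^{1/4}$ then yields $(g_1g_2d_2+g_1g_2d_3)\IP[A]^{1/4}$. The second summand is split via $|G'-G^*|\le|G'|+|G^*|$, and the pointwise bound $(|\Delta'|\wedge|\Delta^{\!*}|\wedge 1)\le(|\Delta'|\wedge 1)\wedge(|\Delta^{\!*}|\wedge 1)$ together with the fact that the expectation of a minimum is bounded by the minimum of the expectations lets each resulting piece be bounded by $g_1g_2(d_2\wedge d_3)\IP[A]^{1/4}$ and $g_1g_3(d_2\wedge d_3)\IP[A]^{1/4}$ respectively. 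Collecting the four contributions gives the bound on $s_4$. The only mildly delicate step is this last min--expectation interchange; everything else is routine bookkeeping of H\"older exponents, so no real obstacle arises.
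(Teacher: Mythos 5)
Your proof is correct, and it follows exactly the route the authors have in mind when they call the lemma "straightforward": drop the indicator to the power $1/4$ into a $(4,4,4,4)$-H\"older application (with one unabsorbed $\bone_A^{1/4}$ producing the $\IP[A]^{1/4}$ in $s_4$), use the triangle inequality to split $|\Delta'-\Delta^{\!*}|$ and $|G'-G^*|$, and use $(3,3/2)$-H\"older for $s_2,s_3$. Every pointwise inequality you invoke checks out, and the "min--expectation" step is just monotonicity of expectation applied to each branch separately, so there is no gap.
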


\begin{lma}\label{lem3} Let~$Z\sim\Bi(r,q)$ with~$r\leq m$ and~$q\leq c/m$. Then for any~$a,b\geq 0$, 
\be{
	\IE\bclr{Z^a e^{bZ}}\leq C(a,b,c).
}
\end{lma}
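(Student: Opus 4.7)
The plan is to reduce the claim to a bound on the moment generating function of~$Z$, which is explicit for a binomial. First I would absorb the polynomial factor~$Z^a$ into an exponential by using the elementary inequality
\be{
  x^a \leq (a/e)^a e^x, \qquad x \geq 0,
}
which follows from maximising~$x^a e^{-x}$ over~$x \geq 0$. Applying this pointwise to~$Z$ and taking expectation yields
\be{
  \IE\bclr{Z^a e^{bZ}} \leq (a/e)^a \, \IE\bclr{e^{(b+1)Z}}.
}

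Next I would compute the remaining expectation using the binomial MGF:
\be{
  \IE\bclr{e^{(b+1)Z}} = \bclr{1 + q(e^{b+1}-1)}^r \leq \exp\bclr{rq(e^{b+1}-1)} \leq \exp\bclr{c(e^{b+1}-1)},
}
where the first inequality uses~$1+u \leq e^u$ and the last uses the assumption~$rq \leq c$ (which follows from~$r \leq m$ and~$q \leq c/m$).

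Combining the two displays gives
\be{
  \IE\bclr{Z^a e^{bZ}} \leq (a/e)^a \exp\bclr{c(e^{b+1}-1)} \eqqcolon C(a,b,c),
}
which is the desired bound. There is no essential obstacle; the only small subtlety is the need to ensure some slack in the exponent (taking~$b+1$ rather than~$b$) so that the polynomial factor can be swallowed, but any fixed positive slack would serve equally well.
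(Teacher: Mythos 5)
Your proof is correct and takes essentially the same route as the paper: absorb the polynomial factor into an exponential, compute the binomial MGF, and bound it via~$1+u\leq e^u$ together with~$rq\leq c$. The only cosmetic difference is the absorption step — the paper uses~$Z^a\leq e^{aZ}$ (from~$Z\leq e^Z$) rather than your~$Z^a\leq (a/e)^a e^Z$ — which just shifts whether the dependence on~$a$ appears in the exponent or as a prefactor.
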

\begin{proof} This easily follows from
\be{
	\IE\bclr{Z^a e^{bZ}} 
	 \leq \IE{e^{(a+b)Z}}
   = \exp\bclr{r\log(qe^{a+b}+1-q)} \leq \exp\bclr{rqe^{a+b}} 
   \leq \exp\bclr{ce^{a+b}}.\qedhere
}
\end{proof}

\begin{lma}\label{lem4} The family of random variables~$(\xi_1,\dots,\xi_n)$ is negatively associated; that is, for non-decreasing functions~$f$ and~$g$ and disjoint sets~$A,B\subset\{1,\dots,n\}$, 
\be{
	\IE\bclc{f\bclr{(\xi_i)_{i\in A}}g\bclr{(\xi_i)_{i\in B}}}
	\leq \IE{f\bclr{(\xi_i)_{i\in A}}}\IE{g\bclr{(\xi_i)_{i\in B}}}.
}
\end{lma}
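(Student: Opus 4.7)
The plan is to invoke the classical theory of negatively associated (NA) random variables, introduced by Joag-Dev and Proschan (1983). The two closure properties I will use are: (P1) the concatenation of two mutually independent NA families is again NA, and (P2) non-decreasing functions of disjoint subsets of an NA family form an NA family. These, together with a single base case, reduce the claim to routine bookkeeping.

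First, I would show that for each ball $k \in \{1,\dots,m\}$, the indicator vector $I_k = (I_{k,1},\dots,I_{k,n})$ with $I_{k,i} = \bone[\text{ball } k \text{ lands in urn } i]$ is NA. Since $\sum_{i=1}^n I_{k,i} = 1$ almost surely, this is a $0/1$ vector with exactly one nonzero coordinate, and such vectors are NA by the ``zero-one principle'': for any non-decreasing $f,g$ and disjoint index sets $A,B \subset \{1,\dots,n\}$, one verifies $\Cov(f((I_{k,i})_{i\in A}), g((I_{k,i})_{i\in B})) \le 0$ by a direct computation that exploits the fact that at most one of the indicators in $A \cup B$ can be nonzero at a time (so the joint distribution is supported on an essentially one-dimensional scaffold).

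Next, since the balls are placed independently, the vectors $I_1,\dots,I_m$ are mutually independent NA families. Repeated application of (P1) then yields that the full $m\times n$ array $(I_{k,i})_{1\le k\le m,\,1\le i\le n}$ is NA. The urn counts $\xi_i = \sum_{k=1}^m I_{k,i}$ are non-decreasing functions of the variables in the $i$-th column $\{I_{k,i} : 1\le k\le m\}$, and these columns for different $i$ are disjoint subsets of the $mn$ indices. Applying (P2), the family $(\xi_1,\dots,\xi_n)$ is NA, which is precisely the desired covariance inequality for non-decreasing functions on disjoint subsets of coordinates.

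The main ``obstacle'' is really just the verification of the zero-one principle and of the two closure properties (P1), (P2); if a self-contained treatment is desired, I would include a short direct computation for the single-ball case and otherwise cite the standard NA literature (Joag-Dev and Proschan 1983), since everything else in the argument is an automatic application of the NA calculus.
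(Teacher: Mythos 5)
The paper gives no proof of Lemma~\ref{lem4}; it simply states the negative association of the multinomial urn counts as a known fact (it is a classical result of Joag-Dev and Proschan, 1983, where the multinomial distribution is Example~3.2(a)). So there is no paper proof to compare against; I can only assess your argument on its own terms, and it is correct and complete modulo the standard closure properties you invoke.

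Your decomposition is the standard one and it works. The only part worth spelling out is the base case, so let me record it: for a single ball, writing $U = f((I_{k,i})_{i\in A})$ and $V = g((I_{k,i})_{i\in B})$ with $A\cap B=\emptyset$, shift $f$ and $g$ by constants so that $f(0,\dots,0)=g(0,\dots,0)=0$; this does not change the covariance. Monotonicity then gives $U\ge 0$ with $U>0$ only if the ball lands in $A$, and $V\ge 0$ with $V>0$ only if the ball lands in $B$. Since $A$ and $B$ are disjoint, $UV=0$ almost surely, hence
\be{
\Cov(U,V) = \E\{UV\} - \E\{U\}\,\E\{V\} = -\,\E\{U\}\,\E\{V\} \le 0,
}
which is the claimed zero-one principle for a vector with exactly one nonzero coordinate. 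The remaining steps (closure of NA under taking unions of independent NA families, and under non-decreasing functions of disjoint coordinate blocks) are Properties~P7 and~P6 of Joag-Dev and Proschan and are correctly applied to pass from the $m\times n$ array $(I_{k,i})$ to the column sums $\xi_i=\sum_k I_{k,i}$. I would only suggest making the sign convention explicit (the closure property for non-decreasing functions requires that the functions be \emph{all} non-decreasing, or all non-increasing, across the disjoint blocks; your $\xi_i$ are all non-decreasing in their respective columns, so this is satisfied) and adding the Joag-Dev and Proschan citation so that the closure properties have a concrete reference.
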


}

\begin{lma}\label{lem5} Let~$a'$, $a^*$, $b$, $b'$ and~$b^*$ be real numbers. With~$\bar{x}=(x\wedge 1)\vee(- 1)$,
\bea{
	(i) \enskip& \babs{a'\bar{b}'-a^*\bar{b}^*}\\
	&\qquad\leq \abs{a'}(\abs{b'-b^*}\wedge 2)+\abs{a'-a^*}(\abs{b^*}\wedge 1){,}\\
	(ii)\enskip	& \babs{a'(\abs{b}\wedge\abs{b'}\wedge 1)\bone[bb'>0] - a^*(\abs{b}\wedge\abs{b^*}\wedge 1)\bone[bb^*>0] }\\
		& \qquad \leq \abs{a'}(\abs{b'-b^*}\wedge 1) + \abs{a'-a^*}(\abs{b}\wedge\abs{b^*}\wedge 1){,} \\
	(ii)\enskip	& \babs{a'(\abs{b}\wedge\abs{b'}\wedge 1)^2 \bone[bb'>0] - a^*(\abs{b}\wedge\abs{b^*}\wedge 1)^2\bone[bb^*>0] }\\
		& \qquad \leq 2\abs{a'}(\abs{b}\wedge 1)(\abs{b'-b^*}\wedge 1) + \abs{a'-a^*}(\abs{b}\wedge\abs{b'}\wedge 1)^2{.} \\
 }
\end{lma}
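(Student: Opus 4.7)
The plan for all three inequalities is the same. Write $a'x - a^*y = a'(x-y) + (a'-a^*)y$ so that by the triangle inequality $|a'x-a^*y|\le|a'|\cdot|x-y| + |a'-a^*|\cdot|y|$; with $(x,y)$ chosen as the $b'$- and $b^*$-factors in each identity, the task reduces to (a) bounding the endpoint value $|y|$, which is immediate from the definition, and (b) producing a Lipschitz-type bound on $|x-y|$ in terms of a suitably truncated version of $|b'-b^*|$.

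For (i), with $x=\bar b'$ and $y=\bar b^*$: the truncation $u\mapsto \bar u$ is $1$-Lipschitz, so $|\bar b'-\bar b^*|\le|b'-b^*|$; since both values lie in $[-1,1]$, the difference is also at most $2$, giving $|\bar b'-\bar b^*|\le|b'-b^*|\wedge 2$. Combined with $|\bar b^*|\le|b^*|\wedge 1$, this yields the claim.

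For (ii), set $\phi(b,c):=(|b|\wedge|c|\wedge 1)\bone[bc>0]$. Clearly $|\phi(b,c)|\le|b|\wedge|c|\wedge 1$. The key point is that $c\mapsto\phi(b,c)$ is $1$-Lipschitz with values in $[-(|b|\wedge 1),|b|\wedge 1]$: for $b>0$ one has $\phi(b,c)=c\wedge(b\wedge 1)$ when $c>0$ and $\phi(b,c)=0$ when $c\le 0$, which is continuous at $c=0$ and piecewise affine with slopes in $\{0,1\}$; the case $b<0$ is symmetric, and $b=0$ is trivial. Thus $|\phi(b,b')-\phi(b,b^*)|$ is bounded both by $|b'-b^*|$ and by $|b|\wedge 1\le 1$, hence by $|b'-b^*|\wedge 1$, giving (ii).

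For (iii), with $\psi(b,c):=\phi(b,c)^2$, the factorization $p^2-q^2=(p+q)(p-q)$ combined with the bounds from (ii) gives $|\psi(b,b')-\psi(b,b^*)|\le(|\phi(b,b')|+|\phi(b,b^*)|)\cdot|\phi(b,b')-\phi(b,b^*)|\le 2(|b|\wedge 1)(|b'-b^*|\wedge 1)$, while $\psi(b,b^*)\le(|b|\wedge|b^*|\wedge 1)^2$, which matches the second term on the right (taking the natural reading of the stated bound). The only mildly nontrivial step throughout is the Lipschitz verification in (ii), where care is needed at $c=0$ because of the indicator $\bone[bc>0]$; this is tamed by the observation that $|b|\wedge|c|\wedge 1\to 0$ as $c\to 0$, making $\phi(b,\cdot)$ continuous across the origin and eliminating any jump from the indicator.
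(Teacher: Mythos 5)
Your proposal is correct, and its overall skeleton is the same as the paper's: split $a'x-a^*y = a'(x-y) + (a'-a^*)y$, apply the triangle inequality, bound $|y|$ directly, and then bound $|x-y|$. Parts (i) and (iii) are handled essentially verbatim as in the paper (and you correctly flag the $b'$-versus-$b^*$ discrepancy in the stated bound for (iii); the paper's own proof indeed produces $(|b|\wedge|b^*|\wedge 1)^2$ there). The one place you genuinely diverge is the key sub-lemma for (ii), i.e.\ the bound
\be{
\bbabs{(\abs{b}\wedge\abs{b'}\wedge 1)\bone[bb'>0] - (\abs{b}\wedge\abs{b^*}\wedge 1)\bone[bb^*>0]}\le \abs{b'-b^*}\wedge 1.
}
The paper obtains this by brute-force inspection of the eight sign patterns of $(b,b',b^*)$. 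You instead observe that for fixed $b$, the map $c\mapsto(\abs{b}\wedge\abs{c}\wedge 1)\bone[bc>0]$ is a continuous, piecewise-affine function with slopes in $\{0,\pm1\}$ (hence $1$-Lipschitz), taking values in $[0,\abs{b}\wedge 1]$; the only potential discontinuity at $c=0$ is washed out because the factor $\abs{b}\wedge\abs{c}\wedge 1$ vanishes there. This is cleaner and a little more informative: it yields the stronger bound $\abs{b'-b^*}\wedge(\abs{b}\wedge 1)$, from which the paper's version follows. In short, same decomposition and same endgame; a tidier, structural argument in place of the paper's exhaustive case check for the one nontrivial inequality.
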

\begin{proof} 
We have
\bes{
	 \babs{a'\bar{b}'-a^*\bar{b}^*}
	&\leq \babs{a'\bar{b}'-a'\bar{b}^*} + \babs{a'\bar{b}^*-a^*\bar{b}^*}\leq \abs{a'}\babs{\bar{b}'-\bar{b}^*}+\abs{a'-a^*}\abs{\bar{b}^*} \\
	&\leq \abs{a'}(\abs{b'-b^*}\wedge 2)+\abs{a'-a^*}(\abs{b^*}\wedge 1),
}
which proves~$(i)$. Moreover, by considering eight possible combinations of the signs of~$b$, $b'$ and~$b^*$, we can conclude that
\ben{\label{112}
\abs{(\abs{b}\wedge\abs{b'}\wedge 1)\bone[bb'>0] - (\abs{b}\wedge\abs{b^*}\wedge 1)\bone[bb^*>0]}\le |b'-b^*|\wedge 1.
}
This in turn ensures that
\bes{
	& \abs{a'(\abs{b}\wedge\abs{b'}\wedge 1)\bone[bb'>0] - a^*(\abs{b}\wedge\abs{b^*}\wedge 1)\bone[bb^*>0] }\\
& \qquad\leq \abs{a'}\cdot\abs{(\abs{b}\wedge\abs{b'}\wedge 1)\bone[bb'>0] - (\abs{b}\wedge\abs{b^*}\wedge 1)\bone[bb^*>0] }\\
 &\qquad\qquad+\abs{a'-a^*}\cdot{(\abs{b}\wedge\abs{b^*}\wedge 1)\bone[bb^*>0] }\\
& \qquad\leq \abs{a'}(\abs{b'-b^*}\wedge 1)+\abs{a'-a^*}{(\abs{b}\wedge\abs{b^*}\wedge 1)},
}
as claimed in~$(ii)$. Lastly, using~$\abs{x^2 - y^2} = \abs{x+y}\cdot\abs{x-y}$ and~\eq{112} in the second inequality below, we obtain
\bes{
	& \abs{a'(\abs{b}\wedge\abs{b'}\wedge 1)^2\bone[bb'>0] - a^*(\abs{b}\wedge\abs{b^*}\wedge 1)^2\bone[bb^*>0] }\\
& \qquad\leq \abs{a'}\cdot\abs{(\abs{b}\wedge\abs{b'}\wedge 1)^2\bone[bb'>0] - (\abs{b}\wedge\abs{b^*}\wedge 1)^2\bone[bb^*>0] }\\
 &\qquad\qquad+\abs{a'-a^*}\cdot\abs{(\abs{b}\wedge\abs{b^*}\wedge 1)^2\bone[bb^*>0] }\\
& \qquad\leq \abs{a'}\bcls{(\abs{b}\wedge\abs{b'}\wedge 1)+(\abs{b}\wedge\abs{b^*}\wedge 1)}(\abs{b'-b^*}\wedge 1) 
\\
&\qquad\qquad+\abs{a'-a^*}{(\abs{b}\wedge\abs{b^*}\wedge 1)^2}\\
& \qquad\leq 2\abs{a'}(\abs{b}\wedge 1)(\abs{b'-b^*}\wedge 1) +\abs{a'-a^*}{(\abs{b}\wedge\abs{b^*}\wedge 1)^2},
}
which proves~$(iii)$.
\end{proof}


\section*{Acknowledgments}

This research was supported by the ARC Discovery Grants DP150101459 and DP190100613, the Singapore Ministry of Education Academic Research Fund Tier~2 Grant MOE2018-T2-2-076, and Singapore Ministry of Education Academic Research Fund Tier~1 Grants R-146-000-182-112 and R-146-000-230-114. We thank the Institute of Mathematical Sciences, NUS, for supporting the workshop \emph{Workshop on New Directions in Stein's Method} in March~2015, during which part of this research was conducted.

\setlength{\bibsep}{0.5ex}
\def\bibfont{\small}

\end{document}